\numberwithin{equation}{section}
\theoremstyle{plain}
\newtheorem{thm}{Theorem}
\newtheorem{prop}[thm]{Proposition}
\newtheorem{lem}[thm]{Lemma}
\newtheorem{cor}[thm]{Corollary}
\newtheorem{claim}[thm]{Claim}
\theoremstyle{definition}
\newtheorem{defn}[thm]{Definition}
\theoremstyle{remark}
\newtheorem{rem}[thm]{Remark}
\newtheorem*{acknowledgement*}{Acknowledgments}
\def\Z{\mathbb {Z}}
\def\Q{\mathbb {Q}}
\def\R{\mathbb {R}}
\def\C{\mathbb {C}}
\newcommand\Qp{\Q_p}
\def\isom{\simeq}
\def\bs{\backslash}
\def\eqdef{\overset{\text{def}}{=}}
\DeclareMathOperator{\Ad}{Ad}
\DeclareMathOperator{\diag}{diag}
\DeclareMathOperator{\Lie}{Lie}
\DeclareMathOperator{\Res}{Res}
\DeclareMathOperator{\Span}{Span}
\DeclareMathOperator{\Stab}{Stab}
\DeclareMathOperator{\supp}{supp}
\DeclareMathOperator{\vol}{vol}
\newcommand\abs[1]{\left| {#1} \right|}
\newcommand\norm[1]{\left\Vert {#1} \right\Vert}
\newcommand{\xdashrightarrow}[2][]{\ext@arrow 0359\rightarrowfill@@{#1}{#2}}
\DeclareRobustCommand
\p@\hbox{.}\mkern2mu\raise7\p@\hbox{.}\mkern1mu}}
\newcommand{\Cc}{C_{\mathrm{c}}}
\newcommand{\Cic}{\Cc^{\infty}}
\newcommand\SL{\mathrm{SL}}
\newcommand\Sp{\mathrm{Sp}}
\newcommand\gO{\mathrm{O}}
\newcommand\SO{\mathrm{SO}}
\newcommand\SU{\mathrm{SU}}
\newcommand\so{\mathfrak{so}}
\newcommand\lieg{\mathfrak{g}}
\newcommand\lieh{\mathfrak{h}}
\newcommand\liem{\mathfrak{m}}
\newcommand\lien{\mathfrak{n}}
\newcommand\lies{\mathfrak{s}}
\newcommand\liet{\mathfrak{t}}
\newcommand\liez{\mathfrak{z}}
\newcommand\liesr{\lies_{\R}}
\newcommand\liesrs{\liesr^{*}}
\newcommand\lietr{\liet_{\R}}
\newcommand\lietrs{\lietr^{*}}
\newcommand\liezr{\liez_{\R}}
\newcommand\BB{\mathbb{B}}
\newcommand\CC{\mathbb{C}}
\newcommand\GG{\mathbb{G}}
\newcommand\HH{\mathbb{H}}
\newcommand\bLL{\mathbb{L}}
\newcommand\RR{\mathbb{R}}
\newcommand\bSS{\mathbb{S}}
\newcommand\TT{\mathbb{T}}
\newcommand\ZZ{\mathbb{Z}}
\DeclareMathAlphabet{\mathcal}{OMS}{cmsy}{m}{n}
\newcommand\calF{\mathcal{F}}
\newcommand\calH{\mathcal{H}}
\newcommand\calN{\mathcal{N}}
\newcommand\calP{\mathcal{P}}
\newcommand\calR{\mathcal{R}}
\newcommand\calS{\mathcal{S}}
\newcommand\calT{\mathcal{T}}
\newcommand\fraka{\mathfrak{a}}
\newcommand\frakg{\mathfrak{g}}
\newcommand\frakh{\mathfrak{h}}
\newcommand\frakk{\mathfrak{k}}
\newcommand\frakl{\mathfrak{l}}
\newcommand\frakm{\mathfrak{m}}
\newcommand\frakn{\mathfrak{n}}
\newcommand\diff{\mathop{}\!\mathrm{d}}
\newcommand\dcross{\diff^{\kern-2pt\raisebox{1pt}{$\times$}}\kern-8pt}
\newcommand\Adele{\mathbb{A}}
\newcommand\AF{\Adele_F}
\newcommand\Af{\Adele_\mathrm{f}}
\newcommand\GQ{\GG(\Q)}
\newcommand\GF{\GG(F)}
\newcommand\GA{\GG(\Adele)}
\newcommand\GAF{\GG(\AF)}
\newcommand\GAf{\GG(\Af)}
\newcommand\Kf{K_\mathrm{f}}
\newcommand\gf{g_\mathrm{f}}
\newcommand\Qbar{\bar{\Q}}
\newcommand\Fbar{\bar{F}}
\def\cf{cf.\ }
\def\resp{resp.\ }
\newcommand\blfootnote{\xdef\@thefnmark{}\@footnotetext}
\DeclareFontFamily{U}{wncy}{}
\DeclareFontShape{U}{wncy}{m}{n}{<->wncyr10}{}
\DeclareSymbolFont{mcy}{U}{wncy}{m}{n}
\DeclareMathSymbol{\Sha}{\mathord}{mcy}{"58} 
\newcommand\mb{\bar\mu}
\DeclareMathOperator{\denom}{denom}
\begin{document}

\title{Homogeneity of arithmetic quantum limits for hyperbolic $4$-manifolds}

\author{Zvi Shem-Tov}
\address{Einstein Institute of Mathematics, The Hebrew University of Jerusalem,  Israel}
\email{\url{zvi.shem-tov@mail.huji.ac.il}}

\author{Lior Silberman}
\address{Department of Mathematics, University of British Columbia, Vancouver\ \ BC\ ~V6T 1Z2, Canada}
\email{\url{lior@math.ubc.ca}}

\begin{abstract}
We work toward the arithmetic quantum unique ergodicity (AQUE) conjecture for
sequences of Hecke--Maass forms on hyperbolic $4$-manifolds. We show that
limits of such forms can only scar on totally geodesic $3$-submanifolds,
and in fact that all ergodic components of the microlocal lift other than
the uniform measure arise from the uniform measures on these submanifolds.
\end{abstract}

\subjclass[2010]{11F41; 37A45}

\maketitle
\tableofcontents{}

%\fancypagestyle{plain}{}

\section{Introduction}
\subsection{Background}
Let $Y$ be a compact Riemannian manifold, and let $\phi_j\in L^2(Y)$ be
normalized eigenfunctions of the positive Laplace--Beltrami operator $-\Delta$
with eigenvalues $\lambda_j\to\infty$.  Interpreting the
$\phi_j$ as the stationary quantum states of a free particle moving
on $Y$ naturally leads one to be interested in classifying the subsequential
weak-$*$ limits $\mb$ of the measures $\mb_j$ defined by
$$
d\mb_j=\abs{\phi_j(x)}^2d\vol,
$$
where $d\vol$ is the Riemannian volume on $Y$.

The mathematical study of such \emph{semiclassical limits} was initiated
by Shnirelman \cite{Schnirelman:Avg_QUE}.  As an initial step, 
the measures $\mb_j$ can be lifted to the so-called \emph{Wigner distributions}
(also known as \emph{microlocal lifts}) $\mu_j$ on the unit contangent bundle $T^1 Y$
so that any subsequential limit of the lifts is a measure $\mu$ there,
lifting $\mb$ and invariant by the geodesic flow.
Shnirelman then establishes\footnote{The result was announced in
\cite{Schnirelman:Avg_QUE} with the proofs appearing in
\cite{Shnirelman:EigenfunctionsStatistics};
see also \cite{Zelditch:SL2_Lift_QE, CdV:Avg_QUE}} 
that if the geodesic flow is ergodic then every orthonormal basis of $\phi_j$ as above admits a subsequence of density $1$ such that the corresponding sequence of measures $\mb_j$ becomes equidistributed, a phenomenon known as \emph{Quantum Ergodicity}.

After investigating the case of Hecke--Maass forms on the modular surface
Rudnick and Sarnak \cite{RudnickSarnak:Conj_QUE} conjectured a stronger
property they called \emph{Quantum Unique Ergodicity} (or QUE) holds for 
$Y$ of of negative sectional curvature: that 
\emph{every} sequence $\{\mu_j\}$ as above converges weak-$*$ to the
volume measure on $T^1Y$.

While no manifold of dimension at least $2$ is known to satisfy this
conjecture, in the seminal work \cite{Lindenstrauss:SL2_QUE} Lindenstrauss
proved that it holds in the case of compact hyperbolic surfaces
$Y=\Gamma\bs \HH^2$, where $\Gamma\subset\SL_2(\R)$ is a
congruence lattice, and where the $\Delta$-eigenfunctions $\phi_j$ are,
in addition, also eigenfunctions of the \emph{Hecke operators} on $Y$ (this special case is known as \emph{Arithmetic Quantum Unique Ergodicity} or AQUE). See \cref{hecke-operators} for a discussion on Hecke operators.
For the proof Lindenstruass first used a version \cite{Lindenstrauss:HH_QUE}
of the microlocal lift described above which is compatible with the Hecke
operators. The result is a probability measure $\mu$ on the homogeneous space
$X=\Gamma\bs \SL_2(\R)$ invariant under the action of the diagonal subgroup
$A<\SL_2(\R)$ and satisfying a positive entropy condition established
in \cite{LindenstraussBourgain:SL2_Ent} (this relies on the compatibility
of the lift with the Hecke structure).  While general $A$-invariant measures can be wild,
Lindenstrauss proved that under the entropy condition and an additional
recurrence condition the measure $\mu$ must be $\SL_2(\R)$-invariant. For noncompact hyperbolic surfaces
such as the modular surface
Lindenstrauss proves that the limits $\mu$ are \emph{proportional} to the
uniform measure; the proof of full equidistribution was completed by
Soundararajan~\cite{Sound:EscapeOfMass}.

The key ingredient in \cite{Lindenstrauss:SL2_QUE} was a measure classification result for measures invariant under a 1-parameter flow with an additional recurrence property. This is closely related to the study of measures invariant under higher rank diagonalizable actions, in particular the earlier works by Katok-Spatzier \cite{KatokSpatzier:InvariantMeasures} and Einsideler--Katok\cite{EinsiedlerKatok:SLn_Rigid}, with subsequent work including \cite{EKL:SLn_Rigid,EinsiedlerLindenstrauss:GeneralMaxSplit},
as well as a result for higher dimensional rank-$1$ spaces
\cite{EinsiedlerLindenstrauss:LowEntropyMethod} (a result that is important for the purpose of this paper).
Inspired by these measure rigidity results, Silberman--Venkatesh \cite{SilbermanVenkatesh:SQUE_Lift,SilbermanVenkatesh:AQUE_Ent} formulated
the AQUE problem for general locally symmetric spaces $\Gamma\bs G/K$ and
used higher-rank measure classification results to prove AQUE-type
theorems in some higher rank setting
(see also \cite{BrooksLindenstrauss:OneHecke,ShemTov:OnePlaceHighRank}
where the arithmetic assumption is significantly weakened). 

In our recent work \cite{ShemTovSilberman:HH_AQUE_preprint} we returned to
the rank-one setup that was considered initially by Rudnick and Sarnak, 
and proved the AQUE conjecture for hyperbolic $3$-manifolds (more generally,
for the locally symmetric spaces associated to inner forms of the group
$\SL_2$ over algebraic number fields).
In the present paper we continue our study of AQUE in hyperbolic manifolds,
focusing on dimension $4$.
\subsection{Statement of results}\label{statement}
Let $Q$ denote the quadratic form 
\begin{equation}\label{theform}
Q=2x_0x_1+x_2^2+x_3^2+x_4^2,
\end{equation}
and let $\GG=\SO(Q)$ be the corresponding $\Z$-group. 
Let $G=\GG(\R)$ denote the group of real points of $\GG$ and $\Gamma=\GG(\Z)$
its group of $\Z$-points. Then $\Gamma$ is a congruence lattice in $G$, and we let $X$ denote the homogeneous space 
\begin{equation}\label{eq1}
X=\Gamma\bs G,
\end{equation} 
and $dx$ its probability Haar measure. Let $\calH$ be the Hecke algebra
of this space (see \cref{hecke-operators} for the definition). For any ring $R$ we may realize $\GG(R)$ as the group of matrices $g\in\SL_5(R)$
satisfying 
\begin{equation}\label{realization1}
g\begin{pmatrix}J&0\\0&I_3\end{pmatrix}g^T=\begin{pmatrix}J&0\\0&I_3\end{pmatrix},
\end{equation}
where $J=\begin{pmatrix}0&1\\1&0\end{pmatrix}$ and $I_3$ is the $3\times 3$
identity matrix. Under this realization we let
$A = \{\diag(\lambda,\lambda^{-1},1,1,1) \mid \lambda \in \R_{>0}\}$
and let $M\isom \{\pm I_2\}\times\SO(3)$, the compact factor of the
centralizer of $A$.

Finally let $K = \SO(5)\cap G \isom \SO(4)$, a maximal compact subgroup of
$G$ compatible with our choice of $A$, and let $Y = X/K$ be the
resulting locally symmetric space; it is covered by the hyperbolic $4$-space
$S=G/K$.

\begin{thm}\label{mainthm1}
Let $\{\phi_j\}_{j=1}^\infty\subset L^2(Y)$ be normalized joint eigenfunctions
of $\calH$ and of the Laplace--Beltrami operator $\Delta_S$ with Laplace
eigenvalues $\lambda_j \to\infty$.
Let $\mb$ be a subsequential weak-$*$ limit of the measures $d\mb_j$
with density $\abs{\phi_j}^2$ relative to the Riemannian volume on $Y$.
Then $\mb$ is proportional to a countable linear combination of measures,
each of which is either the Riemannian probability measure on $Y$ or the
Riemannian probability measure on a totally geodesic hyperbolic submanifold
of codimension $1$.
\end{thm}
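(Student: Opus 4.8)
The plan is to follow the microlocal-lift / positive-entropy / measure-classification paradigm, in the form used for hyperbolic $3$-manifolds in \cite{ShemTovSilberman:HH_AQUE_preprint}, the key structural point being that although $G=\SO(4,1)$ has real rank one, the group $\GG(\Qp)\cong\SO_5(\Qp)$ has $\Qp$-rank $2$ at good primes, so the Hecke data at $p$ contributes a higher-rank diagonalizable action. First I would fix a prime $p$ at which $Q$ splits, write $K_p=\GG(\Zp)$, let $A_p\subset\GG(\Qp)$ be a maximal split torus (of $\Qp$-rank $2$), and form the $S$-arithmetic quotient $X_p=\Gamma_p\bs\bigl(G\times\GG(\Qp)\bigr)$. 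Using the Hecke-equivariant microlocal lift \cite{Lindenstrauss:HH_QUE, SilbermanVenkatesh:SQUE_Lift} (in the form of \cite{ShemTovSilberman:HH_AQUE_preprint}) and passing to a subsequence, the measures $d\mb_j$ lift to a finite positive measure $\mu$ on $X_p$ that is invariant under $A$ and under $K_p$, is $A_p$-recurrent (this is where the fact that the $\phi_j$ are genuine Hecke eigenfunctions enters), and pushes forward to $\mb$ under $X_p\to X\to Y$. Any loss of mass in the cusps of $X_p$ only lowers the total mass of $\mu$ and does no harm; it is responsible for the word "proportional'' in the statement.

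Next I would establish positive entropy: since the $\phi_j$ are joint Hecke eigenfunctions, the recurrence of $\mu$ can be promoted, by the amplification argument of Bourgain--Lindenstrauss \cite{LindenstraussBourgain:SL2_Ent} in the generality of \cite{SilbermanVenkatesh:AQUE_Ent} and \cite{ShemTovSilberman:HH_AQUE_preprint}, to a quantitative lower bound: for a fixed generator $a$ of $A$, almost every $A$-ergodic component $\mu'$ of $\mu$ satisfies $h_{\mu'}(a)\ge h_0$ for an explicit $h_0>0$. The feature one wants is that $h_0$ strictly exceeds the entropy of the geodesic flow on a totally geodesic hyperbolic surface inside $Y$, while remaining no larger than the entropy on a totally geodesic hyperbolic $3$-manifold (the latter scars do survive). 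With this in hand I would run the measure classification for the diagonalizable action of $A\times A_p$ on $X_p$: the archimedean factor has rank one, so the appropriate tool is the low-entropy method of Einsiedler--Lindenstrauss \cite{EinsiedlerLindenstrauss:LowEntropyMethod} (with the high-rank $p$-adic input as in \cite{EKL:SLn_Rigid}), whose hypotheses are precisely $A$-invariance, $A_p$-recurrence, and the entropy bound just obtained. The output is that $\mu$ is homogeneous: almost every ergodic component is the invariant probability measure on a closed orbit of $\mathbf{H}(\R)\times\mathbf{H}(\Qp)$ for a reductive $\Q$-subgroup $\mathbf{H}\le\GG$ containing the diagonal torus, and, projecting to $X=\Gamma\bs G$, it is the homogeneous measure on a closed orbit $xH$ with $A\le H\le G$ reductive.

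It then remains to enumerate the possibilities for $H$. Up to conjugacy a reductive subgroup of $\SO(4,1)$ containing $A$ has semisimple part $\SO(k,1)$ for some $1\le k\le 4$ (the other rank-one real forms do not embed in $\SO(4,1)$ for signature reasons), namely the stabilizer of a nondegenerate subspace of signature $(k,1)$ spanned over $\Q$ and containing the hyperbolic plane of $Q$, possibly times a compact normal factor. For $k=1$ the identity component is essentially $A$ times a compact group and the associated measures have zero entropy, so this case is excluded by the entropy bound. For $k=2$ the measure would live on a totally geodesic hyperbolic-surface sub-bundle; this is again ruled out by the entropy bound, its entropy being below $h_0$---equivalently, one may restrict such a measure to the totally geodesic $\HH^3$ that contains the surface and invoke AQUE for hyperbolic $3$-manifolds \cite{ShemTovSilberman:HH_AQUE_preprint}, which forbids a quantum limit there from being supported on a proper sub-orbit. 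For $k=4$ one gets the Haar measure on $X$, whose image in $Y$ is the Riemannian measure. The surviving case $k=3$ gives the homogeneous measure on a closed $\SO(3,1)$-orbit; since $\SO(3,1)\cap K=\SO(3)$, this pushes forward to the Riemannian probability measure on a closed totally geodesic hyperbolic $3$-submanifold of $Y$, of codimension one. Finally there are at most countably many closed totally geodesic hyperbolic $3$-submanifolds of $Y$ (each closed $\SO(3,1)$-orbit forces the relevant intersection with a conjugate of $\Gamma$ to be a lattice, pinning $H$ down to a countable family of $\GG(\Q)$-classes); assembling these with the single Haar component exhibits $\mb$ as proportional to a countable linear combination of the measures named in \cref{mainthm1}.

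The main obstacle is the seam between the entropy bound and the classification: one must prove a lower bound $h_0$ that genuinely separates the codimension-one scars (which persist) from the lower-dimensional ones (which do not), and one must verify that the low-entropy method applies to the specific pair $\bigl(\SO(4,1),\,\SO_5(\Qp)\bigr)$ and outputs exactly the reductive subgroups enumerated above, with the correct matching of unstable horospherical directions. If the quantitative entropy bound is not by itself strong enough to eliminate the codimension-two case, the fallback---restricting a putative surface scar to the containing totally geodesic $\HH^3$ and quoting the $3$-manifold theorem---requires carrying the $p$-adic recurrence of $\mu$ through that restriction, i.e.\ identifying a maximal split torus of $\SO_5(\Qp)$ with one in the subgroup $\SO_4(\Qp)$ and checking that recurrence survives. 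This bookkeeping, together with controlling the possible loss of mass at the cusps, is where most of the technical work lies.
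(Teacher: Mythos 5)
Your high-level framework — microlocal lift, positive entropy, measure classification à la Einsiedler--Lindenstrauss, then an enumeration of the possible $\SO(k,1)$ stabilizers — matches the paper's strategy, and your accounting of the $k=4$ (Haar) and $k=3$ (codimension-one submanifold) cases is correct. But there is a genuine gap in the step that matters most, namely ruling out the $k=2$ (codimension-two, hyperbolic-surface) case, and a secondary omission worth flagging.

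Your proposal offers two routes to eliminate $k=2$. The first is a quantitative entropy lower bound $h_0$ that is claimed to strictly exceed the entropy carried by a totally geodesic $\HH^2$. This is not how the paper proceeds, and it is very unlikely to be achievable: the entropy lower bounds one obtains from Hecke amplification (Bourgain--Lindenstrauss, Silberman--Venkatesh, and the paper's \cref{quanmachine}) are a small fraction of the maximal entropy, nowhere near the $\tfrac13$ of the maximum that one would need in $\HH^4$ (the geodesic flow on $T^1\HH^4$ has topological entropy $3$, on $T^1\HH^2$ it is $1$). The paper never claims or needs such a bound — its positive-entropy input into \cref{thm:EL-LowEntropy} is qualitative. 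Your second route, restricting a putative surface scar to an enclosing totally geodesic $\HH^3$-orbit and "invoking AQUE for hyperbolic $3$-manifolds," is circular: the ergodic component of $\mu$ living on an $\SO(1,3)$-orbit is not a weak-$*$ limit of Hecke--Maass forms on a hyperbolic $3$-manifold — it arises from Hecke eigenfunctions for the $\SO_5$ Hecke algebra on $\Gamma\backslash\HH^4$, and the restriction of such a measure to a sub-orbit is not in the domain of the $3$-manifold theorem. The paper instead introduces a new mechanism: it shows (\cref{machine}, \cref{cond3dim4}, and the discussion in \cref{last}) that for $\tilde H\isom\SO(1,2)$ the subvariety $y\tilde H M$ is \emph{small} in the sense that every subvariety of it has a virtually small rational stabilizer (contained in real points of a reductive group satisfying Marshall's smallness criterion), and then a parallel/transverse amplification argument shows $\mu(y\tilde H M)=0$. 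The fact that $\SO(1,3)$ is \emph{not} small in $\SO(1,4)$ is precisely why the codimension-one scars survive in the theorem's conclusion.

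Separately, you do not address hypothesis \ref{cond3} of \cref{thm:EL-LowEntropy} ("few exceptional returns"). In this paper that verification is nontrivial — it is the content of \cref{few-sec} and requires the same smallness/non-concentration machinery applied to orbits $\Gamma g H M$ with $H\isom\SO(1,2)$ — and cannot be skipped. It is also what forces the detailed analysis of stabilizers of subvarieties of $HM$ (\cref{real-stabilizer}, \cref{middle-lem}, \cref{prop:small-stabilizers}). Your proposal, as written, has no mechanism to supply this hypothesis.
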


Let $\{\phi_j\}_{j=1}^\infty$ be a sequence as in \cref{mainthm1}.
By the equivariant microlocal lift of \cite{SilbermanVenkatesh:SQUE_Lift},
after passing to a subsequence, there exist Hecke eigenfunctions
$\tilde{\phi}_j\in L^2(X)$ such that the measures $\mu_j$ with density
$\abs{\phi}^2$ relative to the invariant measure of $X$ converge weak-$*$ to
an $MA$-invariant measure $\mu$ lifting $\mb$.
\cref{mainthm1} then follows from the following result.
\begin{thm}\label{mainthm2}
Let $\{{\phi}_j\}_{j=1}^\infty\subset L^2(X)$ and for each $j$ let
$\mu_j$ be the probability measure on $X$ with density
$\abs{{\phi}_j}^2$ relative to $dx$.  Suppose:
\begin{enumerate}
\item The ${\phi}_j$ are normalized eigenfunctions of $\calH$.
\item The sequence of probability measures $\mu_j$ converges in the weak-$*$
topology to an $A$-invariant measure $\mu$ on $X$.
\end{enumerate}
Then $\mu$ is proportional to a convex combination of homogeneous measures.
Each of them is either the uniform measure $dx$ or supported on an orbit of
a subgroup of $G$ isomorphic to $\SO(1,3)$.  

Each non-uniform component of
$\mu$ is the $\tilde{H}$-invariant probability measure on a subset
$\Gamma ym\tilde{H}$ where $y\in \GG(\R\cap\Qbar)$, $m\in M$, and $\tilde{H}$ is a subgroup of 
$G$ isomorphic to $\SO(1,3)$ such that 
$m\tilde{H}m^{-1}$ is defined over $\R\cap\bar{\Q}$. 
\end{thm}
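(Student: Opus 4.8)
The plan is to follow the Lindenstrauss measure-rigidity paradigm as adapted to rank-one situations in our earlier work \cite{ShemTovSilberman:HH_AQUE_preprint}, combined with the extra work needed to handle the fact that the centralizer of $A$ now contains a nontrivial compact factor $M\isom\{\pm I_2\}\times\SO(3)$ and that the possible scarring submanifolds are of codimension $1$ (three-dimensional hyperbolic) rather than points or closed geodesics. The first main input is the positive-entropy theorem: using the Hecke-eigenfunction hypothesis (1) together with the equivariant microlocal lift, one shows that every ergodic component (with respect to the $A$-action) of the limit measure $\mu$ has positive entropy with respect to the geodesic flow. This is the rank-one analogue of the Bourgain--Lindenstrauss--Silberman--Venkatesh entropy bound; in our setting it should follow by the same amplification argument once one notes that $\GG=\SO(Q)$ is an inner form situation at the relevant places, so the Hecke algebra $\calH$ is large enough. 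The second ingredient is a recurrence/high-entropy measure classification: by \cite{EinsiedlerLindenstrauss:LowEntropyMethod} (the low-entropy method for rank-one real points, applied here to $\SO(1,4)$) an $A$-invariant, $A$-ergodic probability measure on $X=\Gamma\bs G$ with positive entropy and which is ``recurrent'' under a large enough unipotent or the Hecke recurrence must be homogeneous, i.e.\ the invariant measure on a closed orbit $\Gamma g H x$ of some closed subgroup $H\le G$ containing $A$. One then enumerates the intermediate subgroups $H$ with $A\le H\le G=\SO(1,4)$: up to conjugacy these are $A$ itself, $AM$, copies of $\SO(1,2)$, $\SO(1,3)$, and $G$. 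The entropy lower bound rules out $H=A$ and $H=AM$ (too small), and a separate argument — the one special to this paper — must rule out $H$ conjugate to $\SO(1,2)$, leaving only $\SO(1,3)$ and $G$.

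The step I expect to be the main obstacle is exactly this exclusion of the ``too-small'' intermediate homogeneous measures, namely showing that no ergodic component is the invariant measure on a closed $\SO(1,2)$-orbit (nor on an $A$- or $AM$-orbit). In the hyperbolic $3$-manifold paper the analogous obstruction — ruling out closed geodesics — was handled by an arithmetic argument showing that the microlocal lift cannot concentrate too sharply, e.g.\ via a Hecke-recurrence estimate that forces the support of each ergodic component to be ``thick'' in the $M\bs G$-directions transverse to $A$. Here one needs the quantitative version: the positive-entropy bound must be improved to a lower bound on entropy strictly exceeding the entropy of the geodesic flow restricted to a closed $\SO(1,2)$-orbit (which is the entropy of the geodesic flow on a hyperbolic surface, a strict fraction of the full entropy on $\HH^4$). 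Concretely, $h_{\text{top}}$ of the geodesic flow on $T^1\HH^4$ is $3$, on a totally geodesic $\HH^3$ it is $2$, and on a totally geodesic $\HH^2$ it is $1$; so one wants the entropy of every ergodic component to be $> 1$, which should follow from the sharp form of the Hecke entropy bound (the $2/3$-type bound of \cite{LindenstraussBourgain:SL2_Ent} generalized), since $2/3\cdot 3 = 2 > 1$. That leaves $\SO(1,3)$ and $G$ as the only possibilities, matching the statement.

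The final part is the arithmetic identification of the non-uniform components: once an ergodic component is known to be the $\tilde{H}$-invariant measure on a closed orbit $\Gamma y m\tilde H$ with $\tilde H\isom\SO(1,3)$, one must show $\tilde H$ can be taken so that $m\tilde H m^{-1}$ is defined over $\R\cap\Qbar$ and $y\in\GG(\R\cap\Qbar)$. This is a rationality argument: a closed orbit of a reductive subgroup in an arithmetic quotient forces the stabilizer, and hence a conjugate of $\tilde H$, to be an algebraic subgroup defined over $\Q$ (Ratner/Borel--Prasad type), and then the totally geodesic codimension-one submanifolds of $Y$ correspond to $\Q$-subgroups $\SO(Q')\le\SO(Q)$ for rational sub-forms $Q'$ of rank-$1$ type over $\R$; the factor $m\in M$ and the point $y$ absorb the ambiguity of the $\R$-conjugacy class, and algebraicity of $m\tilde Hm^{-1}$ over $\R\cap\Qbar$ follows from the $M$-equivariance built into the microlocal lift together with the fact that $M\subset\GG(\R\cap\Qbar)$. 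I would organize the write-up as: (i) the entropy bound; (ii) invoking the low-entropy measure classification to get homogeneity of each ergodic component; (iii) enumerating intermediate subgroups and using the sharp entropy bound to eliminate all but $\SO(1,3)$ and $G$; (iv) the rationality/arithmeticity argument pinning down $y$ and $m$; (v) assembling the ergodic decomposition of $\mu$ into a countable convex combination.
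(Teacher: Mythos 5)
Your overall framing — verify the hypotheses of the rank-one measure classification of Einsiedler--Lindenstrauss and then eliminate the too-small homogeneous components — is the right paradigm, but the key elimination step is wrong, and this is precisely where the paper's main technical novelty lives.

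You propose to rule out $\SO(1,2)$-orbits by sharpening the positive-entropy bound to an entropy $>1$ (via a $2/3$-type bound). This would not work as written: no such quantitative lower bound on the entropy of ergodic components is established in the paper, and the known Hecke-amplification entropy bounds (Bourgain--Lindenstrauss and their generalizations) give a positive constant that is far from $2/3$ of the maximal entropy. The paper in fact uses a completely different mechanism. There is a third hypothesis in the Einsiedler--Lindenstrauss theorem beyond recurrence and positive entropy, the \emph{few exceptional returns} condition (finiteness of the stabilizer in $M\times\GAf$ for a.e.\ point), and verifying it is where the work is. The paper shows this set of exceptional points sits inside countably many subvarieties of the form $\Gamma g H M$ with $H\isom\SO(2,1)$, and then proves these are $\mu$-null by the new non-concentration machine (\cref{machine}): a Hecke eigenfunction cannot concentrate on a real subvariety whose rational stabilizers are \emph{small} in Marshall's sense (a temperedness condition on the complexified root data, checked in \cref{smallnesslem1} and \cref{middle-lem}), via amplifiers built in \cref{thm:marshall}. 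The same machine, not an entropy bound, is what removes the $\SO(1,2)$-components at the end of \cref{last}. Your proposal never invokes this non-concentration theorem nor the few-returns condition, which are the heart of the argument; and indeed the paper explicitly points out (in the ``Strengthening the result'' discussion) that the $\SO(1,3)$ case is out of reach precisely because those subgroups are \emph{not} small, confirming that smallness, not entropy magnitude, is the operative dichotomy.

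Two smaller points. First, once all three hypotheses of \cref{thm:EL-LowEntropy} are verified, the classification already forces the stabilizer $H$ to contain a finite-index subgroup of a rank-$1$ semisimple algebraic subgroup, so $H=A$ and $H=AM$ are excluded by the conclusion of the theorem itself and need no separate entropy argument. Second, your arithmetic-identification sketch (Borel density for the rational form of the stabilizer, conjugating the split torus into $A$ and absorbing the remaining freedom into $m\in M$) is in the right spirit and close to the paper's \cref{lem:ELproperties}.
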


Abusing the terminology somewhat we will refer to measures $\mu$ as in 
\cref{mainthm2} as \emph{limits of Hecke eigenfunctions}.

\begin{rem} $X=\Gamma\bs\SO(1,4)$ is the (oriented) frame bundle of
$Y=\Gamma\bs\HH^4$ with $X/M = \Gamma\bs G/M$ the unit (co)tangent
bundle.  Thus an $MA$-invariant lift to $X$ is equivalent to a lift
to $T^1Y$ invariant by the geodesic flow.  The natural probability measure on
the unit cotangent bundle of a totally geodesic submanifold thus corresponds
to an $M$-orbit of measures as in the conclusion of \cref{mainthm2}.

There are only countably many such orbits since there are countably many 
totally geodesic submanifolds, equivalently countably many subgroups of $G$
defined over $\R\cap\Qbar$.
\end{rem} 

\begin{rem}\label{rem-general}
We believe that it should be possible to proceed along the lines of
\cite{ShemTovSilberman:HH_AQUE_preprint} and to replace $\Q$ with a totally
real field $F$ and $Q$ with a quadratic form over $F$ which has rank $1$
in at least one infinite place.  One then takes $G=\GG(F_\infty)$,
a product of groups isomorphic to $\SO(5),\SO(1,4),\Sp_4(\R)$, $K<G$
a maximal compact subgroup (thus containing all compact factors),
and a congruence lattice $\Gamma = \GG(F)\cap\Kf$.  The $\phi_j$ are then
automorphic forms which are joint eigenfunctions of the Hecke algebra
and of the Laplace--Beltrami operator at a place where the group has rank $1$,
with the eigenvalues tending to infinity.

The conclusion would be that the ergodic components of any limit measure
which are not Haar measure would be the uniform measures the totally
geodesic submanifolds correponding to the isometry groups of $4$-dimensional
quadratic subspaces of $Q$ which are isotropic at the fixed real place.

In particular when $Q$ is $F_v$-anisotropic in all but one infinite place and
has $F_v$-rank $1$ at that place there is only one noncompact factor in $G$
and $S=G/K$ would still be $\HH^4$.  In that case $\Gamma$ would
be cocompact ($X,Y$ would be compact) and the limit measure $\mu$ would
necessarily be a probability measure.
\end{rem} 

\subsection{Discussion}
\subsubsection{Proof ideas}
There is a significant qualitative difference between the AQUE problem in 4-manifolds and in 3-manifolds, a case that we handled in our previous paper \cite{ShemTovSilberman:HH_AQUE_preprint}. In order to apply the measure rigidity results from \cite{EinsiedlerLindenstrauss:GeneralLowEntropy} one needs to verify a positive entropy condition for quantum limits and a  ``non exceptional returns'' condition. Note that the positive entropy condition is needed for a.e.\ ergodic component of the limiting measure, as opposed to the total ergodic theoretic entropy for which there are general results by Annantharaman and Annantharaman--Nonnenmacher \cite{Anantharaman:QUE_Ent,AnantharamanNonnenmacher:HalfEntropyAnosov}; cf. also the paper \cite{AnantharamanSilberman:HaarComponent} by Annantharaman and the second named author.

To see the difference between dimensions $4$ and $3$ we note that e.g. to show positive entropy one needs to bound the measure of a small measure of a neighbourhood of a compact piece of the centralizer of the one-parameter Cartan subgroup of $\SO(1,d)$. In the case of $\SO(1,3)$, this group is a torus whereas in $\SO(1,4)$ it contains the simple group $M=\SO(3)$ that in terms of its behaviour at the finite places is much bigger. Whether AQUE holds for $d$-manifolds with $d$-large seems to be a completely open question, though this case is certainly covered by the original QUE conjecture of Rudnick and Sarnak.

%This paper is a continuation of our previous work
% where we proved AQUE for hyperbolic
%$3$-manifolds.  There the main novelty was a new method for proving
%non-concentration of the limit measure on certain submanifolds.  In the
%present paper we substantially improve this method, allowing us to apply
%the measure rigidity machinery in further cases.

\medskip

To illustrate our ideas we discuss the proof of the following result, which is used 
to establish the other condition, the "no exceptional returns" condition (which is Condition~\ref{cond3} below).
As we will show in \cref{few-sec}, the key step in verifying the condition is establishing the folowing claim:

\begin{claim} With the notation above, let $L=\Gamma gHM\subset X$, where $g\in G\cap\GG(\bar{\Q})$, $M$ as above and $H$ a normalizer of an anisotropic $\Q$-torus in $M$.
If $\mu$ is a limit of Hecke eigenfunctions then $\mu(L)=0$.
\end{claim}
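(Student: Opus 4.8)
The plan is to argue by contradiction: suppose $\mu(L)=c>0$, and combine the Hecke symmetry of $\mu$ with the arithmeticity of $g$ to produce, for arbitrarily large $k$, a family of translates of $L$ which are pairwise disjoint off a $\mu$-null set and each carry mass comparable to $c$, with the family so large that the total mass exceeds $\mu(X)=1$. First I record the structure of $L$: since $A\subset H$ and $M$ centralises $A$, the set $L=\Gamma gHM$ is right $MA$-invariant, and since $g\in G\cap\GG(\bar\Q)$ the subgroup $\Delta:=(g^{-1}\Gamma g)\cap H$ is an arithmetic lattice in $H\cong\SO(1,3)$ (finite covolume over the number field generated by the entries of $g$, by Borel--Harish-Chandra). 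Hence $\Gamma gH$ is a closed $H$-orbit of finite volume and $L=\bigcup_{m\in M}(\Gamma gH)m$ is its closed $M$-saturation, a subset of codimension $2$ in $X$, in particular Lebesgue-null --- which of course says nothing yet about $\mu(L)$.

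Next comes the Hecke dispersal mechanism. Because the $\phi_{j}$ are joint Hecke eigenforms, the limit $\mu$ is quasi-invariant under the Hecke correspondences at a good prime $p$, in the precise sense used in \cite{ShemTovSilberman:HH_AQUE_preprint} (following \cite{Lindenstrauss:SL2_QUE}): no positive mass of $\mu$ can be carried by a family of sets whose $p$-Hecke translates become pairwise disjoint. Quantitatively, the $k$-fold correspondence carries $L$ to $\sim N^{k}$ translates $L_{w}=\Gamma\alpha_{w}gHM$ indexed by the vertices $w$ at distance $k$ in the Hecke graph at $p$ (here $N$ denotes the local degree), with $\alpha_{w}\in\GG(\Q)$ and $\alpha_{w}g\in\GG(\bar\Q)$; the Hecke eigenvalue identity, together with the polynomial bound on the Hecke eigenvalues (so that no mass is lost along the correspondence), forces the $\mu$-measure of $\bigcup_{w}L_{w}$ to be at least a fixed positive multiple of $c$ for every $k$. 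Thus, once the $L_{w}$ are shown to be pairwise disjoint modulo a $\mu$-null set for all large $k$, one obtains $c\lesssim_{p}N^{-k}\to 0$, the required contradiction.

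The crux is this essential disjointness. Two sets $\Gamma\alpha_{w}gHM$ and $\Gamma\alpha_{w'}gHM$ overlap in positive $\mu$-measure only if the closed $H$-orbit through $\Gamma\alpha_{w}g$ meets a right $(m^{-1}Hm)$-orbit through $\Gamma\alpha_{w'}gm$ for some $m\in M$. If $m=e$ and $w\neq w'$, these are two orbits of the \emph{same} group $H$, hence disjoint unless equal; equality forces $(\alpha_{w}g)^{-1}\gamma(\alpha_{w'}g)$ to normalise $\Delta$ for some $\gamma\in\Gamma$, an equation among matrices over a fixed number field whose $p$-adic size is pinned by $w$ and $w'$, so a height/denominator estimate permits it for only finitely many pairs at each level. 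The genuinely new case is $m\neq e$, where $H$ and its conjugate $H^{m}$ share an orbit: this confines $m$ to a proper algebraic subvariety of $M$, and the residual overlap then lies in a countable union of translates $\Gamma g'S$ with $g'\in\GG(\bar\Q)$ and $S\subsetneq G$ of strictly smaller dimension than $HM$; these are $\mu$-null by the same reasoning applied in descending dimension, ultimately bottoming out at the fact that $\mu$ charges no periodic $A$-orbit (nor any single point), since every $A$-ergodic component of $\mu$ has positive entropy --- the adaptation of \cite{LindenstraussBourgain:SL2_Ent} to the present rank-one setting.

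I expect the main obstacle to be exactly this last step: converting a potential intersection of two $M$-saturated $H$-orbits into a usable arithmetic relation and then defeating it via heights, while controlling the $M$-direction so that the $M$-saturation does not silently reintroduce positive-measure overlaps. This is precisely the point at which the non-concentration method of \cite{ShemTovSilberman:HH_AQUE_preprint} must be sharpened to accommodate the larger intermediate subgroup $HM$ in place of $H$ alone.
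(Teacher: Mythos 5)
The central step of your argument — essential disjointness of the Hecke translates of $L$ — does not hold for this $L$, and the paper flags precisely this issue. When $L\subset gHM$ with $H$ a positive-dimensional simple subgroup, the set $LL^{-1}$ is an \emph{open} subset of $G$, so the condition $s^{-1}s'\in LL^{-1}$ causing an overlap of $sL_\delta$ and $s'L_\delta$ is generic rather than exceptional. Moreover, the number of Hecke points $s$ at level $k$ for which $s^{-1}s'$ lands in (a conjugate of) the rational stabilizer of $L$ grows \emph{polynomially} in $p$ — unlike the torus case from \cite{LindenstraussBourgain:SL2_Ent}, where this count is bounded — so the height/denominator rigidity you invoke cannot produce disjointness for a positive proportion of pairs. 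The mass-dispersal contradiction ($c\lesssim N^{-k}$) therefore never gets off the ground: you would be over-counting the overlaps by a power of $p$. (Separately, in the paper's claim $H$ is the copy of $\SO(1,2)$ arising as the centralizer of a compact torus, not $\SO(1,3)$ as you write; the latter stabilizers are explicitly not small in $\SO(1,4)$ and the paper cannot rule out concentration on those submanifolds.)

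The paper's route is structurally different. Rather than attempting disjointness, it works at the level of the eigenfunction measures $\mu_{\phi_j}$ and amplifies with a positive-definite Hecke operator, splitting the returns $s$ into \emph{parallel} ones ($sL=L$ up to $\Gamma$, i.e.\ $s$ in the rational stabilizer) and \emph{transverse} ones ($sL\cap L$ of lower dimension). Transverse returns are handled by induction on $\dim L$ (Lemma~\ref{basic} and Theorem~\ref{machine}), not by disjointness. Parallel returns \emph{cannot} be ruled out; instead their number, though polynomial in $p$, is beaten by the eigenvalue gain via Marshall's theory of small subgroups (Theorem~\ref{thm:marshall}, Corollary~\ref{cor:weakmarshall}): the key input is that $\Stab_{\GG(\Q)}(L')$ is \emph{virtually small} for every subvariety $L'\subset L$ (Lemma~\ref{prop:small-stabilizers}), which is the replacement for the ``heights pin it down'' step your proposal needs but cannot supply. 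Without that smallness input and the parallel/transverse dichotomy, the argument as sketched would break at exactly the step you identify as the obstacle.
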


Since $L$ can be covered by countably many open bounded subsets, we may
instead consider $L\subset gHM$ which are open and bounded.
Next, it in fact suffices to show that
$\mu(L_\delta)\xrightarrow[\delta\to 0]{} 0$ where $L_\delta\subset X$
is the $\delta$-neighborhood of $L$.  Further, $\mu$ being the weak-$*$ limit
of the $\mu_j$ this amounts to showing that $\mu_j(L_\delta)\to 0$
\emph{uniformly in} $j$. Concretely this reads: For every 
$\epsilon>0$ there exists $\delta_0$ such that for all $\delta<\delta_0$ and for all $j$,
\begin{equation}\label{toshow}
\int_{L_\delta}\abs{\phi_j}^2\le \epsilon. 
\end{equation}
A standard method for obtaining such estimates is "amplification": One
constructs a Hecke operator $\tau$ (the ``amplifier'') so that
$\tau\star\phi_j = \lambda_j(\tau) \phi_j$ with $\lambda_j$ large, and
on the other hand the \emph{translates} $s.L_\delta$ with $s$ in the support
of $\tau$ are ``smeared'' across $X$.  This reduces bounding $\mu_j(L_\delta)$
to bounding sums of the form
\begin{equation}\label{eq:sum-of-measures}
\sum_{s\in S} \mu_j\left(s.L_\delta\right)\,.
\end{equation}

In the simplest cases one tries to show these translates are disjoint, in
which case the sum is at most $\mu(X)=1$, but in general one has to contend
with the intersection pattern of the translates. Replacing $L$ by a small piece of it, we may view it as a subset of $G$, and then intersections
$s.L_\delta\cap s'.L_\delta\neq\emptyset$ occur when
$s^{-1}s' \in L_\delta L_\delta^{-1}$.
In the key situation where
$L$ is contained in (a coset of) a subgroup of $G$, say $H$, that  means that $s^{-1}s'$ must lie 
in a neighbourhood of (a conjugate of) $H$ and it is sometimes possible to
choose $\tau$ avoiding this possibility for many pairs $(s,s')$ in the
support.  Such counts played a central role in the works
\cite{LindenstraussBourgain:SL2_Ent,SilbermanVenkatesh:AQUE_Ent,Marshall:KsmallLinftyBds_preprint},
among others.
However, for our $L\subset gHM$ even $LL^{-1}$ by itself is an \emph{open}
subset of $G$, so intersections of translates are a generic feature 
and there are too many returns to estimate the sum purely by counting them.

In our previous work 
\cite{ShemTovSilberman:HH_AQUE_preprint} we found a way forward. We classified intersections
$s.L_\delta \cap L_\delta$ according to whether $s.L \cap L$ is open in $gHM$
("parallel translate") or whether it is a lower-dimensional subset
("transverse intersection").  Roughly speaking the $s$ causing parallel
translates lie in a rational subgroup and their contribution to
\eqref{eq:sum-of-measures} \emph{can} be estimated by counting them.
On the other hand transverse intersections have small $\mu$-mass by induction
on the dimension of $L$.
In greater detail, note that a transverse intersection $s.L\cap L$ is a
submanifold of $X$, but not in general of the form $g'AB$ for subgroups $A,B$.  
Thus carrying an inductive argument through requires extending the class of
subsets $L$ so that it is closed under transverse intersection, while
ensuring that it is still possible to construct Hecke operators with the
requisite properties to bound the contribution from parallel ones.

A new difficulty arises in $\HH^{4}$ in the construction of the Hecke
operators: the rational subgroups that control parallel intersections
are relatively larger in the ambient group $G$, so it is more difficult to
construct Hecke operators with sufficiently
few returns to such a subgroup.  To study this let $H$ be one of these
subgroups and $\tau= \Gamma a\Gamma$ a Hecke operator with $a\in \GG(\ZZ[1/p])$
(for some prime number $p$).  The number of parallel returns when amplifying
with $\tau$ can be estimated by the size of the intersection 
\begin{equation}\label{eq-size}
(K_p a K_p)\cap (H_pK_p)/K_p,
\end{equation}
where $K_p=\GG(\Z_p)$ and $H_p = H(\Qp)$.  If $H$ is a torus one can show
that, for a fixed $a$, the size of \eqref{eq-size} is a \emph{constant}
independent of $p$, which was ultimately used in the proofs of the positive
entropy condition in \cite{LindenstraussBourgain:SL2_Ent,SilbermanVenkatesh:AQUE_Ent}.
The groups $H$ that arise in our context, however, typically make the size of
\eqref{eq-size} \emph{polynomial} in $p$, and this loss must then be balanced
against the gain from the eigenvalue of $\tau$ acting on $\phi_j$, making
the selection of $\tau$ a delicate problem of harmonic analysis on $G_p/K_p$.
This problem was considered by Marshall \cite{Marshall:KsmallLinftyBds_preprint}
in great generality. 
Marshall gives a condition for $H$ being \emph{small} in $G$
so that for each $\phi_j$ there exist Hecke operators with eigenvalues large
enough relative to the number of returns. He shows that for many reductive
groups the maximal compact subgroup $K<G$ is small in this sense, thus
establishing non-trivial upper bounds for the \emph{sup-norm} of Hecke--Maass
forms on congruence quotients of the symmetric spaces $G/K$ by
amplification techniques.

We now combine Marshall's techniques and our methods.
Specifically we prove in that each of the stabilizers $H$ arising
in the inductive argument described above is contained in the set of real
points of a small subgroup of $G$, giving a sufficient bound on the contribution
of parallel returns.
 
\subsubsection{Strengthening the result}
Ideally we would like to establish arithmetic quantum unique ergodicity,
that is that the uniform measure is the only weak-$*$ limit.
There are two issues that prevent us from doing so. 

First, since every rational quadratic form in five variables over is isotropic,
the results stated above alway concern limits on noncompact quotients
$X$ where the total mass of $\mu$ could be less than one.  We will address
this problem of \emph{escape of mass} in future work. 
 
Second, the conclusion in \cref{mainthm1} was limited by the fact
that totally geodesic $3$-submanifolds in a hyperbolic $4$-manifolds are
"too large": the corresponding subgroups isomorphic to
$\SO(1,3)$ are not small in $\SO(1,4)$.  It seems that a new idea
would be required to rule out concentration on those submanifolds.
Further, this difficulty (unlike escape of mass) cannot be avoided by
restricting the class of lattices: \emph{every} arithmetic hyperbolic
$4$-manifold contains embedded hyperbolic $3$-manifolds.
%Avoid citation unless asked to include
%\cite[\S6.4]{Morris:IntroArithmeticGroups}

\subsection{Plan of the paper}\label{plan}
We set out some background and notations in \cref{sec:notation}.
In particular we identify the space $X=\Gamma\bs G$ with the adelic quotient
$\GQ\bs \GA/\Kf = \GQ\bs G\times \GAf/\Kf$.
We then proceed to verify hypotheses of the rank-$1$ measure classification
theorem of Einsiedler and Lindenstrauss \cite{EinsiedlerLindenstrauss:LowEntropyMethod}, and conclude by ruling out most
possible ergodic components to obtain our main theorems.  For the convenience
of the reader we restate the result here using the adelic point of view
(in the original it is written for $S$-arithmetic groups).

\begin{thm}[{\cite[Thm.\ 1.5]{EinsiedlerLindenstrauss:LowEntropyMethod}}]
\label{thm:EL-LowEntropy}
Let $\mu$ be an $A$-invariant probability measure on $X$. Suppose:
 \begin{enumerate}[(I)]
 \item\label{cond1}\emph{(Recurrence).}
    $\mu$ is $\GAf$-recurrent. That is, for every
    set $U\subset X$ of positive measure and a.e.\ $x\in U$ the set 
    $\{\gf\in \GAf\mid x\gf\in U\}$ is unbounded (has non-compact closure).
 \item\label{cond2}\emph{(Positive Entropy).}
    Almost every $A$-ergodic component of $\mu$ has positive ergodic-theoretic
    entropy with respect to some non-trivial $a\in A$.
 \item\label{cond3}\emph{(Few exceptional returns).}
    For $\mu$-a.e. $x\in X$ the group
      $$
      \{h\in M\times\GAf\mid xh=x\}
      $$
    is finite.
  \end{enumerate}
Then $\mu$ is a convex combination of homogenous measures.  Each such measure
is supported on a closed orbit of a closed subgroup subgroup $H<G$ containing
both the diagonal group $A$ and a finite-index subgroup $L$ of a semisimple
algebraic $\R$-subgroup $L'<G$ of real rank~$1$.  Moreover, the group $H$ is a
finite-index subgroup of an algebraic $\R$-subgroup of $G$.
\end{thm}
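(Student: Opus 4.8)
This theorem is \cite[Thm.~1.5]{EinsiedlerLindenstrauss:LowEntropyMethod} transported to the adelic picture, so the plan is to deduce it from the cited result once the hypotheses and conclusion are matched across the standard dictionary: strong approximation for $\GG$ identifies $X=\GQ\bs\GA/\Kf$ with the $S$-arithmetic quotients $\GG(\Z[1/S])\bs\GG(\R\times\prod_{p\in S}\Qp)$ for finite sets of places $S$, under which ``$\GAf$-recurrence'' becomes recurrence under the factor $\prod_{p\in S}\GG(\Qp)$ for every $S$ --- the form of the recurrence hypothesis used in Einsiedler--Lindenstrauss; the entropy and stabiliser hypotheses and the description of the homogeneous pieces transport verbatim. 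For completeness I sketch the mechanism behind that theorem. First I would disintegrate $\mu$ into its $A$-ergodic components: recurrence is a pointwise (conservativity) property, positive entropy of almost every component is exactly hypothesis (II), and finiteness of the central stabiliser $\{h\in M\times\GAf: xh=x\}$ is a measurable condition, so hypotheses (I)--(III) pass to almost every component and I may assume $\mu$ is $A$-ergodic. Fixing $a\in A$ with $h_\mu(a)>0$ and letting $G^+,G^-$ be its expanding and contracting horospherical subgroups, the Ledrappier--Young / Margulis--Tomanov entropy formalism turns $h_\mu(a)>0$ into the statement that the leafwise (conditional) measures $\mu^{G^+}_x$ along $G^+$-orbits are a.s.\ non-trivial, in fact non-trivial along a subgroup of $G^+$ normalised by $A$.

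The core is the low-entropy shearing argument, whose point is to promote ``non-trivial leafwise measures'' to ``$\mu$ is invariant under a non-trivial unipotent subgroup'' without the help of a second commuting diagonalisable flow. Following Lindenstrauss's $\SL_2$ argument \cite{Lindenstrauss:SL2_QUE} and its generalisation in Einsiedler--Lindenstrauss, I would argue by contradiction: if $\Stab_G(\mu)$ were no larger than $A$, then combining a maximal/effective ergodic theorem with $\GAf$-recurrence produces, on a set of positive measure, pairs of nearby generic points $x'=xh$ with $h$ small and $x,x'$ lying essentially on a common $A$-orbit; since $a$ expands the $G^+$-component and contracts the $G^-$-component at definite rates while leaving the central $M\times\GAf$-directions bounded, flowing by powers of $a$ magnifies the $G^+$-part of $h$, and a compactness argument in the space of leafwise measures forces $\mu^{G^+}_x$ to be invariant under translation by the limiting direction. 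Here hypothesis (III) is precisely what excludes the degenerate alternative in which the recurrence is produced by a compact group of symmetries rather than by genuinely distinct nearby orbits, so the displacement one extracts is non-trivial; the output is a non-trivial one-parameter unipotent $U^+\le G^+$ leaving $\mu$ invariant.

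Running the same argument with $a^{-1}$ in place of $a$ (legitimate since $h_\mu(a^{-1})=h_\mu(a)>0$) yields a non-trivial unipotent $U^-\le G^-$ leaving $\mu$ invariant. Then $H:=\Stab_G(\mu)$ is a closed subgroup containing $A$, $U^+$ and $U^-$, and the Zariski closure $L'$ of $\langle U^+,U^-\rangle$ is a semisimple $\R$-subgroup of real rank $1$ --- a copy of $\SL_2(\R)$ or $\PSL_2(\R)$, or after conjugation a copy of $\SO(1,3)$ inside $G=\SO(1,4)$ --- so $H$ contains a finite-index subgroup $L$ of $L'$; the Borel density theorem, together with hypothesis (III) to control the discrepancy between $H$ and its Zariski closure, then shows $H$ is finite-index in an algebraic $\R$-subgroup of $G$. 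Finally I would deduce homogeneity from the unipotent invariance: since $\mu$ is invariant under the one-parameter unipotent $U^+$, Ratner's measure classification theorem (with the $S$-arithmetic extensions of Ratner and of Margulis--Tomanov) writes $\mu$ as a countable convex combination of homogeneous measures, each supported on a closed orbit $x\Lambda$ of a closed subgroup $\Lambda\ge U^+$; feeding back the invariance under $A$ and $L$ forces $\Lambda\supseteq H$, and a Borel density argument on $x\Lambda$ identifies $\Lambda$ as a finite-index subgroup of an algebraic $\R$-subgroup of $G$ containing $A$ and a finite-index subgroup of the real-rank-one semisimple $L'$, which is the asserted conclusion.

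The step I expect to be the genuine obstacle is the shearing argument of the second paragraph: rigorously passing from $\GAf$-recurrence to usable pairs of nearby orbits, magnifying the displacement by the diagonal flow, and extracting in the limit the translation-invariance of the leafwise measures requires the full strength of the maximal/effective ergodic theorems of Einsiedler--Lindenstrauss and a careful accounting of which horospherical directions are produced --- this is where hypothesis (III) must be used quantitatively, not merely to exclude a clean degeneracy. By comparison the remaining ingredients --- ergodic decomposition, the entropy $\Leftrightarrow$ non-trivial-conditionals dictionary, generation of a rank-one semisimple group from a pair of opposite unipotents, Borel density, and the appeal to Ratner --- are routine once that core is available.
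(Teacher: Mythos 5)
The paper offers no proof of this statement: it is quoted directly from Einsiedler--Lindenstrauss (Thm.\ 1.5 of the cited work), the only content being the restatement of the $S$-arithmetic formulation in adelic language, which is exactly the dictionary you invoke in your first paragraph. Your further sketch of the low-entropy shearing mechanism is a reasonable summary of the cited proof but is not needed; your approach is essentially the same as the paper's.
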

 
In \cref{sec:smallness} we prove a general non concentration result for Hecke
eigenfunctions. 
We prove that any weak-$*$ limit $\mu$ as in \cref{mainthm2} satisfies the three conditions above in \cref{rec-section}, \cref{entropy-sec} and \cref{few-sec}. Then in \cref{last} we analyze the possible measures that may appear as ergodic components of $\mu$ and conclude the proof of Theorem \ref{mainthm2}.

\begin{acknowledgement*} The authors are grateful to Manfred Einsiedler and Elon Lindenstrauss for many fruitful discussions, and to Saugata Basu and
Elina Robeva for useful suggestions during an earlier phase of this paper.
LS was supported by an NSERC discovery grant.
This material is based upon work supported by the National Science Foundation under Grant No. DMS-1926686. 
ZS was partially supported by the ERC grant HomDyn no. 833423.
\end{acknowledgement*}

\section{Conventions and notation}\label{sec:notation}
\subsection{Fields}
Let $\bar{\Q}$ denote the algebraic closure of $\Q$ in $\C$. By a \emph{real number field} we mean a finite extension of $\Q$ contained in $\bar{\Q}\cap \R$. 

\subsection{Algebraic groups}
Fix a commutative ring $k$ with unit.  In this paper by a $k$-\emph{group}, or an \emph{algebraic group over} $k$, we shall mean an affine group scheme
over $k$.  If $\GG$ is a $k$-group, and $k'$ is a $k$-algebra, 
we let $\GG_{k'}$ denote the group obtained by 
base change from $k$ to $k'$, that is the $k'$-group represented by
restricting the functor $\GG$ to $k'$-algebras.
Abusing notation we sometimes refer to $\GG$ itself as a $k'$-group if this
would not cause confusions, possibly writing $\GG/k'$.
Conversely, if $\GG$ is a $k'$-group we let $\Res_{k'/k}\GG$ denote the
$k$-group obtained by Weil restriction of scalars, determined by
$\Res_{k'/k}\GG(R)=\GG(R\otimes_k k')$ for each $k$-algebra $R$.   
Finally, when $k$ is a field, two $k$-groups $\GG,\GG'$ are $k$-\emph{forms}
of each other if they become isomorphic to $\GG$ over a field extension $L/k$ --
in other words if the $L$-groups $\GG_L$ and $\GG'_{L}$ are isomorphic. 

When $\GG$ is an algebraic group over a local field $k$, e.g. $k=\R$, its group of $k$-points $G=\GG(k)$ has
a natural structure of a Lie group over $k$. In this case we shall say that a subset of $G$ is \emph{closed} if it is closed with respect to the analytic 
topology on $G$, that is the topology induced by the topology of $k$. We call a subgroup (or more generally, a subset) $H$ of $G$ \emph{algebraic} if it is the set of $k$-points of a $k$-subgroup of $\GG$. In most of the cases we consider the group $\GG$ is actually defined over a number field $F\subset k$; that is, $\GG$ is obtained from an $F$-group $\tilde{\GG}$, say, by extension of scalars. In this case we shall say that an algebraic subgroup $H$ of $G$ is \emph{defined over $F$} if $H$ is the $k$-points of an $F$-subgroup $\HH$ of $\tilde{\GG}$. 

\subsection{Hecke operators}\label{hecke-operators}
We retain the notation of the introduction. In particular, $\GG=\SO(Q)$ is the affine group scheme over $\Z$ corresponding to the quadratic form 
$Q=2x_0x_1+x_2^2+x_3^2+x_4^2$, $G=\GG(\R)$ its group of real points and $\Gamma=\GG(\Z)$ the group of $\Z$-points, which is a congruence lattice in $G$ when the latter is viewed as a real Lie group. We recall two different ways for defining Hecke operators acting on functions $X=\Gamma\bs G$. We start with the classical approach. For this, let $a$ be an element in the commensurator ${\rm{Comm_G(\Gamma)}}$ of $\Gamma$ in $G$. Let $S\in G$ be a set of representatives for $\Gamma a\Gamma$ in $X$. Then $S$ is finite and we define
\begin{equation}\label{basic-hecke}
T_af(x)=\sum_{s\in S}f(sx), 
\end{equation}
for any function $f$ on $X$ and $x\in X$. We call any operator of the form \eqref{basic-hecke} a \emph{basic} Hecke operator. 
The basic Hecke operators generate an algebra of operators over $\C$ and every element of this algebra is called a \emph{Hecke operator}. 
One can check that any Hecke operator has the form 
\begin{equation}\label{general-hecke}
\tau f(x)=\sum_{s\in {\rm{Comm_G(\Gamma)}}}\tau(s)f(sx), 
\end{equation}
where $s\mapsto \tau(s)$ is a finitely supported, $\Gamma$-invariant, function on $\Gamma\bs{\rm{Comm_G(\Gamma)}}$. Identifying this function with $\tau$ itself, we call the support of $s\mapsto \tau(s)$ the \emph{support} of $\tau$. We shall occasionally view the support of $\tau$ as a subset of $X$.   

In the sequel we shall not use the algebra of \emph{all} the Hecke operators
as defined above, but a large commutative subalgebra of it. To describe it
we adopt an adelic point of view.  Let $\Af$ denote the ring of finite adeles and $\GAf$ the group of $\Af$
points. Then the map $x\mapsto (x,1)$ induces an isomorphism  
\begin{equation}\label{adelic-appendix}
X\isom \GG(\Q)\bs G\times \GAf /\Kf, 
\end{equation}
where $\Kf$ is an open compact subgroup of $\GAf$ (Here we rely on the fact that our form $Q$ has class number $1$). 
We define Hecke operators through \eqref{adelic-appendix} as follows. 
If $\tau$ is a compactly supported, $\Kf$ bi-invariant, integrable function on $\GAf$ we
let it act on a function $f$ on $X$ by 
\begin{equation}\label{adeles2}
\tau\star f(x)=\int_{\GAf}f(xs)\tau(s)ds.
\end{equation}
There is $p_0$ such that for all $p>p_0$ the group $\Kf$ contains a
hyperspecial maximal compact subgroup $K_p$ of $G_p$, in which case
the convolution algebra $\Cic(K_p \bs G_p / K_p)$ is commutative.
Accordingly let $\mathcal{H}$ denote the algebra generated by those operators,
that is the algebra operators $\tau\star$ where $\tau\in \Cic(K_p \bs G_p / K_p)$ is
supported on $\prod'_{p>p_0} G_p$. We call $\mathcal{H}$ the \emph{Hecke algebra of $X$}. 

We will still sometimes think of such $\tau$ as $\Gamma$-invariant functions
on $\GQ$ (as we may, due to \eqref{adelic-appendix}). We can extend such functions to $G$ (or $X$)
by assigning them the value zero off $\GQ$. From this point of view $\tau$ acts as a classical Hecke operator; its support (originally a subset of $\GAf$, or $\GAf/\Kf$) can also be seen as a finite
subset of $\Gamma\bs \GQ$.

\section{Non concentration of Hecke eigenfunctions on manifolds with small stabilizers}\label{sec:smallness}

In this section we obtain a general non concentration result for Hecke
eigenfunctions, the key technical result underlying the rest of the paper.
Informally we rule out the possibility of a Hecke eigenfunction on
$X$ concentrating near a "sufficiently small" submanifold of $X$.

In \Cref{sec:defsmall} we recall Marshall's notion of a small \emph{subgroup}
and extract the core of his paper, the existence of the Hecke operators that
rule out concentration along the subgroup.  In \Cref{sec:nonconcentration}
we use these Hecke operators to rule out concentration along
\emph{submanifolds} whose stabilizers are small subgroups.

\subsection{Small subgroups and Marshall's Theorem}\label{sec:defsmall}

Initially let $\GG$ be a reductive group defined over an algebraically closed
field, and let $\HH\subset \GG$ be a reductive subgroup.
Let $\bSS \subset \TT$ be maximal tori in $\HH\subset \GG$ respectively;
write $\Phi(\GG:\TT)$ for the associated root system in
$\lietrs = X^*(\TT)\otimes_\Z \R$ with Weyl group $W_\GG$ and let
$\lietr = X_*(\TT)\otimes_\Z \R$.
% be the corresponding apartment (for a complex
%group this is also the Lie algebra of the real Cartan subgroup).
Choosing a notion of positivity let
$\rho_\GG = \frac12\sum_{\alpha\in\Phi^+(\bSS:\TT)} \alpha$ be the half sum of
the positive roots. We similarly have $\Phi(\HH:\bSS)$, $W_\HH$, $\liesr$,
$\liesrs$ and $\rho_\HH$; we choose the notions of positivity compatibly
for convenience.

For $X\in \lietr$ set
\begin{equation}\label{normdef}
\norm{X}_\GG = \max_{w\in W_\GG} \langle\rho_\GG,w.X\rangle\,;
\end{equation}
this defines a norm on $\lietr/\liezr$ where
$\liezr = X_*(Z(\GG))\otimes_\Z \R$ is the common nullspace of the roots.

\begin{defn}\label{defn:small}Let $\HH\subset \GG$ be as above.
\begin{enumerate}
\item We say $\HH$ is \emph{tempered in $\GG$} if for all $X\in\liesr$
we have
\begin{equation}\label{temp}
\norm{X}_\HH \leq \frac12 \norm{X}_\GG\,.
\end{equation}
\item We say $\HH$ is \emph{small in $\GG$} if it is tempered and, furthermore,
either
   \begin{enumerate}
   \item $\dim \bSS < \dim \TT$; or
   \item $\bSS = \TT$ and there exists $X\in\liesr=\lietr$ such that for all $w\in W_\GG$
   we have 
   \begin{equation}
   \norm{w.X}_\HH < \frac12 \norm{X}_\GG\,.
   \end{equation}
   \end{enumerate} 
\item If $\HH'$ is a non reductive subgroup of $\GG$ we say that $\HH'$ is
      \emph{tempered} (\resp \emph{small}) in $\GG$ if it is contained in a
      reductive subgroup which is tempered (\resp small) in $\GG$. 
\item If $F$ is any field and $\HH'\subset\GG$ are $F$-groups we say $\HH'$ is
      \emph{tempered} (\resp \emph{small}) in $\GG$ if the same is true for
      $\HH'_{\Fbar} \subset \GG_{\Fbar}$.
\end{enumerate} 
\end{defn}

\begin{rem} The first condition is due to Benoist--Kobayashi
\cite{BenoistKobayashi:TemperedHomogenous1} who showed that this condition
is equivalent to the representation of $\GG(\C)$ on
$L^2\left(\GG(\C)/\HH(\C)\right)$
being tempered (appropriately phrased their results also apply to real groups).
The second (and stronger) notion is due to Marshall
\cite{Marshall:KsmallLinftyBds_preprint},
who uses the term "weakly small" for it (and "quasi-small" for what we call
"tempered").
\end{rem}

Now let $\GG$ be a reductive $\Q$-group and let $X = \GQ\bs\GA/\Kf$.
Fix a left-invariant Riemannian metric on $G = \GG(\R)$ and for $L\subset G$
write $L_\delta$ for the $\delta$-neighbourhood of $L$ in $G$ with respect
to the fixed metric.

For a subset $S\subset G$ write $\pi(S)$ for its image in $X$ under the natural projection $\pi$.  For a Hecke operator $\tau\in\Cic(\Kf\bs\GAf/\Kf)$ now write
$$\norm{\tau}_{L^1(S)} =
\sum_{\substack{s\in\supp(\tau)\\ \GQ s\Kf \in \pi(S)}} \abs{\tau(s)}\,.$$

We extract the following result from \cite{Marshall:KsmallLinftyBds_preprint}\footnote{Page references are to the arXiv version of May 2014.}.

\begin{thm}\label{thm:marshall}
With the notation above, let $E\subset \R$ be a finite extension of $\Q$ and let
$\HH\subset \GG_E$ be a small reductive subgroup, $H = \HH(\R)\subset G$
its group of real points.  Then there exists a constant $h>0$ such
that for any compact subset $\Omega \subset G$ and bounded open subset
$V \subset H$ there exists $C>0$ such that:

For every $\delta>0$ and $x\in \Omega$ there exists a set $J$ of
positive definite Hecke operators $\tau\in\Cic(\Kf\bs\GAf/\Kf)$ such that the union of the supports of all $\tau\in J$ is a finite set, and such that for every Hecke-eigenfunction
$\phi\in L^2(X)$ with unit norm there exists $\tau\in J$ so that
$\tau\star \phi = \lambda \phi$ with $\lambda >\frac12$ and such that

\begin{equation}\label{eq1-entropy}
\frac{1}{\lambda}\norm{\tau}_{L^1(xV_\delta x^{-1})}\le C \delta^h. 
\end{equation} 
\end{thm}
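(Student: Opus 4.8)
The plan is to assemble \Cref{thm:marshall} out of the amplification machinery of \cite{Marshall:KsmallLinftyBds_preprint}, making explicit two points that appear there only implicitly: that his estimates are locally uniform in the base point, so that they may be taken uniform over a compact set $\Omega$, and that the scale parameter of his amplifier can be tied to $\delta$ so as to produce the power $\delta^{h}$. As a preliminary step I would replace $\HH\subset\GG_E$ by its Weil restriction of scalars to $\Q$; this preserves smallness by \Cref{defn:small}(4), so we may assume $\GG,\HH$ are $\Q$-groups. Fix $p_0$ as in \cref{hecke-operators}, so that for $p>p_0$ the local Hecke algebra at $p$ is contained in $\calH$, fix a constant $c=c(\HH,\GG)>0$ to be chosen last, and for a given $\delta$ let $\calP=\calP(\delta)$ be the set of primes in $[P,2P]$ with $P=\max(\delta^{-c},P_0)$ for a suitable fixed $P_0$.

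For $J$ I would take the finite family of windowed amplifiers over $\calP$. Recall that at each $p\in\calP$ there is a fixed finite list $\sigma_{p,1},\dots,\sigma_{p,r}$ of $K_p$-bi-invariant operators, each a normalized characteristic function of a double coset $K_p a_{p,i}K_p$ with the $a_{p,i}$ representing a fixed finite set of small dominant cocharacters of $\GG$ (the list being the same for all $p$ up to substituting $p$ for the uniformizer, and in particular independent of $\phi$). The classical input is the amplification inequality: for every Hecke eigenfunction $\phi$ there are roots of unity $(c_p)_{p\in\calP}$ and indices $(i_p)_{p\in\calP}$ such that the positive definite operator $\tau_\phi=\bigl(\sum_{p\in\calP}c_p\sigma_{p,i_p}\bigr)^{*}\star\bigl(\sum_{p\in\calP}c_p\sigma_{p,i_p}\bigr)$ acts on $\phi$ by a scalar $\lambda$ bounded below by a positive power of $P$, hence in particular $\lambda>\tfrac12$; for the groups at hand this follows either from the $\{T_p,T_{p^2}\}$-type dichotomy or from a Rankin--Selberg mean-value bound over the window together with the Kim--Sarnak bound on Satake parameters. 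Let $J$ be the set of all $\tau_\phi$ obtained as $(c_p)$ and $(i_p)$ range over their finitely many values. Then $J$ is finite and independent of $\phi$, each of its elements is positive definite, the union of their supports lies in $\bigcup_{p,p'\in\calP}\bigcup_{i,i'}K_p a_{p,i}K_p\cdot(K_{p'}a_{p',i'}K_{p'})^{-1}$ and so is finite, and for each $\phi$ we take $\tau=\tau_\phi\in J$.

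It remains to show $\norm{\tau}_{L^1(xV_\delta x^{-1})}\le C\delta^{h}\lambda$ for every $\tau\in J$, uniformly for $x\in\Omega$. Expanding $\tau$, its support is a union of double cosets inside $\GG(\Z[1/pp'])$ with $p,p'\in\calP$; the identity coset occurs with coefficient $\asymp\#\calP$, and it contributes to $\norm{\tau}_{L^1(xV_\delta x^{-1})}$ only when $1\in\Gamma\cdot xV_\delta x^{-1}$, which is harmless after dividing by $\lambda$ since $\lambda$ is a large power of $P$. For the remaining cosets, proper discontinuity of $\Gamma$ reduces $\norm{\tau}_{L^1(xV_\delta x^{-1})}$ to a weighted count of points of $\GG(\Z[1/pp'])$ lying in a fixed compact region (depending on $\Omega$ and $V$) and within distance $\delta$ of the real submanifold $xVx^{-1}\subset xHx^{-1}$. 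Here I would invoke Marshall's counting lemma for intersections of the type \eqref{eq-size}, in a $\delta$-thickened form: because $\HH$ is \emph{small} in $\GG$ in the sense of \Cref{defn:small}, the $P$-exponent of this count is smaller, by a fixed $\eta>0$, than the exponent governing the degrees of the $\sigma_{p,i}$, while the requirement of lying within $\delta$ of $xHx^{-1}$ (rather than exactly on it) contributes an additional positive power of $\delta$ once $P$ is at most a fixed power of $\delta^{-1}$. Dividing by $\lambda$ and summing over the window yields a bound of the form $P^{-\eta'}\cdot(\text{power of }\delta)$; choosing $c$ large converts this to $C\delta^{h}$ with $h=h(\HH,\GG)>0$, and tracking the implied constants — which are locally uniform in the base point and in $V$ — gives the stated dependence of $C$ on $\Omega$, $V$ and the pair $\HH\subset\GG_E$.

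The main obstacle is precisely this last geometric estimate: counting $\GG(\Z[1/M])$-points near a conjugate $xHx^{-1}$ of a small subgroup, for $x$ ranging over a compact set with no rationality assumed, and extracting from \Cref{defn:small} both the power saving in $P$ and the power of $\delta$. This is the technical heart of \cite{Marshall:KsmallLinftyBds_preprint}, refined as in \cite{ShemTovSilberman:HH_AQUE_preprint}; once it is in hand, what remains is the bookkeeping of making the bounds locally uniform in the geometric data and compatible with the positive-definiteness and eigenvalue normalizations demanded by the statement.
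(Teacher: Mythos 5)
Your proposal follows essentially the same route as the paper's proof: both defer the heavy lifting to Marshall's amplifier construction and counting estimates from \cite{Marshall:KsmallLinftyBds_preprint}, and assemble the statement by combining the $L^1$-bound on the positive-definite operator $\tau=\calT\calT^*$ with the lower bound on its eigenvalue. The paper's proof is much terser — it simply cites specific displays from Marshall's paper for the bounds $\norm{\tau}_{L^1(xV_\delta x^{-1})}\le N^{2-\eta}$ and $\lambda\gg_\epsilon N^{2-2\epsilon}$ with $N\approx\delta^{-h}$, then divides — whereas you reconstruct the windowed amplifier and acknowledge that the geometric counting is the input you cannot reprove. That is an accurate accounting of the division of labor.

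Two small inaccuracies worth noting. First, the preliminary Weil-restriction step is unnecessary and, as written, not quite coherent: the group $\GG$ is fixed over $\Q$ (it defines $X$, $\Gamma$, and the Hecke algebra), so one cannot "replace $\GG$ and $\HH$" by their restrictions to $\Q$ without changing the ambient arithmetic object. Marshall's result is stated directly for a small subgroup defined over a real number field $E$ inside a $\Q$-group $\GG$, and moreover $x\in\Omega$ carries no rationality assumption, so the relevant conjugate $xHx^{-1}$ need not be defined over any number field at all; the counting argument must (and does) handle this directly. Second, the mechanism by which $\delta$ enters is slightly misdescribed: the $\delta$-thickening does not contribute an \emph{extra} power of $\delta$ on top of a $P$-power saving; rather, it caps the admissible amplifier length $N\approx\delta^{-h}$, and the power of $\delta$ in the final bound $\ll N^{-\eta/2}\approx\delta^{h\eta/2}$ arises solely from combining this cap with the $N$-power saving. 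Neither point affects the validity of the overall approach.
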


\begin{proof}
In \cite[\S3]{Marshall:KsmallLinftyBds_preprint} it is shown that there exists
a Hecke operator $\calT$ such that $\tau=\calT\calT^*$ has
$\norm{\tau}_{L^1(xV_\delta x^{-1})} \leq N^{2-\eta}$ where
$N\approx\delta^{-h}$ for an absolute constant $h>0$
(see the last equation in page 17) and an absolute constant $\eta>0$.
The height of each element in the
support of the operator $\calT$ constructed there is bounded above in
terms of $\delta$ (independently of $\phi$),
so that $\calT$ is taken from a set $J$ as above.
The operator $\calT$ constructed there is a sum of
$\gg_\epsilon N^{1-\epsilon}$ operators $\tau_\nu$ with
$\tau_\nu\phi=\phi$ (see the sentence before equation (16) on page 18),
so that $\calT\calT^*\phi=\lambda\phi$ with $\lambda\gg_\epsilon N^{2-2\epsilon}$. Thus we have 
$$
\norm{\tau}_{L^1(xV_\delta x^{-1})}\ll N^{-\frac{1}{2}\eta}\approx\delta^{\frac{1}{2}h\eta}, 
$$ 
as needed. 
\end{proof}

For convenience we record the following weaker form of \cref{thm:marshall}. 
\begin{cor}\label{cor:weakmarshall}
Let $\HH\subset \GG_E$ and $V\subset H$ be as in \Cref{thm:marshall}. 
Then for every $\epsilon>0$ there exists a a set $J$ of
positive definite Hecke operators such that the union of the supports of the elements in $J$ is a finite set, 
and such that for every Hecke-eigenfunction
$\phi\in L^2(X)$ with unit norm there exists $\tau\in J$ so that
$\tau\star \phi = \lambda \phi$ with $\lambda >\frac12$ and such that

\begin{equation}
\frac{1}{\lambda}\norm{\tau}_{L^1(V)}\le \epsilon. 
\end{equation} 
\end{cor}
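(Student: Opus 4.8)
The plan is to obtain \cref{cor:weakmarshall} as an immediate specialization of \cref{thm:marshall}, with no new ideas required. First I would apply \cref{thm:marshall} to the compact set $\Omega=\{e\}$ and to the given bounded open subset $V\subset H$; this yields an absolute exponent $h>0$ and a constant $C=C(\Omega,V)>0$ with the stated property. Given $\epsilon>0$, I would then fix a single $\delta=\delta(\epsilon)>0$ small enough that $C\delta^h\le\epsilon$, and take the base point $x=e\in\Omega$. Applied with these choices, \cref{thm:marshall} produces a set $J$ of positive definite Hecke operators whose supports have finite union, such that every unit-norm Hecke eigenfunction $\phi\in L^2(X)$ satisfies $\tau\star\phi=\lambda\phi$ for some $\tau\in J$ with $\lambda>\tfrac12$ and $\tfrac1\lambda\norm{\tau}_{L^1(eV_\delta e^{-1})}\le C\delta^h$.

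It then remains only to compare $\norm{\tau}_{L^1(V)}$ with $\norm{\tau}_{L^1(V_\delta)}$. Since $eV_\delta e^{-1}=V_\delta\supseteq V$ and, by definition, $\norm{\tau}_{L^1(S)}=\sum_{s\in\supp(\tau),\ \GQ s\Kf\in\pi(S)}\abs{\tau(s)}$ is monotone nondecreasing in $S$, we get $\norm{\tau}_{L^1(V)}\le\norm{\tau}_{L^1(V_\delta)}$, hence $\tfrac1\lambda\norm{\tau}_{L^1(V)}\le C\delta^h\le\epsilon$; this is the assertion of the corollary with the family $J=J(\epsilon)$. The only point worth double-checking is bookkeeping of the quantifiers: in \cref{thm:marshall} the family $J$ is allowed to depend on both $\delta$ and $x$, and here both have been pinned down as functions of $\epsilon$ (respectively as the constant $e$), so there is no circularity, while the positive-definiteness, finiteness-of-support, and eigenvalue lower bound $\lambda>\tfrac12$ are inherited verbatim. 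I do not anticipate any genuine obstacle: the content lies entirely in \cref{thm:marshall} (hence in Marshall's work), and this corollary is merely the qualitative shadow of it that will be convenient to cite in \cref{sec:nonconcentration}.
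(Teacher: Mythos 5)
Your proof is correct and is exactly the intended specialization; the paper in fact gives no proof of \cref{cor:weakmarshall} at all (it merely says ``we record the following weaker form of \cref{thm:marshall}''), so your write-out of taking $\Omega=\{e\}$, $x=e$, choosing $\delta$ with $C\delta^h\le\epsilon$, and using monotonicity $\norm{\tau}_{L^1(V)}\le\norm{\tau}_{L^1(V_\delta)}$ is precisely the expected argument.
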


\subsection{A sufficient condition for non-concentration on subvarieties}\label{sec:nonconcentration}
Let $\mu$ be a limit of Hecke eigenfunctions (see the remark after \cref{mainthm2}).  To avoid cumbersome notation
we will write $\mu(V)$ instead of $\mu(\pi(V))$ for subsets $V\subset G$.
When $V$ is smaller than the local radius of injectivity this has an
intuitive meaning; when examining $\mu(V)$ for larger sets we will be claiming
that $\mu(V)=0$ by writing $V$ as a countable union of smaller pieces, so 
again the meaning is clear.

Our main result in this section is \cref{machine}, which gives a sufficient
condition for a real subvariety $L$ of $G$ to be $\mu$-null.

\begin{defn}\label{def:absmall}
Call a abstract subgroup $S\subset G$ \emph{small} if there exists
a number field $\Q\subset E\subset \R$ and a small $E$-subgroup $\HH\subset G$
such that $S \subset \HH(\R)$.  Call $S$ \emph{virtually small} if it has a 
finite-index small subgroup.
\end{defn}

\begin{defn} Call a real subvariety $L\subset \GQ$ \emph{small} if for each
real subvariety $L'\subset L$ the rational stabilizer
$$\Stab_{\GQ}(L') \eqdef \{ s\in \GQ \mid sL'=L'\}$$
is virtually small.
\end{defn}

\begin{thm}\label{machine} Let $\mu$ be a limit of Hecke eigenfunctions. Then for every small real subvariety $L$ of $G$, $\mu(L)=0$.
\end{thm}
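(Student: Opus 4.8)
The plan is to run an induction on $\dim L$, reducing $\mu(L)=0$ to a non-concentration estimate supplied by \cref{thm:marshall}. First I would observe that it suffices to prove the statement for an \emph{irreducible} real subvariety $L$, and moreover — since $L$ and all its subvarieties may be covered by countably many bounded pieces, and since $\mu$ is a countably additive measure — it suffices to fix a single point $x_0$ in the smooth locus of $L$, a small bounded open neighbourhood $L_0 \subset L$ of $x_0$ in the analytic topology, and show $\mu(L_0)=0$; in fact, by the weak-$*$ convergence $\mu_j\to\mu$, it is enough to show that for every $\epsilon>0$ there is $\delta_0>0$ so that $\mu_j((L_0)_\delta)\le\epsilon$ for all $\delta<\delta_0$ and all $j$, where $(L_0)_\delta$ is the $\delta$-neighbourhood in $G$. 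The base case $\dim L = 0$ is trivial (a point has an open, hence finite-index, rational stabilizer only if... — actually a single rational point is handled directly: its $\delta$-neighbourhood has $\mu$-mass tending to $0$ by the same amplification with the small subgroup $\{e\}$, or one reduces to the inductive machinery directly).

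The heart of the argument is the amplification step. Fix $x_0\in L_0$ and let $H = \Stab_{\GQ}(L)$; by the hypothesis that $L$ is a small real subvariety, $H$ is virtually small, so passing to a finite-index subgroup we may assume $H\subset \HH(\R)$ for a small $E$-subgroup $\HH\subset\GG$. The key geometric point is that the ``parallel'' translates of $L_0$ — those $s.L_0$ with $s$ in the support of an amplifier $\tau$ for which $s.L_0\cap L_0$ is again open in $L$ — are exactly the ones with $s$ in (a bounded piece of) a conjugate of $H$, i.e. $s$ lying in a set of the form $x_0 V_\delta x_0^{-1}$ for $V$ a bounded open subset of $H$. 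Applying \cref{thm:marshall} to $\HH\subset\GG_E$ with this $V$, for each $j$ we get an amplifier $\tau = \tau_j$ with $\tau\star\phi_j = \lambda\phi_j$, $\lambda > \tfrac12$, and $\tfrac1\lambda\norm{\tau}_{L^1(x_0 V_\delta x_0^{-1})}\le C\delta^h$. Expanding $\mu_j((L_0)_\delta) = \int_{(L_0)_\delta}|\phi_j|^2$ and using $\tau\star\phi_j=\lambda\phi_j$ together with positive-definiteness of $\tau$ (so the ``diagonal'' and cross terms can be compared), one bounds $\lambda\,\mu_j((L_0)_\delta)$ by a sum of the form $\sum_{s\in\supp\tau}|\tau(s)|\,\mu_j(s.(L_0)_\delta \cap (L_0)_\delta)$, and one splits this sum into parallel and transverse contributions.

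For the \emph{parallel} terms, $\mu_j(s.(L_0)_\delta\cap(L_0)_\delta)\le 1$ and the number of such $s$, weighted by $|\tau(s)|$, is precisely $\norm{\tau}_{L^1(x_0V_\delta x_0^{-1})}\le \lambda C\delta^h$, giving a contribution $\le C\delta^h$ after dividing by $\lambda$. For the \emph{transverse} terms, $s.L\cap L$ is a proper subvariety $L'\subsetneq L$, hence of strictly smaller dimension, and — this is where one must check the inductive hypothesis applies — $L'$ is again a small real subvariety (its subvarieties are subvarieties of $L$, whose rational stabilizers are virtually small by assumption on $L$); so by induction $\mu(L') = 0$, and one deduces $\mu_j$ of a small neighbourhood of the relevant transverse intersections is small uniformly in $j$, for $\delta$ small. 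Summing, $\mu_j((L_0)_\delta) \le C\delta^h + o_\delta(1)$ uniformly in $j$, which gives the claim. The main obstacle, and the step requiring the most care, is the transverse analysis: one must show that the union over all $s$ in the (finite, $j$-independent) support of the amplifiers of the transverse intersections $s.L\cap L$ is contained in a \emph{fixed} finite union of proper small subvarieties, so that the inductive vanishing $\mu(L')=0$ can be converted, via weak-$*$ convergence and a covering-by-neighbourhoods argument, into a uniform-in-$j$ bound $\mu_j(\cdot)\le\epsilon$; this relies on the support of $J$ being a fixed finite set (as guaranteed by \cref{thm:marshall}) and on a Noetherian/boundedness argument for the finitely many resulting intersection varieties. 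A secondary technical point is verifying that the ``parallel versus transverse'' dichotomy for translates of $L$ inside $G$ matches cleanly with membership of $s$ in a conjugate of $H=\Stab_{\GQ}(L)$, which uses that $L$ sits inside $\GQ$ and that $s.L$, $L$ are irreducible of the same dimension when the intersection is open.
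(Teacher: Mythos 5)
Your proposal follows the same line as the paper's proof: induction on $\dim L$, reduction to a bounded open piece $U\subset L$, amplification and the spectral inequality from positive-definiteness, separation of the support of the amplifier into parallel and transverse elements, control of the parallel count via Marshall's theorem for a small subgroup containing $\Stab_\GQ(L)$, and control of the transverse contribution by the inductive hypothesis (using that the class of small subvarieties is stable under passage to subvarieties). This is essentially \cref{basic} plus the closing argument in the paper.

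Three points of the write-up are imprecise and worth correcting. First, the spectral lower bound is $\lambda\,\mu_\phi(V)^2\le\langle\tau\star\phi_V,\phi_V\rangle$, not $\lambda\,\mu_\phi(V)$: one projects $\phi_V$ along $\phi$ and uses positivity of $\tau$ on the orthogonal complement. Correspondingly, the conclusion reads $\mu_\phi(U_{\delta'})\ll\sqrt{\epsilon}$ rather than $\le C\delta^h$; this is a cosmetic but real difference in exponents. Second, the conjugation $x_0 V_\delta x_0^{-1}$ is misplaced. By \cref{stablem} the parallel elements are exactly $\Gamma\,\Stab_\GQ(L)$, so after moving by $\Gamma$ the representatives entering $U_{\delta'}U_{\delta'}^{-1}$ lie in a small neighbourhood of the identity in $\Stab_\GQ(L)\subset H$ itself; no conjugation by a base point of $L$ enters this counting problem. (Conjugation by $x$ is the interface of \cref{thm:marshall} because of the sup-norm application, but here one applies it with $x$ near the identity.) Third, and most substantively, you correctly identify that one needs the transverse intersections $s.L\cap L$ to range over a \emph{fixed} finite family, but you attribute this fixedness to \cref{thm:marshall}. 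In that theorem the set $J$ of amplifiers depends on $\delta$, so its support (and hence the family $\calN$) is not fixed as $\delta\to 0$. The paper avoids this by invoking \cref{cor:weakmarshall}: for a fixed $\epsilon$ one fixes the target $V=U_rU_r^{-1}\cap H$ (a neighbourhood of the identity small enough that $\Stab_\GQ(L)\cap V$ lies in the small finite-index subgroup) and obtains a \emph{fixed} finite set $J$; the scale $\delta'$ is then allowed to shrink afterward, which only improves the parallel count since $U_{\delta'}U_{\delta'}^{-1}\subset V$. Your argument can be repaired by first freezing the Marshall scale and only then letting the neighbourhood scale shrink, but the corollary is the cleaner formulation.
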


Theorem \ref{machine} asserts that for each $g\in L$
there is a neighbourhood $g\in U \subset L$ such that for all $\epsilon>0$
there is $\delta_0>0$ so that if $\delta<\delta_0$ and $\phi$ is any
normalized Hecke eigenfunction then
\begin{equation}\label{uniform}
\mu_\phi(U_\delta) < \epsilon\,.
\end{equation}

In the case where $L$ is a \emph{subgroup} of $G$ one can do better and
obtain polynomial decay for $\mu_\phi(U_\delta)$ as a function of $\delta$.
This stronger statement is used to establish the positive entropy
condition \eqref{cond2}. 
\begin{thm}\label{quanmachine}
Let $\bLL\subset\GG_E$ be a small $\R\cap\Qbar$-subgroup
of $\GG$ and let $U\subset L=\bLL(\RR)$ be an open bounded neighborhood
of the identity.  Then there exists $h>0$ such that for every compact
subset $\Omega\subset G$ there exists $C>0$ such that:

For every $x\in\Omega$ and for every normalized Hecke eigenfunction
$\phi\in L^2(X)$
\begin{equation}\label{quanuniform}
\mu_\phi(xU_\delta)\le C\delta^h.
\end{equation} 
\end{thm}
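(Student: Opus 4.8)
The plan is to combine the Hecke operators furnished by \cref{thm:marshall} with a standard amplification argument, exploiting the hypothesis that $U$ lies in a \emph{subgroup}. First I would arrange that the ambient small group is reductive: if $\bLL$ is not reductive, replace it by a reductive small subgroup $\HH\supseteq\bLL$ (this exists by the definition of ``small'' for non-reductive groups, and is defined over a real number field since $\HH$ is of finite type), and observe $U\subset\bLL(\R)\subset\HH(\R)=:H$. Since $U$ lies in the subgroup $H$ we have $UU^{-1}\subset H$, so we may fix a bounded open subset $V\subset H$ containing the closure of $UU^{-1}$. Apply \cref{thm:marshall} with this $\HH$, the given compact $\Omega$, and this $V$; this produces the exponent $h>0$ (depending only on $\bLL\subset\GG$ and the metric), and for each $\Omega$ a constant $C$ together with the finite-support family $J$ of positive Hecke operators.

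Now fix $x_0\in\Omega$, small $\delta>0$, and a normalized Hecke eigenfunction $\phi$; put $B=x_0U_\delta\subset G$ and write $\mu_\phi(B)$ for $\mu_\phi(\pi(B))$ as in the paper's convention. Let $\tau\in J$ be the positive Hecke operator that \cref{thm:marshall} attaches to $x_0$ and $\phi$, so $\tau\star\phi=\lambda\phi$ with $\lambda>\tfrac12$. Since $\tau$ is positive and self-adjoint, $(f_1,f_2)\mapsto\langle\tau\star f_1,f_2\rangle$ is a positive semidefinite Hermitian form on $L^2(X)$, and the Cauchy--Schwarz inequality for this form applied to $\phi$ and $\mathbf{1}_B\phi$ gives
\[
\lambda^2\mu_\phi(B)^2=\bigl|\langle\tau\star\phi,\mathbf{1}_B\phi\rangle\bigr|^2\le\langle\tau\star\phi,\phi\rangle\cdot\langle\tau\star(\mathbf{1}_B\phi),\mathbf{1}_B\phi\rangle=\lambda\cdot\langle\tau\star(\mathbf{1}_B\phi),\mathbf{1}_B\phi\rangle.
\]
Expanding the last inner product over $\supp\tau$ (via the standard dictionary identifying a Hecke operator with a $\Gamma$-invariant function on the commensurator), the summand attached to $s$ vanishes unless the translate $sB$ meets $B$, and each non-vanishing summand is at most $|\tau(s)|\,\mu_\phi(B)$ by Cauchy--Schwarz together with the invariance of Haar measure. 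Hence $\langle\tau\star(\mathbf{1}_B\phi),\mathbf{1}_B\phi\rangle\le\norm{\tau}_{L^1(R_\delta)}\,\mu_\phi(B)$ with $R_\delta=\{\,s:sB\cap B\neq\emptyset\,\}$, and combining with the displayed inequality (and discarding the trivial case $\mu_\phi(B)=0$) we get $\mu_\phi(B)\le\tfrac1\lambda\norm{\tau}_{L^1(R_\delta)}$.

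It remains to locate the returns. If $sB\cap B\neq\emptyset$ then $s\in x_0(U_\delta U_\delta^{-1})x_0^{-1}$; since group multiplication and inversion are Lipschitz on bounded sets and $x_0$ ranges over the compact set $\Omega$, there is a constant $C_0=C_0(\Omega,U)$ such that $U_\delta U_\delta^{-1}\subset V_{C_0\delta}$ for all small $\delta$, hence $R_\delta\subset x_0 V_{C_0\delta}x_0^{-1}$. This is the one place the subgroup hypothesis is used: because $UU^{-1}\subset H$, the returns remain in a conjugate of $H$ rather than filling out an open subset of $G$. Consequently $\norm{\tau}_{L^1(R_\delta)}\le\norm{\tau}_{L^1(x_0 V_{C_0\delta}x_0^{-1})}$, and applying \cref{thm:marshall} with $\delta$ replaced by $C_0\delta$ bounds $\tfrac1\lambda\norm{\tau}_{L^1(x_0V_{C_0\delta}x_0^{-1})}$ by $C(C_0\delta)^h$. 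Thus $\mu_\phi(x_0U_\delta)\le C'\delta^h$ for all small $\delta$, with $C'=CC_0^h$; enlarging $C'$ (using $\mu_\phi\le1$) handles the remaining $\delta$, proving \cref{quanmachine}.

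Given \cref{thm:marshall}, this deduction is short, and the genuinely new content is just the observation that for a subgroup the return set $R_\delta$ localizes near a conjugate of $H$ — which is exactly what \cref{thm:marshall} is built to estimate, and which fails for a general submanifold $L$ (there $LL^{-1}$ is open in $G$, as explained in the introduction, so one can only obtain the non-quantitative \cref{machine}). I expect the main obstacle to lie not in this deduction but upstream, in \cref{thm:marshall} itself and in showing (elsewhere in \cref{sec:smallness}) that the relevant stabilizers are small subgroups. Points needing routine care here are: matching the left/right action conventions so that ``returns'' really are the set $x_0V_\delta x_0^{-1}$ appearing in \cref{thm:marshall}; verifying that Marshall's operators can be taken inside the Hecke algebra $\mathcal{H}$ of $X$ (i.e.\ supported at primes $p>p_0$); and the measure-theoretic bookkeeping for $\langle\tau\star(\mathbf{1}_B\phi),\mathbf{1}_B\phi\rangle$, most transparently done by lifting $\phi$ to $\GQ\bs\GA$ where right translation preserves the measure.
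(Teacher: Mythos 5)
Your proposal is correct and follows essentially the same route as the paper: the spectral lower bound $\lambda\mu_\phi(V)^2\le\langle\tau\star\phi_V,\phi_V\rangle$ (the paper obtains it by decomposing $\phi_V$ along $\phi$ and using positivity of $\tau$ on the orthogonal complement, which is equivalent to your Cauchy--Schwarz for the positive Hermitian form), the geometric upper bound $\langle\tau\star\phi_V,\phi_V\rangle\le\norm{\tau}_{L^1(VV^{-1})}\mu_\phi(V)$, and the observation that for $V=xU_\delta$ with $U$ inside a subgroup the return set is contained in $x(UU^{-1})_{O(\delta)}x^{-1}$, to which \cref{thm:marshall} applies. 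Your explicit reduction to a reductive small ambient subgroup is a small but worthwhile bit of extra care that the paper's proof elides.
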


\cref{quanmachine} follows quickly from \cref{thm:marshall}, and we establish
it first.  For any $\phi\in L^2(X)$ and a measurable subset
$N\subset G$, we put $\phi_{N}\eqdef \phi\cdot 1_{\pi(N)}$.

\begin{proof} [Proof of Theorem \ref{quanmachine}]
Let $\phi$ be a normalized Hecke eigenfunction and $\tau$ a positive-definite
Hecke operator with $\phi$-eigenvalue $\lambda>0$.
For any $V\subset G$ the projection of $\phi_V$ along $\phi$ is
$\left<\phi_V,\phi\right>\phi=\norm{\phi_V}_2^2\phi$,
so we have $\phi_V=\norm{\phi_V}_2^2\phi+R$, where $R$ is orthogonal to $\phi$.
It follows that
\begin{equation}\label{specbound1}
\left<\tau\star \phi_V,\phi_V\right>=\lambda\norm{\phi_V}_2^4+\left<\tau.R,R\right>\ge\lambda\norm{\phi_V}_2^4\,,
\end{equation}
since $\tau$ is positive.
On the other hand we have  
\begin{align}\label{geom1}
\left<\tau\star\phi_V,\phi_V\right>
&= \int\sum_{s\in\supp(\tau)}\tau(s)\phi_V(gs)\bar{\phi}_V(g)dg \\ \nonumber
&\le \sum_{\substack{s\in\supp(\tau) \\ s\in VV^{-1}}}\abs{\tau(s)}\norm{\phi_V}_2^2.
\end{align}
Combining \eqref{specbound1} and \eqref{geom1} we obtain 
\begin{equation}\label{verybasic}
\mu_\phi(V) \leq \frac{1}{\lambda}\norm{\tau}_{L^1(VV^{-1})}.
\end{equation}
Now let $V= xU_\delta$. Then
$$VV^{-1} = xU_\delta U_\delta^{-1} x^{-1}
\subset x\left(UU^{-1}_{O(\delta)}\right)x^{-1},$$
where the implicit constant depends on $\Omega$ (here we use its compactness).
Since $L$ is a subgroup, $UU^{-1}$ is itself a bounded open neighbourhood
in $L$ and the result follows by \cref{thm:marshall}. 
\end{proof}

Proving Theorem \ref{machine} is more delicate since when $L$ is a general
real subvariety, we can have $UU^{-1}$ open in $G$ and hence not "small".
In the argument above we bounded 
$$\int\phi_V(gs)\overline{\phi_V}(g)dg$$
in \eqref{geom1} by $\norm{\phi_V}^2$ using Cauchy--Schwartz, but in fact
this integral is supported on $s.V\cap V$, so we might hope to make a further
gain when these intersections are small.

\begin{defn}
We say that $s\in\GQ$ is \emph{transverse} to an irreducible subvariety
$L\subset G$ if $\dim(\gamma sL \cap L)<\dim L$ for all $\gamma\in\Gamma$.
In the alternative there exists $\gamma \in \Gamma$ such that $\gamma sL = L$
in which case we say that $s$ is \emph{parallel} to $L$.
\end{defn}

Continuing with the irreducible subvariety $L$ let $V\subset L$ be open and
bounded (in the analytic topology).  Then $V$ is Zariski-dense in $L$ and
hence determines it and we can talk of $s\in \GQ$ being parallel or transverse
to $V$.  
The following result follows immediately from the definitions. 
\begin{lem}\label{stablem}
Let $V$ be an open bounded subset of an irreducible subvariety $L$ of $G$.
Then the set of elements that are parallel with respect to $V$ is exactly
$\Gamma S$ where $S\subset \GQ$ is the stabilizer
\begin{equation}\label{stab}
\Stab_{\GQ} (L) = \{s\in \GQ\mid sL=L \}.
\end{equation}
\qed
\end{lem}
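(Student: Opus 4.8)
The plan is to unwind the two definitions involved; the statement is essentially formal once one records that $\Gamma$ is a group and that $V$ pins down $L$. First I would note that, being open and bounded in the analytic topology, $V$ is Zariski-dense in the irreducible variety $L$, and hence determines it. Consequently the phrases ``parallel with respect to $V$'' and ``parallel with respect to $L$'' mean exactly the same thing: an element $s\in\GQ$ is parallel precisely when $\gamma s L = L$ for some $\gamma\in\Gamma$.

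Next I would translate this membership condition into a statement about cosets. The equation $\gamma s L = L$ says precisely that $\gamma s$ lies in $\Stab_{\GQ}(L) = S$ — note $\gamma s\in\GQ$ since $\Gamma\subset\GQ$ — so $s$ is parallel to $V$ if and only if $\gamma s\in S$ for some $\gamma\in\Gamma$, that is, if and only if $s\in\Gamma^{-1}S$. Since $\Gamma$ is a group, $\Gamma^{-1}S=\Gamma S$, which is exactly the claimed description. Conversely every $s = \gamma^{-1}\sigma$ with $\gamma\in\Gamma$, $\sigma\in S$ satisfies $\gamma s L=\sigma L=L$, so the two sets coincide.

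There is no genuine obstacle here; the only point requiring a word of care is the implicit background assertion that ``parallel'' and ``transverse'' are true alternatives for an irreducible $L$. I would justify this by remarking that if $\dim(\gamma s L\cap L)=\dim L$, then $\gamma s L$ being irreducible of dimension $\dim L$ and $L$ being closed, the Zariski closure of $\gamma s L\cap L$ inside $\gamma s L$ must be all of $\gamma s L$, whence $\gamma s L\subseteq L$ and therefore $\gamma s L = L$ by comparing dimensions. This observation is not needed for the equality in \cref{stablem} itself, but it makes the surrounding dichotomy honest and can be inserted as a parenthetical remark.
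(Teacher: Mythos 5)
Your proof is correct and matches the paper's (implicit) argument: the paper gives no proof at all, regarding the lemma as an immediate consequence of the definitions, and your unwinding — $s$ parallel iff $\gamma s\in\Stab_{\GQ}(L)$ for some $\gamma\in\Gamma$, hence iff $s\in\Gamma^{-1}S=\Gamma S$ — is precisely that. Your parenthetical justifying the parallel/transverse dichotomy (full-dimensional intersection of irreducibles forces equality) is a reasonable and honest addition, though, as you note, not needed for the statement being proved.
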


We now refine the geometric/spectral argument of \cref{quanmachine}
to take account of the distinction between parallel and transverse returns.
\begin{lem}\label{basic}
Let $L$ be a submanifold of $G$ and $U=U_0\subset L$ a compact subset with nonempty interior.
Assume that $U$ is contained in a fundamental domain $\calF$ for
$\Gamma\bs G$.  

Let $\tau$ be a positive definite Hecke operator, $S\subset\GQ$ a set of representatives
for $\supp(\tau)$, which we divide into parallel and transverse elements as $S=\calP\sqcup \calT$.
Then there exists a finite collection $\calN$ of submanifolds of $U_0$ such that each
$N\in\calN$ is of the form $N=U_0\cap b U_0$ ($b\in\GQ$) and has dimension strictly lower 
than $\dim L$, and such that:
For every $\delta>0$ there exists $\delta'>0$ such that for any eigenfunction $\phi$ of $\tau$
with eigenvalue $\lambda>0$, we have 
\begin{equation}
\mu_\phi(U_{\delta'})\le
 \frac{1}{\lambda}\left(\norm{\tau}_{L^1(\calP\cap U_{\delta'}U_{\delta'}^{-1})}
+\frac{\norm{\tau}_\infty}{\mu_\phi(U_{\delta'})}\sum_{N\in\calN}\mu_\phi(N_\delta)\right).
\end{equation}
\end{lem}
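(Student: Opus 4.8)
The plan is to re-run the spectral-plus-geometric argument behind \cref{quanmachine}, but to split the Hecke sum according to whether a representative $s\in S$ is parallel or transverse to $L$. On the parallel part I keep the lossless Cauchy--Schwarz estimate of that proof; on the transverse part I gain from the fact that the relevant overlaps collapse into neighbourhoods of lower-dimensional subvarieties.

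First I would fix a normalized eigenfunction $\phi$ of $\tau$ with eigenvalue $\lambda>0$ and, for a measurable $V\subset X$ to be taken as $U_{\delta'}$ later, repeat the opening of \cref{quanmachine}: writing $\phi_V=\norm{\phi_V}_2^2\phi+R$ with $R\perp\phi$, positivity of $\tau$ gives $\langle\tau\star\phi_V,\phi_V\rangle\ge\lambda\norm{\phi_V}_2^4=\lambda\,\mu_\phi(V)^2$, as in \eqref{specbound1}. Expanding the left-hand side as $\sum_{s\in S}\tau(s)\int_X\phi_V(gs)\overline{\phi_V}(g)\,dg$ and writing $S=\calP\sqcup\calT$, I bound the parallel part exactly as in the passage from \eqref{geom1} to \eqref{verybasic}: Cauchy--Schwarz gives $\big|\int_X\phi_V(gs)\overline{\phi_V}(g)\,dg\big|\le\norm{\phi_V}_2^2=\mu_\phi(V)$, a term is non-zero only when $s.V\cap V\neq\emptyset$ (forcing $s$ into $VV^{-1}$ up to $\Gamma$), and so the parallel contribution is at most $\mu_\phi(V)\,\norm{\tau}_{L^1(\calP\cap VV^{-1})}$. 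For $s\in\calT$ the inner integral is supported on the overlap $s.V\cap V$; using $|\phi_V(gs)\overline{\phi_V}(g)|\le\tfrac12(|\phi_V(gs)|^2+|\phi_V(g)|^2)$ and a change of variables in the first term shows it is at most $\tfrac12\big(\mu_\phi(s.V\cap V)+\mu_\phi(V\cap s^{-1}.V)\big)$. Since $\tau$ is positive definite, $\supp(\tau)$ is symmetric under $s\mapsto s^{-1}$, and since parallelism amounts to $sL$ lying in the $\Gamma$-orbit of $L$ --- a condition invariant under $s\mapsto s^{-1}$ --- transversality (which, as $L$ is irreducible, is the negation of parallelism inside $\supp(\tau)$) is also inversion-invariant; thus both overlaps above are overlaps of $V$ with a translate under a transverse element.

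Now I would build the finite family $\calN$. Fix a transverse $t\in\supp(\tau)$ and let $V=U_{\delta'}$. Since $U_0$ is compact and contained in a fundamental domain $\calF$, for $\delta'$ small the overlap $t.V\cap V$, read in $U_0$-coordinates through $\pi$, is a finite union of sets $U_{\delta'}\cap b\,U_{\delta'}$ with $b=\gamma t$ ranging over a finite subset of $\GQ$ (finite because $U_0$ is bounded and $\Gamma$ discrete). As $\delta'\to0$ each such set decreases into $U_0\cap bU_0\subset L\cap bL$, and $\dim(L\cap bL)=\dim(\gamma tL\cap L)<\dim L$ by the very definition of ``$t$ transverse''; the overlap $V\cap t^{-1}.V$ is handled identically using that $t^{-1}$ is transverse too. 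Thickening by $\delta$ and using that $\supp(\tau)$ is finite, I may choose one $\delta'>0$ so that all the overlaps arising from transverse elements of $\supp(\tau)$ lie inside $\bigcup_b(U_0\cap bU_0)_\delta$. Collecting the resulting $N=U_0\cap bU_0$ (with multiplicity if an $N$ recurs) into a finite family $\calN$ gives $\tfrac12\sum_{s\in\calT}\big(\mu_\phi(s.V\cap V)+\mu_\phi(V\cap s^{-1}.V)\big)\le\sum_{N\in\calN}\mu_\phi(N_\delta)$. Combining with the parallel bound,
\[
\lambda\,\mu_\phi(U_{\delta'})^2\ \le\ \mu_\phi(U_{\delta'})\,\norm{\tau}_{L^1(\calP\cap U_{\delta'}U_{\delta'}^{-1})}\ +\ \norm{\tau}_\infty\sum_{N\in\calN}\mu_\phi(N_\delta),
\]
and dividing by $\lambda\,\mu_\phi(U_{\delta'})$ (the case $\mu_\phi(U_{\delta'})=0$ being trivial) is exactly the claimed inequality.

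The step I expect to be the main obstacle is the geometric one in the previous paragraph: showing, \emph{uniformly} over the finitely many transverse elements of $\supp(\tau)$, that the thickened overlaps sit inside $\delta$-neighbourhoods of genuinely lower-dimensional subvarieties of $U_0$ expressible as $U_0\cap bU_0$ with $b\in\GQ$. Compactness of $U_0\subset\calF$, discreteness of $\Gamma$ and finiteness of $\supp(\tau)$ are precisely what make the choice of $\delta'$ uniform; the dimension drop is built into the definition of transversality together with the irreducibility of $L$; and it is exactly this closure of the class of overlaps under passage to lower-dimensional pieces $U_0\cap bU_0$ that will subsequently permit an induction on $\dim L$.
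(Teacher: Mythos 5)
Your proof is correct and follows essentially the same route as the paper: the same spectral lower bound from positivity of $\tau$, the same parallel/transverse decomposition with Cauchy--Schwarz on the parallel part, the same pointwise $|ab|\le\frac12(|a|^2+|b|^2)$ estimate on the transverse part, and the same use of compactness of $U_0$ to fit the overlaps into $\delta$-neighbourhoods of finitely many lower-dimensional pieces $U_0\cap bU_0$.

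One small caveat: your appeal to inversion-invariance of transversality is unnecessary and is not obviously correct as stated. By \cref{stablem} the parallel elements are exactly $\Gamma\Stab_{\GQ}(L)$, and in general $\Gamma\Stab(L)\ne\Stab(L)\Gamma$, so it is not clear that $s$ transverse forces $s^{-1}$ transverse. But the argument does not need this: both $N=U_0\cap s^{-1}\gamma^{-1}U_0$ and $\gamma sN=\gamma sU_0\cap U_0$ have dimension $<\dim L$ solely because $s$ is transverse and left multiplication by $\gamma s$ is a diffeomorphism taking $N$ to $\gamma sN\subset \gamma sL\cap L$. The paper collects both $N$ and $\gamma sN$ into $\calN$ directly and never invokes any symmetry of $\supp(\tau)$ under inversion.
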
 

\begin{proof}
As before we begin with the expression
\begin{equation}\label{inner}
\left<\tau.\phi_{U_{\delta'}},\phi_{U_{\delta'}}\right>.
\end{equation}

Writing $V=U_{\delta'}$ the lower bound of \eqref{specbound1} still applies;
we refine the geometric arguments giving the upper bound.  For convenience we may assume
$V\subset \calF$ if $\delta'$ is small enough.
Separating $S$ into the parallel and transverse elements gives
$$\left<\tau.\phi_{U_{\delta'}},\phi_{U_{\delta'}}\right> = 
 \sum_{s\in \calP} \tau(s) \int \phi_V(gs)\overline{\phi_V}(g)dg
+\sum_{s\in \calT} \tau(s) \int \phi_V(gs)\overline{\phi_V}(g)dg\,.$$

Parallel intersections are controlled by a subgroup (the stabilizer of $L$) so their contribution
is analogous to the group case above and we simply apply Cauchy--Schwarz to each summand, getting:
$$\abs{\sum_{s\in \calP} \tau(s) \int \phi_V(gs)\overline{\phi_V}(g)dg}
  \leq \norm{\tau}_{L^1(\calP\cap VV^{-1})} \norm{\phi_V}^2\,.$$

For each $s\in \calT$ we have 
$1_{\Gamma V}(sg)1_{\Gamma V}(g)\ne0$ if and only if 
$g\in V\cap s^{-1}\gamma^{-1}V$ for some $\gamma\in \Gamma$
such that $\gamma sg\in V$. First, let $\gamma\in \Gamma$ be one such element. 
By the compactness of $U_0$, for each $\delta>0$ there exists $\delta'>0$
(depending also on $\gamma$ and $s$), such that 
$$
U_{\delta'}\cap s^{-1}\gamma^{-1}U_{\delta'}\subset (U_0\cap s^{-1}\gamma^{-1} U_0)_\delta.
$$ 
Multiplying by $\gamma s$ on the left we see that dimension of $U_0\cap s^{-1}\gamma^{-1} U_0$
is equal to the dimension of $\gamma sU_0\cap U_0$, which is strictly less than $\dim L$
since $s$ is transverse to $L$. Denoting $N=U_0\cap s^{-1}\gamma^{-1} U_0$ we have,
\begin{align*}
\abs{\int_{N_{\delta}}\phi(sg)\overline{\phi(g)}dg}
  &\le \frac{1}{2}\left(\int_{N_{\delta}}\abs{\phi(g)}^2dg+\int_{N_{\delta}}\abs{\phi(sg)}^2dg\right) \\
&=\frac{1}{2}\left(\int_{N_{\delta}}\abs{\phi(g)}^2dg+\int_{(\gamma sN)_{\delta}}\abs{\phi(g)}^2dg\right) \\
&= \frac12 \left(\mu_\phi(N_\delta) + \mu_\phi(\gamma sN_\delta)\right)\,.
\end{align*}
Summing over the finitely many $\gamma\in\GQ$ such that $\gamma s U_1\cap U_1$ is nonempty
we cover the image of $sV\cap V$ in $X$.  Since, further, there are finitely many $s\in \calT$
there are finitely many relevant pairs $(s,\gamma)$ and we obtain finitely many manifolds of
the required form (collect them together as $\calN$).  In addition, by the finiteness we can
choose $\delta'$ independently of $(s,\gamma)$ depending only on $U_0$.  In summary we have obtained:
$$\sum_{s\in \calT} \tau(s) \int \phi_V(gs)\overline{\phi_V}(g)dg \leq
  \norm{\tau}_\infty \sum_{N\in\calN}  \mu_\phi(N_\delta)\,.$$

Putting everything together we get
$$\lambda\norm{\phi_V}_2^4 \leq \norm{\tau}_{L^1(\calP\cap VV^{-1})} \norm{\phi_V}^2 
+ \norm{\tau}_\infty \sum_{N\in\calN}  \mu_\phi(N_\delta)\,.$$
Dividing by $\lambda$ and by $\norm{\phi_V}_2^2 = \mu_\phi(V)$ gives the claim.
\end{proof}

\begin{proof}[Proof of Theorem \ref{machine}]
Since our assumptions on $L$ holds for any real subvariety of it as well, and in particular for each of the irreducible components of $L$, of which there are at most finitely many, we may assume that $L$ is irreducible. 
Let $U$ be a bounded open subset of $L$ contained in a fundamental domain $\calF$ for $\Gamma\bs G$.
Since $L$ is a union of countably many such $U$, it is suffices to show $\mu(U)=0$. 
Given $\epsilon>0$ we will find $\delta>$ such that $\mu_\phi(U_\delta)\leq\epsilon$ for all
Hecke eigenfunctions $\phi$, by induction on $\dim L$.  

Accordingly let $L$ and $U$ as above, and assume the statement is true for any proper subvariety
$L'$ of $L$.  Let $S\subset \Stab_{\GQ}(L)$ be a finite-index subgroup such that $S\subset H=\HH(\R)$ where $\HH\subset\GG$ is a small subgroup defined over $\Qbar\cap\R$. Since $S$ has finite index
in $\Stab(L)$, if $V$ is a small enough neighbourhood of the identity in $H$ we have $V\cap \Stab(L)\subset S$.  In particular we can take $V=U_r U_r^{-1} \cap H$ if $U$ is small enough
(which we may assume with loss of generality) and $r>0$ is small enough.

Now given $\epsilon>0$ \cref{cor:weakmarshall} provides a set $J\subset \calH$ of
positive definite Hecke operators, whose supports are jointly contained in a finite set
and such that for every Hecke eigenfunction $\phi$ there is $\tau\in J$
eigenvalue $\lambda>\frac12$, and so that 
\begin{equation}
\frac{1}{\lambda}\norm{\tau}_{L^1(\rm{Stab}(L)\cap V)}\le \epsilon.
\end{equation}
By \cref{basic} there exists a finite collection $\calN$
of bounded open subsets of real subvarieties of $L$, of dimension strictly
lower than $\dim L$, depending on the common support of the operators in $J$
but not on each $\tau\in J$ separately,
such that for every $\delta>0$ there exists $\delta'>0$ such that
\begin{align*} \mu_\phi(U_{\delta'})
& \leq \frac{1}{\lambda}\left(\norm{\tau}_{L^1(\calP\cap U_{\delta'}U_{\delta'}^{-1})}
+\frac{\norm{\tau}_\infty}{\mu_\phi(U_{\delta'})}\sum_{N\in\calN}\mu_\phi(N_\delta)\right) \\
& \leq 2\epsilon + \frac{2\norm{\tau}_\infty}{\mu_\phi(U_{\delta'})}
\sum_{N\in\calN}\mu_\phi(N_\delta).
\end{align*}

By the induction hypothesis, there exists $\delta_0>0$ such that if
$\delta<\delta_0$ we have
$\frac{2\norm{\tau}_\infty}{\mu_\phi(U_{\delta'})}\sum_{N\in\calN}\mu_\phi(N_\delta)<\epsilon$.
It follows that for every $\epsilon>0$ there exists $\delta'>0$ such that
$$
\mu_\phi(U_{\delta'})\ll\sqrt{\epsilon}, 
$$
completing the proof.
\end{proof}

\section{Recurrence}\label{rec-section}
Let $F$ be a number field, $\GG/F$ a reductive group.  Let $G=\GG(F_\infty)$.
Let $\Kf<\GAf$ be an open compact subgroup and let $X=\GF\bs\GAF/\Kf$.
For a normalized $\phi\in L^2(X)$ let $\mu_\phi$ be the probability measure
with density $\abs{\phi}^2$ with respect to the $\GA$-invariant measure on
$\GF\bs\GAF$.  

In this section we verify Condition \ref{cond1} from \cref{thm:EL-LowEntropy}
for any measure $\mu$ which is a weak-* limit of such $\mu_\phi$.
We will use the following result.
\begin{lem}[{\cite[Lem.\ A.1]{SilbermanVenkatesh:AQUE_Ent}}]
There exist $\ell,\ell'>0$ (depending on $\GG$) and $p_0>0$ (depending also
on $\Kf$) such that for all $p>p_0$ and Hecke eigenfunctions
$\phi\colon\GAf/\Kf$ there exists
a Hecke operator $\tau_p$ such that:
\begin{enumerate}
\item Its support satisfies
$ p^{\ell'} \gg  \# \supp(\tau_p) \gg  p$, where we think of $\tau_p$ as a function on $G_p/K_p$;
\item $|\tau_p| \in \{0,1\}$; 
\item $\tau_p \star \phi = \Lambda(p) \phi$ where
$\Lambda(p)\gg \# \supp(\tau)^{1/2}$
\item For any $s \in \supp(\tau_p)$, the denominator of $s$ is
$\ll_N p^{\ell}$.
\end{enumerate}
\end{lem}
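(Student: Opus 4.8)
The plan is to establish this by the standard single-prime amplification construction, using Macdonald's formula for the spherical Hecke operators at $p$. First I would fix $p>p_0$ large enough that $K_p=\GG(\Zp)$ is hyperspecial in $G_p=\GG(\Qp)$; then $\GG_{\Qp}$ is unramified, one has the Cartan decomposition $G_p=\bigsqcup_\mu K_p\varpi^\mu K_p$ over dominant coweights $\mu$ of a maximal $\Qp$-split torus, the algebra $\Cic(K_p\bs G_p/K_p)$ is commutative, and the Hecke eigenfunction $\phi$ has a well-defined Satake parameter $c=c_\phi$, a semisimple class in the dual group. The candidate amplifiers are the characteristic functions $\tau_\mu=\mathbf{1}_{K_p\varpi^\mu K_p}$: these are $\{0,1\}$-valued and bi-$K_p$-invariant; viewed as functions on $G_p/K_p$ their support has $N_\mu=p^{\langle 2\rho,\mu\rangle}\bigl(1+O_\GG(1/p)\bigr)$ elements; each element of the support is, in a fixed faithful $\Z$-model of $\GG$, an integral matrix whose denominator is a power of $p$ dividing $p^{\ell_\mu}$ with $\ell_\mu$ depending only on $\mu$; and $\tau_\mu\star\phi=\widehat{\tau_\mu}(c)\,\phi$ where, by Macdonald's formula,
\[
\widehat{\tau_\mu}(c)=p^{\langle\rho,\mu\rangle}\bigl(\chi_\mu(c)+r_\mu(c,p)\bigr),
\]
with $\chi_\mu$ the leading term (a positive weighted Weyl-orbit sum, essentially $\sum_{w\in W}(wc)^\mu$) and $r_\mu$ a correction of lower order in $p$.

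Next I would record the a priori bound on $c$: since $\phi\in L^2(X)$ the local representation of $G_p$ is unitarizable, so $c$ lies in a fixed compact region of the dual torus (the ``width $\le\tfrac12$'' region containing the tempered locus), on which $r_\mu(c,p)=O_\GG(p^{-1/2})$; hence for $p$ large the Macdonald main term dominates whenever $|\chi_\mu(c)|$ is bounded below. The proof then reduces to the Lie-theoretic claim that there is a \emph{finite} set $\Sigma$ of nonzero dominant coweights, depending only on $\GG$, such that $\max_{\mu\in\Sigma}|\chi_\mu(c)|\gg_\GG 1$ for every $c$ in that region. Granting this, I take $\tau_p=\tau_\mu$ for the maximizing $\mu$ --- which depends on $\phi$ only through $c$ and hence ranges over $\Sigma$ --- and read off the four conclusions: $\#\supp(\tau_p)=N_\mu$ lies between $\gg p$ and $\ll_\GG p^{\ell'}$ with $\ell'=\max_{\mu\in\Sigma}\langle 2\rho,\mu\rangle$; all denominators divide $p^{\ell}$ with $\ell=\max_{\mu\in\Sigma}\ell_\mu$; $|\tau_p|\in\{0,1\}$; and $|\Lambda(p)|=|\widehat{\tau_\mu}(c)|\gg p^{\langle\rho,\mu\rangle}\asymp N_\mu^{1/2}=\#\supp(\tau_p)^{1/2}$, which is~(3). (If a positive-definite operator is wanted, replace $\tau_\mu$ by $\tau_\mu\star\tau_\mu^{*}$, which squares the support and the eigenvalue and so preserves every estimate.)

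It remains to prove the Lie-theoretic claim, which I would do by splitting on temperedness of $c$. On the tempered (unitary) locus the parameter space is compact; the functions $\chi_\mu$ with $\mu$ ranging over the fundamental coweights generate the ring of $W$-invariant regular functions on the dual torus and separate $W$-orbits, so they have empty common zero set, and after enlarging $\Sigma$ by finitely many coweights to also kill any simultaneous zeros of this finite family, compactness upgrades non-vanishing to the desired uniform lower bound. For $c$ outside the tempered locus some coordinate of $c$ has absolute value $>1$: taking $\mu$ a regular dominant coweight and moving $c$ into dominant position, the dominant Weyl term of $\chi_\mu(c)$ has absolute value $\ge 1$ while every other term carries a strictly smaller weight and so cannot cancel it by more than a factor bounded in terms of $\GG$, giving $|\chi_\mu(c)|\gg_\GG 1$ there as well --- this is where it helps, rather than hurts, that we do not assume the Ramanujan bound.

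I expect the main obstacle to be precisely this last step: making the separation-of-points/compactness argument effective and uniform, and checking that one fixed finite set $\Sigma$ of coweights works in every rank and bounds $\max_{\mu\in\Sigma}|\chi_\mu(c)|$ below on the whole region. For the groups that actually occur in this paper --- $\SL_2$, $\SO(1,n)$, and $\SO_5\isom\PGSp_4$ at split primes --- this is a short explicit computation with Chebyshev-polynomial (Weyl character) identities, exactly as in the classical Iwaniec--Sarnak amplifier, and the general case is the same argument with more bookkeeping.
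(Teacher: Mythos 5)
The paper does not prove this lemma; it cites it verbatim from the appendix of Silberman--Venkatesh, so there is no internal proof to compare against. Your overall strategy (the Cartan decomposition, the operators $\tau_\mu=\mathbf{1}_{K_p\varpi^\mu K_p}$, Macdonald's formula, reduction to a lower bound on Weyl characters of the Satake parameter over a finite set of coweights) is the standard route and is, morally, the argument in that appendix. The reduction to the ``Lie-theoretic claim'' is correct and the four conclusions do read off from it as you say.

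The claim itself, however, is not established by the argument you give, and this is a genuine gap rather than a matter of bookkeeping. You assert that the fundamental characters ``separate $W$-orbits, so they have empty common zero set.'' Separating orbits means the map $(\chi_{\mu_1},\dots,\chi_{\mu_n}):T/W\to\C^n$ is \emph{injective}; it says nothing about whether $0$ lies in its image. In fact for a simply connected group this map is an \emph{isomorphism} onto $\C^n$, so a common zero always exists. Already for $\SL_2$ the unique fundamental character $\chi_1(c)=c+c^{-1}$ vanishes at $c=i$, a point of the tempered locus, so your premise fails at the first example. The subsequent instruction to ``enlarge $\Sigma$ to kill simultaneous zeros'' is precisely the content that needs proving and is left unjustified; it is not automatic that adjoining finitely many coweights removes all common zeros on the relevant region. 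Your non-tempered case is also incorrect: for $\SL_2$ with $c=re^{i\pi/2}$ and $r>1$ close to $1$, the ``dominant'' Weyl term $c$ has $|c|=r\geq 1$ but $|\chi_1(c)|=|c+c^{-1}|=r-r^{-1}$ tends to $0$, so the claim that lower terms ``cannot cancel it by more than an $O_\GG(1)$ factor'' fails arbitrarily close to the tempered wall, which is exactly the hard regime.

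What actually closes the gap — and what you gesture at in your final paragraph without writing down — is a character (equivalently, Hecke-algebra convolution) identity in place of the compactness-plus-Nullstellensatz heuristic. For a suitable $\mu$ one has $\chi_\mu\chi_{\mu^*}=1+\sum_{\lambda\in\Sigma_0}m_\lambda\chi_\lambda$ with $m_\lambda\in\Z_{\geq0}$ and $\Sigma_0$ a finite set of nonzero dominant coweights depending only on $\GG$; setting $\Sigma=\{\mu,\mu^*\}\cup\Sigma_0$ and $M=\sum m_\lambda$, if $|\chi_\nu(c)|<\epsilon$ for all $\nu\in\Sigma$ then $1\leq\epsilon^2+M\epsilon$, a contradiction for $\epsilon$ small. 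This works uniformly for all $c$ in the relevant region — no split into tempered and non-tempered is needed, and no compactness argument is needed — and on the Hecke-algebra side it is the multi-variable Iwaniec--Sarnak trick. (A smaller point: the region in which the Satake parameter lives is bounded, but its bound in absolute value is $p$-dependent, so calling it ``a fixed compact region'' and applying compactness across all $p$ requires the kind of uniformity that the convolution identity gives you for free; your phrasing hides this.) In short: right scaffolding, but the core positivity step as written does not hold, and you should replace the compactness/Nullstellensatz paragraph by the explicit $\chi_\mu\chi_{\mu^*}$ identity.
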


Now, by the arguments of \cite[\S8]{Lindenstrauss:SL2_QUE} (see especially
the deduction of Cor.\ 8.4 from Lemma 8.3 and the deduction therefrom of
Thm.\ 8.1), it suffices to prove the following:

\begin{lem}[\cf {\cite[Lem.\ 8.3]{Lindenstrauss:SL2_QUE}}] \label{previous}
There exists $\alpha>0$
such that for all Hecke eigenfunctions $\phi\colon \GAf/\Kf$ and $N\geq 1$
we have
$$\sum_{\substack{s\in \GAf/\Kf\\ \denom(s)\leq N}} \abs{\phi(s)}^2 \gg N^\alpha \abs{\phi(e)}^2\,.$$
\end{lem}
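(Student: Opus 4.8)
The plan is to iterate the Hecke operators $\tau_p$ from the preceding lemma to amplify the value at the identity, following the template of \cite[\S8]{Lindenstrauss:SL2_QUE}. First I would fix a single large prime $p>p_0$ and consider the $n$-fold convolution $\tau_p^{\star n}$ acting on a Hecke eigenfunction $\phi$. Since $\tau_p\star\phi=\Lambda(p)\phi$ with $\Lambda(p)\gg \#\supp(\tau_p)^{1/2}\gg p^{1/2}$, we get $\tau_p^{\star n}\star\phi=\Lambda(p)^n\phi$, so in particular
\begin{equation}
\Lambda(p)^n\,\phi(e) = \bigl(\tau_p^{\star n}\star\phi\bigr)(e) = \sum_{s} \tau_p^{\star n}(s)\,\phi(s),
\end{equation}
where $s$ ranges over $\GAf/\Kf$ and the sum is finite. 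The support of $\tau_p^{\star n}$ is contained in the set of $s$ that are products of $n$ elements of $\supp(\tau_p)$; by property (4) each such $s$ has denominator $\ll_N p^{n\ell}$, and by property (1) there are at most $p^{n\ell'}$ such terms, with the coefficients $\tau_p^{\star n}(s)$ bounded by the number of ways of writing $s$ as an ordered product, hence by $(\#\supp(\tau_p))^{n}\ll p^{n\ell'}$.

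Next I would apply Cauchy--Schwarz to the displayed identity: writing $T_n=\supp(\tau_p^{\star n})$,
\begin{equation}
\Lambda(p)^{2n}\,\abs{\phi(e)}^2 \le \Bigl(\sum_{s\in T_n}\abs{\tau_p^{\star n}(s)}^2\Bigr)\Bigl(\sum_{s\in T_n}\abs{\phi(s)}^2\Bigr).
\end{equation}
The first factor is bounded by $\norm{\tau_p^{\star n}}_\infty \cdot \norm{\tau_p^{\star n}}_1$; using $\norm{\tau_p^{\star n}}_\infty \le (\#\supp\tau_p)^n$ and $\norm{\tau_p^{\star n}}_1 \le \norm{\tau_p}_1^n = (\#\supp\tau_p)^n$ (as $|\tau_p|\in\{0,1\}$), this factor is $\le (\#\supp\tau_p)^{2n}\asymp \Lambda(p)^{4n}\cdot(\text{bounded power loss})$; more carefully, since $\Lambda(p)^2\gg \#\supp(\tau_p)$ the ratio $(\#\supp\tau_p)^{2n}/\Lambda(p)^{2n}$ is at most $(\#\supp\tau_p)^{n}\ll p^{n\ell'}$. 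Therefore
\begin{equation}
\sum_{s\in T_n}\abs{\phi(s)}^2 \gg \frac{\Lambda(p)^{2n}}{(\#\supp\tau_p)^{2n}}\abs{\phi(e)}^2 \gg p^{-n\ell'}\,\abs{\phi(e)}^2,
\end{equation}
while every $s\in T_n$ has $\denom(s)\le N_0:=p^{n\ell}$. Choosing $n$ so that $p^{n\ell}\le N < p^{(n+1)\ell}$ (for $N$ large; small $N$ is trivial) gives $n\asymp \log N/\log p$, hence $p^{-n\ell'} \gg N^{-\ell'/\ell}\cdot(\text{constant})$, and we conclude with $\alpha = \ell'/\ell$...

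The loss here is too lossy: the crude bound $\norm{\tau_p^{\star n}}_\infty\le(\#\supp\tau_p)^n$ is wasteful because most products $s$ of $n$ generators are distinct. The main obstacle, and where I would spend the most care, is getting an essentially sharp bound on the multiplicities $\tau_p^{\star n}(s)$ and on $\#T_n$: in the $\SL_2$ case one uses that $\#\supp(\tau_p^{\star n})\asymp (\#\supp\tau_p)^n$ with multiplicities $O(1)$ (or at most polynomial in $n$), which comes from the tree structure of $G_p/K_p$ and the fact that the $\tau_p$ are supported on a single ``sphere''. For a general reductive $\GG$ the building is higher-dimensional, but the conclusion of the cited lemma (support size $\gg p$, coefficients in $\{0,1\}$) is precisely designed so that $\tau_p^{\star n}$ still has support of size $\gg p^{cn}$ with controlled multiplicities; I would either quote this from \cite{SilbermanVenkatesh:AQUE_Ent} or reprove it by tracking the combinatorics of the Hecke operator on $G_p/K_p$. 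Granting that, the Cauchy--Schwarz argument above yields $\alpha>0$ with the desired uniformity in $\phi$, since all constants depend only on $\GG$, $\Kf$, and the fixed prime $p$. Finally, as noted in the excerpt, the arguments of \cite[\S8]{Lindenstrauss:SL2_QUE} deduce $\GAf$-recurrence of $\mu$ (Condition \ref{cond1}) from this lower bound, so no further work is needed on that front.
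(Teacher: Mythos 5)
Your proposal has a genuine gap, and in fact the approach of iterating powers of a single $\tau_p$ cannot yield the lemma with the amplifier you are given. The bound from the cited lemma is $\Lambda(p)\gg\#\supp(\tau_p)^{1/2}$, i.e.\ $\Lambda(p)^2\gg\#\supp(\tau_p)$, which only \emph{breaks even}. Even granting perfect control of the convolution combinatorics --- that is, $\#\supp(\tau_p^{\star n})\asymp(\#\supp\tau_p)^n$ with $O(1)$ multiplicities, so $\norm{\tau_p^{\star n}}_2^2\asymp(\#\supp\tau_p)^n$ --- your Cauchy--Schwarz step gives
\begin{equation*}
\sum_{s\in T_n}\abs{\phi(s)}^2 \ \gg\ \frac{\Lambda(p)^{2n}}{(\#\supp\tau_p)^n}\,\abs{\phi(e)}^2 \ \gg\ \abs{\phi(e)}^2\,,
\end{equation*}
a bound independent of $n$ and hence of $N$: you get $\alpha=0$, not $\alpha>0$. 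With the cruder multiplicity bound you actually use, the estimate degrades further to $N^{-\ell'/\ell}\abs{\phi(e)}^2$, which is a \emph{decaying} lower bound and proves nothing (the left-hand side is monotone increasing in $N$, so a bound with negative exponent is vacuous). The polynomial growth in $N$ cannot come from deepening along one prime; it has to come from the \emph{number} of primes.

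That is exactly what the paper does, and it is simpler. For each prime $p_0<p<N$ one applies Cauchy--Schwarz once to $\Lambda(p)\phi(e)=\sum_{s\in\supp(\tau_p)}\phi(s)$ to obtain
\begin{equation*}
\abs{\phi(e)}^2 \ \leq\ \frac{\#\supp(\tau_p)}{\abs{\Lambda(p)}^2}\sum_{s\in\supp(\tau_p)}\abs{\phi(s)}^2 \ \ll\ \sum_{s\in\supp(\tau_p)}\abs{\phi(s)}^2\,.
\end{equation*}
Summing over the $\gg N/\log N$ primes below $N$, and using that the supports of $\tau_p$ for distinct $p$ live at distinct finite places (so contribute disjoint $s\in\GAf/\Kf$) and that each $s\in\supp(\tau_p)$ has $\denom(s)\ll p^\ell\leq N^\ell$, one gets $\frac{N}{\log N}\abs{\phi(e)}^2\ll\sum_{\denom(s)\ll N^\ell}\abs{\phi(s)}^2$; renaming $N^\ell\mapsto N$ gives any $\alpha<1/\ell$. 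This sidesteps entirely the convolution-multiplicity combinatorics you were (rightly) worried about --- no $n$-fold powers are ever formed, so there is no tree-structure or building argument to carry out, and no loss to control.
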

Note that the implied constant is independent of $\phi$.

\begin{proof}
For each $p$ let $\tau_p$ be as in \cref{previous}.  Then
$\Lambda(p) \phi(e) = \sum_{s\in \supp(\tau_p)} \phi(s.e)$ so by
Cauchy--Schwarz
$$\abs{\phi(e)}^2 \leq \frac{\# \supp(\tau_p)}{\abs{\Lambda(p)}^2}
  \sum_{s\in\supp(\tau_p)} \abs{\phi(s)}^2\,.$$

Summing over all primes $p_0 < p < N$ we obtain
\begin{align*}
\frac{N}{\log N}\abs{\phi(e)}^2
 & \ll \sum_{p\leq N} \sum_{s\in\supp(\tau_p)} \abs{\phi(s)}^2 \\
 & \leq \sum_{\substack{s\in \GAf/\Kf \\ \denom(s)\ll N^{\ell}}} \abs{\phi(s)}^2\,.
\end{align*}

Rearranging and renaming $N$ we obtain for any $\alpha < \frac{1}{\ell}$

$$ N^\alpha \abs{\phi(e)}^2 \ll
   \sum_{\substack{s\in \GAf/\Kf \\ \denom(s)\ll N}} \abs{\phi(s)}^2\,.$$
\end{proof}

\begin{rem} We use the amplifier above because it exists in the literature
in a convenient form; essentially any amplifier would do here, including the
ones used later to prove the positive entropy and few exceptional returns
conditions, which are based on this one through Marshall's work.
\end{rem}

\section{Positive entropy}\label{entropy-sec}
In this section we will prove the positive entropy condition \ref{cond2}. 
In general, suppose that $\mu$ is a probability measure on a quotient $X=\Gamma\bs G$ where $G$ is a semisimple Lie group and $\Gamma$ a lattice in $G$, and suppose that $\mu$ is invariant under the action of an element $a\in G$. Let $C_G(a)\subset G$ denote the centralizer of $a$ in $G$. The following result is well-known.
\begin{lem}\label{wellknown1}
Let $\mu$ be an $a$-invariant measure on $X$.  Suppose that for any
open bounded bounded $U\subset C_G(a)$ and $x\in X$ there exist $C,h>0$
so that $\mu(xU_\delta)\le C\delta^h$.
Then almost every $a$-ergodic component of $\mu$ has positive
ergodic-theoretic entropy with respect to $a$.\qed
\end{lem}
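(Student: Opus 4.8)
The plan is to argue by contradiction, exploiting the classical description of zero–entropy diagonalizable actions through leafwise measures along the horospherical foliations. By the ergodic decomposition of $\mu$ with respect to $a$ it suffices to derive a contradiction from the assumption that the set of $a$-ergodic components of $\mu$ with vanishing entropy has positive measure. Let $\mu_0\le\mu$ be the part of $\mu$ carried by those components; it is a non-zero measure, and since $\mu_0\le\mu$ it still satisfies $\mu_0(xU_\delta)\le C\delta^h$ for every open bounded $U\subset C_G(a)$ and every $x\in X$ (with the same $C,h$). So it is enough to show that the ergodic components of such a $\mu_0$ cannot all have zero entropy.

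\emph{Reducing zero entropy to a geometric statement.} Write $\lieg=\lieg^-\oplus\lieg^0\oplus\lieg^+$ for the decomposition of $\lieg$ into the negative, zero, and positive weight spaces of $\Ad(a)$, so that $\exp\lieg^0$ is the identity component of $C_G(a)$ while $G^+=\exp\lieg^+$ and $G^-=\exp\lieg^-$ are the two horospherical subgroups attached to $a$ (one stable, one unstable). By the standard theory of leafwise measures for diagonal flows on homogeneous spaces (Margulis--Tomanov; see also the account of Einsiedler--Lindenstrauss), for an $a$-ergodic measure $\nu$ the entropy $h_\nu(a)$ equals the leafwise entropy of $a$ along its unstable horospherical subgroup, and this vanishes precisely when the leafwise measures of $\nu$ along that subgroup are trivial for $\nu$-a.e.\ point. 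Since $h_\nu(a)=h_\nu(a^{-1})$ and passing to $a^{-1}$ interchanges the two horospherical subgroups, vanishing of $h_\nu(a)$ in fact forces the leafwise measures of $\nu$ along \emph{both} $G^+$ and $G^-$ to be trivial. Applying this to each component, the leafwise measures of $\mu_0$ along both horospherical foliations are trivial $\mu_0$-a.e.

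\emph{From trivial leafwise measures to $C_G(a)$-plaques.} The multiplication map $G^-\times C_G(a)\times G^+\to G$ restricts to a diffeomorphism between neighbourhoods of the identity, so the two horospherical foliations together span a complement, at every point, to the foliation of $X$ by $C_G(a)$-orbits. Triviality of the leafwise measures along both horospherical directions says exactly that $\mu_0$ does not spread transverse to the $C_G(a)$-foliation; via the product/recursion structure relating these leafwise measures to the conditionals of $\mu_0$ transverse to that foliation, this forces $\mu_0$ to be locally carried by single $C_G(a)$-orbits. Concretely: for $\mu_0$-a.e.\ $x$ there is $r_0(x)>0$ with $\mu_0\bigl(B_r(x)\setminus xC_G(a)\bigr)=0$ for all $0<r<r_0(x)$, where $B_r(x)$ denotes the $r$-ball about $x$ in $X$.

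\emph{Contradiction, and the main obstacle.} Fix such an $x$ in the support of $\mu_0$ and take $r>0$ small enough that $\mu_0(B_r(x))>0$ and $B_r(x)\cap xC_G(a)\subseteq x\bigl(C_G(a)\cap B_{2r}\bigr)$, where $B_{2r}\subset G$ is the $2r$-ball about the identity. Put $U=C_G(a)\cap B_{2r}$, an open bounded subset of $C_G(a)$. Then $U\subseteq U_\delta$ for every $\delta>0$, hence $xU_\delta\supseteq xU\supseteq B_r(x)\cap xC_G(a)$, and so $\mu(xU_\delta)\ge\mu_0(xU_\delta)\ge\mu_0(B_r(x))>0$ for all $\delta$, contradicting $\mu(xU_\delta)\le C\delta^h\to0$. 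The only step carrying genuine content --- and the one I expect to need the most care --- is the passage from the dynamical input (vanishing leafwise measures along both horospherical subgroups, i.e.\ zero entropy for $a$) to the geometric conclusion that $\mu_0$ lies on individual $C_G(a)$-plaques; this is where one invokes the product structure of leafwise measures under the horospherical decomposition. The remaining ingredients --- ergodic decomposition, the formula expressing $h_\nu(a)$ as a leafwise entropy together with its characterization of zero entropy, and the final covering estimate --- are standard.
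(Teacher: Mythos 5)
The paper gives no proof of this lemma (it is stated with \verb|\qed| as well known), so there is nothing to compare against line by line; I will instead assess the argument on its own terms, and it has a genuine gap.

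The reduction to a contradiction, the ergodic decomposition, and the identification ``$h_\nu(a)=0\iff\nu_x^{G^\pm}=\delta_e$ a.e.'' (applied for both $a$ and $a^{-1}$) are all fine. The argument breaks at the step you yourself flag as the main obstacle: passing from trivial leafwise measures along $G^+$ and along $G^-$ \emph{separately} to triviality of the leafwise measure along the joint $G^-G^+$-foliation, i.e.\ to $\mu_0$ being locally carried by $C_G(a)$-orbits. The product structure you invoke here does not hold for measures invariant under a single diagonalizable element. It is a theorem of Einsiedler--Katok type that requires invariance under a \emph{higher rank} torus which can isolate the coarse Lyapunov weight; in the rank one setting of this paper ($G\isom\SO(1,4)$, $A$ one--dimensional) there is no such isolation and the product structure genuinely fails. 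Concretely, already for $\Gamma\bs\SL_2(\R)$ one can take any zero-entropy minimal subsystem of the Bowen--Series coding (a Sturmian subshift, say) and consider its unique ergodic measure lifted to the geodesic flow. This gives an $A$-ergodic, $A$-invariant measure $\nu$ with $h_\nu(a)=0$, hence $\nu_x^N=\nu_x^{N^-}=\delta_e$ a.e.; yet the support of $\nu$ is a Cantor set of $A$-orbits and for a.e.\ $x$ one has $\nu\bigl(B_r(x)\cap xA\bigr)=0$ while $\nu\bigl(B_r(x)\bigr)>0$. So the intermediate claim ``zero entropy $\Rightarrow$ locally carried by $C_G(a)$-orbits'' is simply false, and the contradiction you derive from it is not available. (These examples do \emph{not} contradict the lemma: by Ledrappier--Young their transverse dimension along $G^\pm$ is zero, so their tube measures decay slower than any power of $\delta$ and the hypothesis of the lemma fails for them.)

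The standard (and essentially only) route to this lemma is direct, not by contradiction: one shows the tube bound forces exponential decay of Bowen balls. For $U\subset C_G(a)$ a small neighbourhood of $e$ and $\delta\sim\epsilon e^{-\lambda n}$, the two-sided Bowen ball $B^{[-n,n]}_\epsilon(x)$ is contained in $xU_\delta$, so $\mu\bigl(B^{[-n,n]}_\epsilon(x)\bigr)\le C\epsilon^h e^{-\lambda h n}$; applying the Brin--Katok local entropy formula (which for a non-ergodic $\mu$ recovers $h_{\mu_x}(a)$ at a.e.\ $x$, $\mu_x$ the ergodic component through $x$), or equivalently estimating the information function of a suitable finite partition, gives $h_{\mu_x}(a)\gg h\lambda>0$ a.e. This is the argument behind the references the paper cites (Bourgain--Lindenstrauss, Silberman--Venkatesh), and it yields an explicit lower bound with no contradiction needed. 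You should replace the second and third paragraphs of your proof with this direct estimate; the reduction in your first paragraph, and your final covering computation, are then unnecessary.
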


Since $G\isom\SO(1,4)$ has real rank $1$, every regular $a\in A$
has $C_G(a)=C_G(A)=AM$. Positivity of the entropy for limits of Hecke
eigenfunctions then follows immediately from \cref{quanmachine} upon
verifying that $AM$ is small.

For later reference we also consider another subgroup of $G$.  Let
$K_1<K$ be any connected subgroup isogenous to $SO(3)\times\SO(2)$
(there are two conjugacy classes in $K$, but they are conjugate in $G$).

\begin{lem}\label{smallnesslem1}
The subgroups $AM,K_1 \subset \SO(1,4)$ are small.
\end{lem}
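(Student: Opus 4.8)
The plan is to verify the numerical criteria of \cref{defn:small} directly for the two subgroups $AM$ and $K_1$ inside $\GG = \SO(1,4)$, working over $\bar\Q$ (equivalently over $\C$) as the definition permits. The computation is entirely one about root systems: for a maximal torus $\TT$ of $\GG_\C \isom \SO_5(\C)$ of type $B_2$, with positive roots $e_1, e_2, e_1-e_2, e_1+e_2$ and $\rho_\GG = \tfrac12(3e_1 + e_2)$, we must identify, for each candidate $\HH \in \{AM, K_1\}$, a maximal torus $\bSS$ of $\HH_\C$ sitting inside $\TT$, compute the norm $\norm{X}_\HH$ on $\lies_{\R}$ from the root system of $\HH$, and compare it with $\tfrac12\norm{X}_\GG$. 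Recall $\norm{X}_\GG = \max_{w \in W_\GG}\langle \rho_\GG, w.X\rangle$; for $B_2$ this Weyl-orbit maximum is $\tfrac12(3|x_1| + |x_2|)$ after sorting so that the larger coordinate in absolute value is first, i.e. $\norm{(x_1,x_2)}_\GG = \tfrac32\max(|x_1|,|x_2|) + \tfrac12\min(|x_1|,|x_2|)$.

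First I would treat $AM$. Here $A$ is the split one-parameter torus $\diag(\lambda,\lambda^{-1},1,1,1)$ and $M \isom \{\pm I_2\}\times\SO(3)$; a reductive subgroup containing $AM$ with the same identity component is $AM^\circ \isom \GL_1 \times \SO_3$, which over $\C$ is a rank-$2$ reductive group whose maximal torus is all of $\TT$. So we are in case (b) of the smallness definition: $\bSS = \TT$. In coordinates the $\SO_3$-factor contributes a single positive root — say $2e_2$ after a suitable identification, so that $\rho_\HH = e_2$ — while the $\GL_1$ factor (the image of $A$) contributes nothing to $\rho_\HH$. Thus $\norm{X}_{AM} = \max_{w}\langle e_2, w.X\rangle = \max(|x_1|,|x_2|)$ on $\lies_\R = \liet_\R$. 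Temperedness reads $\max(|x_1|,|x_2|) \le \tfrac12\bigl(\tfrac32\max(|x_1|,|x_2|) + \tfrac12\min(|x_1|,|x_2|)\bigr) = \tfrac34\max + \tfrac14\min$, which holds with room to spare; and for the strict inequality in case (b) one exhibits $X$, e.g. $X = (1,0)$: then $\norm{w.X}_{AM} = 1$ for all $w$ while $\tfrac12\norm{X}_\GG = \tfrac34 > \tfrac12 \cdot 1$... wait — one must instead check $\norm{w.X}_{\HH} < \tfrac12\norm{X}_\GG$ for all $w$, i.e. $1 < \tfrac34$, which is false for that $X$; so one picks $X = (0,1)$ or better still $X=(t,1)$ with $t$ large. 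Concretely $X = (2,1)$ gives $\norm{X}_\GG = \tfrac32\cdot 2 + \tfrac12 = \tfrac72$, half of which is $\tfrac74$, while $\norm{w.X}_{AM} = \max(2,1) = 2$ — still too big. The right choice exploits that the $\SO_3$ root pairs only with the second coordinate: take $X = (0,1)$, rotate by the $B_2$-Weyl element swapping coordinates to land the weight on the large $\rho_\GG$-coordinate; one checks $\sup_w\langle e_2, w.X\rangle = 1$ while $\tfrac12\norm{(0,1)}_\GG = \tfrac12\cdot\tfrac32 = \tfrac34$. That still fails, so in fact $AM$ barely is small only if the relevant $\rho_\HH$ direction is genuinely "short"; the correct bookkeeping is that $\rho_{AM}$ is supported on the $\SO_3$-root, whose coroot direction is where $\norm{\cdot}_\GG$ is \emph{small} (coefficient $\tfrac12$ rather than $\tfrac32$), giving $\norm{X}_{AM} = \tfrac12|x_2|$-type behavior relative to the $\GG$-norm — I would pin down the precise normalization of the $\SO_3 \subset \SO_5$ embedding (it is the block $\SO(3)$ acting on $x_2,x_3,x_4$, so its torus is a rotation in one of the $e_i$ directions, contributing root $\pm 2e_1$ say), and then the inequality becomes comfortable. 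The honest statement is that this is a short finite root-system computation and the only subtlety is getting the embedding's torus normalization right.

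Next I would treat $K_1$, a connected subgroup of $K \isom \SO(4)$ isogenous to $\SO_3 \times \SO_2$. Over $\C$, $K \isom \SO_4$ has rank $2$, and $\SO_3\times\SO_2$ also has rank $2$, and since $K$ shares a maximal torus with $G$ (as $G/K$ is a symmetric space of rank $1$, the maximal torus of $K$ together with $A$ generates a maximal torus of $G$ — actually $\TT \subset K \cdot A$), one identifies the maximal torus $\bSS$ of $K_1$ with a rank-$2$ subtorus of $\TT$; depending on the conjugacy class this is either all of $\TT$ or a codimension-$0$ torus, so again likely case (b) or else case (a) if the rank drops. I would compute $\Phi(K_1:\bSS)$: the $\SO_3$ factor gives one positive root (length-$2$ vector $2f_1$), the $\SO_2$ factor gives none, so $\rho_{K_1} = f_1$ where $f_1$ is a suitable basis vector. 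Writing $f_1, f_2$ in terms of $e_1,e_2$ via the embedding $\SO_4 \hookrightarrow \SO_5$ (on coordinates $x_1,x_2,x_3,x_4$), one has $f_1 = \tfrac12(e_1+e_2)$, $f_2 = \tfrac12(e_1 - e_2)$ or similar, and then $\norm{X}_{K_1} = \max_w\langle f_1, w.X\rangle$ — evaluated over $W_{K_1}$, which is just a sign flip — giving roughly $\tfrac12|x_1 + x_2|$ or $\max(\cdot)$ of such. The temperedness inequality $\norm{X}_{K_1} \le \tfrac12\norm{X}_\GG$ then follows from the elementary estimate $|x_1+x_2| \le 2\max(|x_1|,|x_2|) \le \tfrac43(\tfrac32\max + \tfrac12\min) \cdot(\text{const})$, and the strict inequality of case (b) is obtained at a generic $X$.

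The main obstacle — and really the only one — is bookkeeping: correctly normalizing the various root-system embeddings ($\SO_3 \hookrightarrow \SO_5$ for the $M$-factor, $\SO_3\times\SO_2 \hookrightarrow \SO_4 \hookrightarrow \SO_5$ for $K_1$), so that the comparison of $\norm{\cdot}_\HH$ with $\tfrac12\norm{\cdot}_\GG$ comes out with the correct constants; in particular, one must double-check whether each case falls under clause (a) (torus rank drop) or clause (b) (equal rank, needing the strict inequality at some $X$), and in the latter case exhibit an explicit witness $X$. I do not expect any conceptual difficulty: both inequalities are comparisons of piecewise-linear functions of two real variables, and the "$\tfrac12$" slack in the temperedness bound is generous because the $\SO_3$- and $\SO_2$-subfactors contribute very few roots to $\rho_\HH$ relative to the four positive roots of $B_2$. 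I would organize the proof as two short paragraphs, one per subgroup, each consisting of: (i) choice of the reductive overgroup and its maximal torus inside $\TT$; (ii) the list of positive roots and the resulting $\rho_\HH$; (iii) the explicit formula for $\norm{\cdot}_\HH$; (iv) the inequality check, with a witness $X$ where case (b) applies.
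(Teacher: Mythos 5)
Your overall strategy is the same as the paper's (compute $\rho_{\HH}$, $\rho_{\GG}$ for the common maximal torus in $\SO_5(\C)$ and compare the piecewise-linear norms), and your $K_1$ calculation comes out essentially right, but the $AM$ calculation contains a genuine factor-of-two error that you notice and never resolve, so the proof does not close.

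Concretely: the positive root of $\SO_3(\C)$ on its maximal torus $\diag(t,t^{-1},1)$ is $e$, the character $t \mapsto t$, \emph{not} $2e$. (The ``$2\epsilon$'' is the $\mathfrak{sl}_2$ normalization, where the torus is $\diag(s,s^{-1})$ acting on the adjoint with eigenvalue $s^2$; for $\SO_3$ itself the adjoint \emph{is} the standard 3-dimensional representation, giving root $e$.) Hence for $\HH_\C \isom \SO_2 \times \SO_3$ one has $\rho_{AM} = \tfrac12 e_2$, not $e_2$. With your inflated $\rho_{AM} = e_2$ the temperedness inequality becomes $|x_2| \le \tfrac34\max(|x_1|,|x_2|) + \tfrac14\min(|x_1|,|x_2|)$, which fails when $|x_2| > |x_1|$ --- precisely the failure you keep running into (``that still fails'', ``too big''). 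Once you replace $\rho_{AM}$ by $\tfrac12 e_2$, the bound $\tfrac12|x_2| \le \tfrac12\norm{X}_\GG$ holds comfortably, and the case-(b) condition $\max_{w\in W_\GG}\norm{w.X}_{AM} = \tfrac12\max(|x_1|,|x_2|) < \tfrac12\norm{X}_\GG$ holds for every nonzero $X$, so the argument finishes immediately. For $K_1$ you make the analogous ``$2f_1$'' slip but it cancels against the $\tfrac12$ in the half-sum, so $\rho_{K_1} = \tfrac12(e_1+e_2)$ comes out correct; there $\max_{w\in W_\GG}\norm{w.X}_{K_1} = \tfrac12(|x_1|+|x_2|) \le \tfrac12\norm{X}_\GG$ with strict inequality as soon as $|x_1| \ne |x_2|$, which is the witness required for clause (b). In short: your route is exactly the paper's, but the $AM$ case is not actually established in your write-up; the stated obstacle (``getting the torus normalization right'') is real and is resolved by the single correction $\rho_{AM} = \tfrac12 e_2$.
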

\begin{proof}
Let $\HH,\HH'\subset\GG$ be the $\Q$-subgroups such that $\HH(\R) = AM$,
$\HH'(\R)\isom K_1$.  Then $\HH(\C)$ and $\HH'(\C)$ are both isomorphic
to $S(\gO(2,\C)\times\gO(3,\C))$ but are not conjugate in $\GG(\C)$.

Concretely we can realize $\GG(\CC)$ as the set of $g\in\SL_5(\C)$ so that
$$
g\begin{pmatrix}J&0&0\\0&J&0\\0&0&1\end{pmatrix}g^T=\begin{pmatrix}J&0&0\\0&J&0\\0&0&1\end{pmatrix},
$$
where $J=\begin{pmatrix}0&1\\1&0\end{pmatrix}$, then $\HH(\CC)$ is the
obvious subgroup of block-diagonal matrices with blocks of size $2$ and $3$.

In this representation all three groups share the maximal torus
$$\bSS = \TT = \{ \diag(s,s^{-1},t,t^{-1}) \vert s,t\in\C^\times\}\,,$$
and we write $e_1,e_2$ for the characters of this torus given by taking the
values $s$, $t$ respectively.  The positive roots of $\GG_\C$ are then
$\{e_1 \pm e_2\} \cup \{e_1,e_2\}$ of which $\{e_2\}$ is the positive root
of $\HH_\C$ (note that $\SO(2)$ is a torus), while $e_1+e_2$ is
the positive root of $\HH'_\C$.  We then have
$\rho_\HH = \frac12 e_2$, $\rho_{\HH'} = \frac12e_1 + \frac12e_2$,
and $\rho_\GG = \frac32 e_1 + \frac12 e_2$.

The absolute Weyl group of $\GG$ is the signed permutation group
$W=S_2\ltimes C_2^2$ acting on $X^*(\TT)\isom \Z e_1\oplus\Z e_2$
by swapping the two coordinates and reversing signs in
each coordinate independently.  The absolute Weyl group of both subgroups
is $C_2$, reversing the sign of the respective root.

Now let $X=(X_1,X_2) \in \lietr$ so that 
$\langle X,\rho_\HH\rangle = \frac12 X_2$,
$\langle X,\rho_{\HH'}\rangle = \frac12 X_1+\frac12 X_2$, and
$\langle X,\rho_\GG\rangle = \frac32 X_1 + \frac12 X_2$.
Acting by $W$ we may assume $X_1\geq X_2\geq 0$ and then
$\max_{w\in W} \norm{w.X}_\HH = \frac12 X_1$, and
$\max_{w\in W} \norm{w.X}_\HH = \frac12(X_1+X_2)$, and
$\norm{X}_\GG = \frac32 X_1 + \frac12 X_2$.

For $\HH$ we immediately obtain $\norm{X}_\GG>2\norm{X}_\HH$ for all
non-zero $X$.  For $\HH'$ we see
$\frac32 X_1 + \frac12 X_2 =  X_1 + \frac12(X_1+X_2) \geq 2\cdot \frac12(X_1+X_2)$,
and that the inequality is strict if $X_1>X_2$.
\end{proof}

\section{Few exceptional returns}\label{few-sec}
In this section we prove \ref{cond3}.  We will show that the set of $x$ as
in the condition is contained in countably many submanifolds of the form
$L=\Gamma gHM$, with $H$ a certain copy of $\SO(1,2)$ in $\SO(1,4)$ --
the centralizer of a compact torus in $G$, to be described below.
We will show that such $L$, as well as any of its subvarieties, has
small stabilizers, at which point the claim will follow from \cref{machine}.

For future reference we analyze the stabilizers of analogous subvarieties
of $G=\SO(1,n)$ for general $n$.

\subsection{Block matrices conventions}\label{bmconv}
Let $I_n$ be the $n\times n$ identity matrix, $J=\begin{pmatrix}0&1\\1&0\end{pmatrix}$.
Realize $G=\SO(1,n)$ as a $\bar{\Q}\cap \R$-group as the special isometry
group of the quadratic form
$$
Q=2x_0x_1+x_2^2+\dots+x_n^2. 
$$
Concretely $G$ is then the group of $g\in M_{n+1}(\RR)$
such that
$$g\begin{pmatrix}J&0\\0&I_{n-1}\end{pmatrix}g^t
= \begin{pmatrix}J&0\\0&I_{n-1}\end{pmatrix}\,.$$
We will write elements of $G$ and its lie algebra $\lieg$ as block matrices
corresponding to the decomposition of the quadratic space as a sum of
a hyperbolic space and an $n-1$-dimensional definite space.
In particular $\lieg$ the consists of those $X\in M_{n+1}(\R)$ such that
\begin{equation}\label{4blocks}
X=\begin{pmatrix}
a&b\\c&d
\end{pmatrix}\,,
\end{equation}
where $a\in\Span_\R(\diag(1,-1))\subset M_2(\R)$,
$b\in M_{2\times{n-1}}(\R)$ is arbitrary, $c=-b^tJ$, and
$d=-d^t\in M_{n-1}(\R)$.
We let $M$ denote the copy of $\SO(n-1)$ in the lower right corner and
$\frakm$ its Lie algebra; in the coordinates above it consists of those
matrices where $a,b,c$ all vanish.
Let $N$ denote the (abelian) unipotent group whose Lie algebra $\frakn$
consists of elements of the form 
\begin{equation}\label{typicaln}
X=\begin{pmatrix}
0&b\\c&0
\end{pmatrix},
\end{equation}
where $b$ is any matrix of the form
$b=\begin{pmatrix}R\\0\end{pmatrix}\in M_{2\times n-1}(\R)$
(thus determining $c$ by to the relation above).
We will denote the matrix $X$ in \eqref{typicaln} by 
\begin{equation}\label{typicaln2}
X=X(R).
\end{equation} 
 
Let $\fraka$ denote the Cartan subalgebra 
$\fraka=\begin{pmatrix}
*&0\\0&0
\end{pmatrix},$ writing $A$ for the corresponding real Cartan subgroup of $G$.
Finally we let $\frakk$ denote the intersection of $\frakg$ with the algebra of antisymmetric matrices in $M_{n+1}(\R)$ and $K\subset G$ the corresponding maximal compact subgroup (isomorphic to $\gO(n)$). 
Then we have the Iwasawa decomposition $G=KAN$. 

To introduce coordinates on the group $K$ let $q$ be the quadratic form
$q=-x_0^2+x_1^2+x_2^2+\dots+x_n^2$ which also has signature $(1,n)$.  The
quadratic spaces $(\R^{n+1},q)$ and $(\R^{n+1},Q)$ are then isomorphic by the
map
$(\R^{n+1},q)\ni v \mapsto \begin{pmatrix}E&0\\0&I_{n-1}\end{pmatrix}v\in (\R^{n+1},Q)$, 
where $E=\frac{1}{\sqrt{2}}\begin{pmatrix}-1&1\\1&1\end{pmatrix}$. 
This induces an isomorphism $T:\SO(q)\to \SO(Q)$ given by
\begin{equation}\label{isomT}
g\mapsto \begin{pmatrix}E&0\\0&I_{n-1}\end{pmatrix}g\begin{pmatrix}E&0\\0&I_{n-1}\end{pmatrix}, 
\end{equation}
since $E=E^{-1}$.

Writing $(\R^{n+1},q)$ as the direct sum of a space of signature $(1,0)$ and
a space of signature $(0,n)$ gives an embedding of $\gO(n)$ in $\SO(q)$ arising
from the isometry group of $\R^n$.  It is not hard to check that its pullback
in $\SO(Q)$ via $T$ is exactly $K$, realizing $K$ as the group of matrices
of the form 
\begin{equation}\label{typicalk}
\begin{pmatrix}
E\begin{pmatrix}1&0\\0&x_1\end{pmatrix}E & E\begin{pmatrix}0\\ u\end{pmatrix}\\
\begin{pmatrix}0& v^t\end{pmatrix}E & m
\end{pmatrix},
\end{equation}
where $x_1\in \R$, $u,v\in M_{1\times n-1}(\R)$ and $m\in M_{n-1\times n-1}$ are such that  
$\begin{pmatrix}x_1&u\\v^t&m\end{pmatrix}\in \gO(n)$. 

We shall also use a refinement of the representation \eqref{4blocks} above
and write $b=\begin{pmatrix} b_1&b_2  \end{pmatrix}$, 
$c=\begin{pmatrix} c_1\\c_2  \end{pmatrix}$, and
$d=\begin{pmatrix} d_1&d_2\\d_3&d_4  \end{pmatrix}$,
where $b_1,c_1,d_1\in M_2(\R)$, representing $g\in G$ in the form
\begin{equation}\label{9blocks}
\begin{pmatrix}
a&b_1&b_2\\c_1&d_1&d_2\\c_2&d_3&d_4
\end{pmatrix}\,,
\end{equation} 
subjected to the relations above. 

Using this convention let $H$ denote the copy of $\SO(1,n-2)$ in $G$ whose Lie
algebra is
$$
\frakh=\begin{pmatrix}
*&0&*\\0&0&0\\**&0&*
\end{pmatrix}.
$$

\subsection{Unipotent subgroups of $HM$}
We bound the stabilizers of subvarieties of $gHM$ by studying their
actions on unipotent subgroups in $HM$.  In this section we determine
those subgroups.

\begin{lem}\label{lemcase1}
For each $g\in G$ either the linear transformation
$\calR:\frakn \to \R^{n-1}$ given by mapping 
an element $X\in \frakn$ to the last $n-1$ coordinates of the first row of $gXg^{-1}$ is invertible, or the 
linear transformation $\calS:\frakn \to \R^{n-1}$ given by mapping 
an element $X\in \frakn$ to the last $n-1$ coordinates of the second row of $gXg^{-1}$ is invertible. 
\end{lem}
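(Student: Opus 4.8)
The plan is to identify the maps $\calR$ and $\calS$ with projections of two one-parameter families of vectors, and then read off invertibility from a single orthogonality relation for the form $Q$. Write $e_0,\dots,e_n$ for the standard basis, so that $Q=2x_0x_1+x_2^2+\dots+x_n^2$; let $\Phi=\begin{pmatrix}J&0\\0&I_{n-1}\end{pmatrix}$ be its Gram matrix, let $P\colon\R^{n+1}\to\R^{n-1}$ be the projection onto the last $n-1$ coordinates (so $\ker P=\Span(e_0,e_1)$), and set $V=\Span(e_2,\dots,e_n)$, on which $Q$ is positive definite. First I would observe that for $Y\in\lieg$ and $j\ge 2$ one has $Y_{0j}=-(Ye_1)_j$ and $Y_{1j}=-(Ye_0)_j$: this follows from $e_0=\Phi e_1$, $e_1=\Phi e_0$, $\Phi e_j=e_j$, and the skew relation $\Phi Y=-Y^t\Phi$. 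Applying this with $Y=gXg^{-1}$ gives $\calR(X)=-P\bigl(gX(g^{-1}e_1)\bigr)$ and $\calS(X)=-P\bigl(gX(g^{-1}e_0)\bigr)$. Since $\frakn$ has dimension $n-1$, equal to that of $\R^{n-1}$, each of $\calR,\calS$ is invertible iff it is injective, i.e.\ iff $X(g^{-1}e_1)\notin W$, resp.\ $X(g^{-1}e_0)\notin W$, for every $0\ne X\in\frakn$, where $W:=\Span(g^{-1}e_0,g^{-1}e_1)=g^{-1}\ker P$.

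The core step is then the following. Fix $0\ne X\in\frakn$ and abbreviate $f=g^{-1}e_1$; write $f=ae_0+be_1+w$ with $w\in V$, so that $b=Q(f,e_0)=Q(ge_0,e_1)$. Using the explicit action of $\frakn$ coming from \eqref{typicaln} — namely $X=X(R)$ annihilates $e_0$, sends $e_1$ to $-R'$ (with $R'\in V$ the vector of entries of $R$), and sends $v'\in V$ to $Q(v',R')e_0$ — one computes $Xf=-bR'+Q(w,R')e_0$ and hence $Q(Xf)=b^2Q(R')=b^2\|R'\|^2$. Now suppose $Xf\in W$. On one hand $Q(Xf,f)=0$ because $X$ is $Q$-skew; on the other hand $W$ is a hyperbolic plane in which $f$ spans one of the two isotropic lines and $Q(g^{-1}e_0,f)=Q(e_0,e_1)=1\ne 0$, so $W\cap f^{\perp_Q}=\R f$. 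Therefore $Xf\in\R f$, giving $Q(Xf)=0$, i.e.\ $b^2\|R'\|^2=0$; if $b\ne 0$ this forces $R'=0$ and hence $X=0$. Thus $\calR$ is invertible whenever $Q(ge_0,e_1)\ne 0$, and the identical argument with $e_0$ in place of $e_1$ shows $\calS$ is invertible whenever $Q(ge_0,e_0)\ne 0$.

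To finish I would note that $Q(ge_0,e_0)$ and $Q(ge_0,e_1)$ cannot both vanish: if they did, $ge_0$ would lie in $\Span(e_0,e_1)^{\perp}=V$, yet $Q(ge_0)=Q(e_0)=0$ while $Q$ is positive definite on $V$, forcing $ge_0=0$ and contradicting invertibility of $g$. Hence at least one of the two scalars is nonzero, and by the previous paragraph the corresponding map ($\calR$ or $\calS$) is invertible. I do not expect a serious obstacle: essentially everything rests on the reformulation in the first paragraph, and once the rows of $gXg^{-1}$ are rewritten in terms of $gX(g^{-1}e_i)$ the statement is just the remark that a nonzero element of $\frakn$ cannot move the isotropic vector $g^{-1}e_i$ within the hyperbolic plane it helps span. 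The only delicate point is getting the skew-symmetry bookkeeping — the signs and the roles of $e_0$ versus $e_1$ — exactly right.
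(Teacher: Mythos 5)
Your proof is correct, but it takes a genuinely different route from the paper's. The paper reduces to $g\in K$ via the Iwasawa decomposition $G=KAN$ (using that $N$ is abelian and $A$ acts on $\frakn$ by scalars, so conjugation by $a n$ only rescales $\calR,\calS$), and then performs an explicit block-matrix computation using the parametrization \eqref{typicalk} of $K$, obtaining \eqref{subr1}--\eqref{subr2} and finishing by a short case analysis on whether $u=0$. You instead work with arbitrary $g$ directly: the key move is to use the $Q$-skewness relation $Y^t = -\Phi Y\Phi$ to rewrite the first (resp.\ second) row extraction as $\calR(X)=-P\bigl(gX(g^{-1}e_1)\bigr)$, resp.\ $\calS(X)=-P\bigl(gX(g^{-1}e_0)\bigr)$, and then to observe that since $\dim\frakn=n-1$ invertibility is just injectivity, which you control via the quadratic form: a nonzero $X\in\frakn$ sends $f=g^{-1}e_i$ into $W=g^{-1}\Span(e_0,e_1)$ only if $Q(Xf)=0$, and the explicit action of $\frakn$ gives $Q(Xf)=b^2\|R'\|^2$ with $b=Q(ge_0,e_i)$, which cannot vanish for both $i=0,1$ because $ge_0$ is isotropic and $Q$ is definite on $\Span(e_0,e_1)^\perp$. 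I have checked the bookkeeping (the skew-symmetry identities $Y_{0j}=-(Ye_1)_j$, $Y_{1j}=-(Ye_0)_j$ for $j\geq 2$, the formulas for the action of $X(R)$ on $e_0,e_1,V$, and the computation $W\cap f^{\perp_Q}=\R f$) and it all works. Your argument is more conceptual and avoids the Iwasawa reduction and the coordinate computation in $K$; the paper's version is more computational but makes \eqref{subr1}--\eqref{subr2} available for possible later reference. Both are complete proofs.
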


\begin{proof}
Since $N$ is commutative and $A$ acts on $\lien$ by scalar multiplication,
and since $G=KAN$, it suffices to consider the case of $g\in K$.
Those elements have the form
$$
\begin{pmatrix}
H\begin{pmatrix}1&0\\0&x_1\end{pmatrix}E & E\begin{pmatrix}0\\ u\end{pmatrix}\\
\begin{pmatrix}0& v^t\end{pmatrix}E & m
\end{pmatrix},
$$
with $x_1,u,v,m$ as in \eqref{typicalk}. 
For $X=X(R)$ (see \eqref{typicaln2}), a direct computation shows that 
\begin{align}
2\calR X &=( 1+x_1)Rm^t -\left<u,R\right>v \,, \label{subr1}\\
2\calS X &=(-1+x_1)Rm^t -\left<u,R\right>v\,. \label{subr2}
\end{align}
Suppose first that $u=0$ (which forces $v=0$ as well).
Then $m$ is invertible, at least one of $x_1\pm 1$ is non-zero, and it
follows that either $\calR$ or $\calS$ is invertible as claimed.

Otherwise $u\neq 0$ which forces $x_1\ne\pm1$ too.  Taking $R=u$ we get
$\lambda_1,\lambda_2\in\R$ not both zero so that
\begin{align}
2\calR X(u) &= -( 1+x_1)x_1v-\norm{u}^2v =\lambda_1v \,, \label{subu1}\\
2\calS X(u) &= -(-1+x_1)x_1v-\norm{u}^2v =\lambda_2v \,. \label{subu2}
\end{align}
Thus either the image of $\calR$ or the image of $\calS$ contains $v$,
and then this span also contains the span of all the rows of $m^t$.
Since that has dimension $n-1$ we are done.  
\end{proof}

\begin{prop}\label{lem:stable-unipotent}
Let $U$ be a unipotent subgroup of $G$ (i.e. a conjugate of a closed
subgroup of $N$). If $U$ is contained in $HM$ then $\dim U \leq n-3$.
\end{prop}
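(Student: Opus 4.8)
The plan is to reduce the statement to a short linear‑algebra computation on $\frakn$ and then feed it into \cref{lemcase1}. The first step is to extract the infinitesimal consequence of $U\subset HM$. The set $HM$ is the orbit of the identity under the action of $H\times M$ on $G$ given by $(h,m)\cdot g = hgm^{-1}$; its stabilizer at $e$ is the closed subgroup $\{(h,h)\mid h\in H\cap M\}$, so $HM$ is an (initial) immersed submanifold of $G$ through $e$ with tangent space $T_e(HM)=\frakh+\frakm$. Since $U$ is a connected subgroup of $G$ contained in $HM$ and passing through $e$, the inclusion $U\hookrightarrow G$ factors smoothly through $HM$, and hence $\Lie(U)\subseteq\frakh+\frakm$. (An alternative route avoiding submanifold language: for each $X\in\Lie(U)$ the curve $t\mapsto \exp(tX)$ stays in the \emph{closed} set $HM$; lift it locally through the orbit map $H\times M\to G$, which has constant rank by homogeneity, and differentiate at $t=0$ to get $X\in\frakh+\frakm$ directly.)

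Next I would identify $\frakh+\frakm$ concretely in the $9$-block notation of \eqref{9blocks}. Both $\frakh$ and $\frakm$ have vanishing $(1,2)$-block $b_1$ (and hence, via $c=-b^tJ$, vanishing $(2,1)$-block $c_1$), and a dimension count shows conversely that $\frakh+\frakm=\{Y\in\frakg\mid b_1(Y)=0\}$. Now use that $U$ is unipotent: by hypothesis $U=gN_0g^{-1}$ for some $g\in G$ and a closed subgroup $N_0\subseteq N$; writing $\frakn_0=\Lie(N_0)\subseteq\frakn$ we have $\dim U=\dim\frakn_0$ and $\Lie(U)=\Ad(g)\frakn_0$. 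Combining with the previous step, for every $X\in\frakn_0$ the matrix $gXg^{-1}$ lies in $\frakh+\frakm$, i.e. its $(1,2)$-block vanishes; in the notation of \cref{lemcase1} this says exactly that the first two coordinates of both $\calR(X)$ and $\calS(X)$ vanish.

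Finally I would invoke \cref{lemcase1}: for our $g$, at least one of the linear maps $\calR|_\frakn,\calS|_\frakn\colon\frakn\to\R^{n-1}$ is invertible; say it is $\calR|_\frakn$ (the case of $\calS$ is identical, using the vanishing of the first two coordinates of $\calS(X)$). Composing with the projection $\pi\colon\R^{n-1}\to\R^{2}$ onto the first two coordinates yields a surjection $\pi\circ\calR|_\frakn\colon\frakn\to\R^{2}$ whose kernel has dimension $(n-1)-2=n-3$, and $\frakn_0$ is contained in this kernel. Hence $\dim U=\dim\frakn_0\le n-3$, as claimed.

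The genuine content of the argument is \cref{lemcase1}, which is already available; given it, the only point requiring a little care is the first step, namely that $U\subset HM$ forces $\Lie(U)\subseteq\frakh+\frakm$, since $HM$ is in general not a subgroup but only an (initial) immersed submanifold. Everything after that is the bookkeeping with the block form and a one-line rank computation.
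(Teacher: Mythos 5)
Your proof is correct and follows the route the paper intends: reduce to $\Lie(U)\subseteq\frakh+\frakm$, note that this forces the $(1,2)$-block $b_1$ of $\Ad(g)X$ to vanish for $X\in\frakn_0$, and then use \cref{lemcase1} to see that the resulting two linear conditions (the first row, say, of $b_1$) are independent on $\frakn$, cutting $\dim\frakn_0$ down to at most $n-3$. The paper's own proof is just a compressed three-sentence version of the same argument; it does not explicitly cite \cref{lemcase1} (and even has a small typo, writing $\liem$ where it must mean $\lieh\oplus\liem$), but that lemma is stated immediately beforehand in the same subsection precisely to supply the rank bound you spelled out. Your treatment of the first step — why $U\subset HM$ forces $\Lie(U)\subset\frakh+\frakm$ — is more careful than the paper's bald assertion $\Lie U\subset T_e HM=\lieh\oplus\liem$; the observation that $M$ is compact so $HM$ is actually a closed embedded submanifold (not merely an initial immersed one) would let you shorten that part even further.
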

\begin{proof}
We have $\Lie U \subset T_e HM = \lieh \oplus \liem$, so it suffices
to show that the maximal subspaces of $\lieh\oplus\liem$ which are
$\Ad(G)$-conjugate to subspaces of $\lien$ are at most $n-3$-dimensional.
Thus let $g\in G$ and $W\subset \lien$ be such that
$gWg^{-1} \subset \lieh\oplus\liem$.  Any element of $\liem$ has the form
$X=\begin{pmatrix}
*&0&*\\0&*&*\\**&*&*
\end{pmatrix}$, so the image of $W$ is at most $n-3$-dimensional.
\end{proof}

\subsection{Proof of condition \ref{cond3}}
We now return to the case of $n=4$ where $G\isom\SO(1,4)$,
$M\isom\gO(3)$, and $H\isom\SO(2,1)$.
\begin{lem}\label{zlem2}
Suppose that $\GG$ is a $\Q$-group and $\HH,\bLL$ are
$\Qbar\cap\R$-subgroups of $\GG$ with groups of real points $G,H,L$
receptively.  Let $g\in G$ and suppose that $gLg^{-1}=H$.
Then there exists $g'\in\GG(\Qbar\cap\R)$ such that 
$g\in g'N_G(L)$. 
\end{lem}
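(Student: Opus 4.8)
The plan is to realize the set of elements conjugating $L$ to $H$ as the rational points of an algebraic subvariety of $\GG$ defined over $k:=\Qbar\cap\R$, and then to descend rationality from $\R$ down to $k$ using the model completeness of real closed fields. What makes the descent work is that $k$ is a real closed field — it is the real closure of $\Q$ — and that $\R$ is a real closed field containing it, so by Tarski's quantifier-elimination theorem the embedding $k\subset\R$ is elementary; in particular every existential first-order statement with parameters in $k$ that holds in $\R$ already holds in $k$.

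Concretely I would consider the Lie-algebra transporter
\[
T \;=\; \{\, x\in\GG \;:\; \Ad(x)\Lie(\bLL)=\Lie(\HH) \,\}.
\]
Fixing a basis of $\Lie(\bLL)$ and extending a basis of $\Lie(\HH)$ to a basis of the Lie algebra of $\GG$, all over $k$, and using the already-known equality $\dim\bLL=\dim\HH$ (so that an inclusion of subspaces of the same dimension is an equality), membership $x\in T$ is cut out inside $\GG$ — itself a $\Q$-subvariety of an affine space — by finitely many polynomial equations with coefficients in $k$; hence the existence of a $k$-rational point of $T$ is expressible by an existential first-order sentence over $k$. Moreover, for any field $F\supseteq k$ and $x\in\GG(F)$ one has $x\in T(F)$ if and only if $x\bLL_F x^{-1}=\HH_F$: this is clear in one direction, and in the other it uses that $\bLL$ and $\HH$ are connected (they are special orthogonal groups) and that in characteristic zero a connected subgroup is determined by its Lie algebra. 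In particular the hypothesis $gLg^{-1}=H$ yields $\Ad(g)\Lie(L)=\Lie(H)$, i.e. $g\in T(\R)$, so the sentence holds in $\R$.

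By the transfer principle the sentence then holds in $k$: there is $g'\in\GG(\Qbar\cap\R)$ with $\Ad(g')\Lie(\bLL)=\Lie(\HH)$, hence $g'\bLL g'^{-1}=\HH$, and so $g'Lg'^{-1}=H$ on real points. Then
\[
(g'^{-1}g)\,L\,(g'^{-1}g)^{-1}=g'^{-1}(gLg^{-1})g'=g'^{-1}Hg'=L,
\]
so $g'^{-1}g\in N_G(L)$, and therefore $g=g'(g'^{-1}g)\in g'\,N_G(L)$, as required.

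The step I expect to demand the most care is the packaging of the conjugacy relation as an honest polynomial system over $\Qbar\cap\R$ — this is what forces the passage to the Lie-algebra transporter (so that the defining conditions are visibly polynomial in the entries of $\Ad(x)$) and the use of the equal-dimensions remark, and it is also where connectedness of $\bLL$ and $\HH$ enters. Once that is set up, the descent of rationality is a soft consequence of $\Qbar\cap\R$ being real closed, and in particular requires no Galois-cohomology computation and no direct analysis of the normalizer $N_G(L)$.
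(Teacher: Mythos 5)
Your proof is correct and takes essentially the same route as the paper: realize the transporter as an algebraic condition over $\Qbar\cap\R$, invoke model completeness of real closed fields to descend the existence of a conjugating element from $\R$ to $\Qbar\cap\R$, and then observe that $g'^{-1}g \in N_G(L)$. You are more explicit than the paper in encoding the conjugacy condition as a first-order statement via the Lie-algebra transporter; note, though, that your deduction $\Ad(g')\Lie(\bLL)=\Lie(\HH)\Rightarrow g'\bLL g'^{-1}=\HH$ quietly uses that $\HH$ and $\bLL$ are connected, a hypothesis not stated in the lemma (your parenthetical ``they are special orthogonal groups'' is imported from the application context, not the lemma itself) — though the paper's own terse proof glosses over the same point.
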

\begin{proof}
The theory of real closed fields eliminates quantifiers, so it is
model-complete. Thus the existential statement of conjugacy over $\R$
implies the existential statement (conjugacy) in the submodel $\Qbar\cap\R$.
Having $g'\in\GG(\Qbar\cap\R)$ with $g'Lg'^{-1}=H=gLg^{-1}$ means that
$g\in G'N_G(L)$.
\end{proof}

\begin{lem}\label{zlem1}
Let $g\in G$, and suppose that the intersection
$\GG(\Qbar\cap\R)\cap gMg^{-1}$ is infinite.
Then there exists $g'\in\GG(\Qbar\cap\R)$ so that $g\in g'HM$. 
\end{lem}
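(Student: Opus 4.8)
The plan is to locate a one-dimensional compact torus hidden inside $gMg^{-1}$, pin $g$ down up to the normalizer of a standard such torus $T'\subset M$, and then check that this normalizer lies in $HM$. First I would form $\bN$, the Zariski closure in $\GG$ of the group $\GG(\Qbar\cap\R)\cap gMg^{-1}$. Since $\Qbar\cap\R$ is perfect this is a $\Qbar\cap\R$-subgroup of $\GG$, and it is positive-dimensional because the intersection is infinite. Its identity component $\bN^{0}$ is again defined over $\Qbar\cap\R$, its real points form a connected compact Lie group of positive dimension (a closed subgroup of the compact group $gMg^{-1}$), so $\bN^{0}$ is reductive (compact real points rule out a unipotent radical) of positive rank. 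Choose a maximal torus $\bS\subset\bN^{0}$ defined over $\Qbar\cap\R$ (possible since the field is infinite). Then $\bS$ is $\R$-anisotropic, and as $\bS(\R)$ embeds into $gMg^{-1}\isom\gO(3)$, which has rank $1$, we get $\dim\bS=1$ and $\bS(\R)\isom\SO(2)$.

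Next, $g^{-1}\bS(\R)g$ is a one-dimensional compact torus of $M$, hence a maximal torus of $M^{0}\isom\SO(3)$, and all of these are conjugate in $M^{0}$: there is $m\in M^{0}$ with $(gm)^{-1}\bS(\R)(gm)=T'$, where $T'\subset M$ is the standard compact torus (the $\SO(2)$ rotating the coordinate plane $\langle x_2,x_3\rangle$). Since $HM$ is right-invariant under $M$, replacing $g$ by $gm$ does not affect the conclusion, and after this replacement $g\,\overline{T'}(\R)\,g^{-1}=\bS(\R)$, where $\overline{T'}$ is the (rational) Zariski closure of $T'$. \Cref{zlem2}, applied with the $\Qbar\cap\R$-subgroups $\overline{T'}$ and $\bS$, then yields $g'\in\GG(\Qbar\cap\R)$ with $g\in g'N_G(T')$.

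It remains to show $N_G(T')\subset HM$, which is the one genuinely computational step. Since $T'$ rotates the plane $\langle x_2,x_3\rangle$ and acts trivially on its $Q$-orthogonal complement $\langle x_0,x_1,x_4\rangle$, any $\nu\in N_G(T')$ preserves both subspaces; write $\nu|_{\langle x_2,x_3\rangle}=\sigma\in\gO(2)$ and $\nu|_{\langle x_0,x_1,x_4\rangle}=B\in\gO(1,2)$, so that $\det\sigma\cdot\det B=1$, and put $\delta=\det\sigma$. Then $\nu=hm$, where $m\in M^{0}\isom\SO(3)$ acts on $\langle x_2,x_3,x_4\rangle$ by the block-diagonal matrix $\mathrm{diag}(\sigma,\delta)$ and fixes $\langle x_0,x_1\rangle$, and $h\in H\isom\SO(1,2)$ fixes $\langle x_2,x_3\rangle$ and acts on $\langle x_0,x_1,x_4\rangle$ by $B\cdot\mathrm{diag}(1,1,\delta)$, which has determinant $\delta^{2}=1$ and so genuinely lies in $H$; a check on basis vectors confirms $hm=\nu$. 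Hence $g\in g'N_G(T')\subset g'HM$, as required.

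The conceptual skeleton — pass to the Zariski closure, extract an anisotropic torus, and descend the conjugacy over $\R$ to one over $\Qbar\cap\R$ via the model-completeness of real closed fields (\cref{zlem2}) — is routine. I expect the main obstacle to be the structural identification $N_G(T')\subset HM$: one must keep track of the two connected components of $\SO(1,2)$, the determinant constraint coupling the two diagonal blocks, and the fact that $H$ and $M$ share the coordinate $x_4$, so as to be sure the decomposition $\nu=hm$ above really exhausts the normalizer. A secondary point requiring care is checking that $\bS$ can be chosen over $\Qbar\cap\R$ and that the rank-one structure of $M$ forces $\dim\bS=1$, which is exactly what makes \cref{zlem2} applicable.
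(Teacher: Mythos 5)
Your proof is correct, and its skeleton — Zariski closure of the rational intersection, descent of the real conjugacy to a $\Qbar\cap\R$-conjugacy via \cref{zlem2}, then a normalizer computation — is the same as the paper's. The one genuine difference is that you always pass to a maximal $\Qbar\cap\R$-torus $\bS$ of the (connected, reductive) Zariski closure and conjugate it to the single standard torus $T'=M_1$, so the only normalizer you ever need is $N_G(T')$. The paper instead classifies the possible Zariski closures inside $M\isom\gO(3)$ (a torus, the normalizer of a torus, or $M$ itself) and invokes $N_G(M_1)\subset HM$ and $N_G(M)\subset HM$ separately; your reduction dispenses with the second case entirely. This buys a cleaner argument, and your explicit computation of the normalizer is also more careful: you correctly identify $N_G(T')=S(\gO(2)\times\gO(1,2))$ and exhibit the factorization $\nu=hm$ with $h\in H$, $m\in M$ (with the determinant twist by $\delta$ on $x_4$), whereas the paper asserts the equality $N_G(M_1)=H$, which is not literally correct — $N_G(M_1)$ has a nontrivial $\gO(2)$-component on $\langle x_2,x_3\rangle$ that $H$ lacks — though the inclusion in $HM$, which is all that is used, still holds. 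Two small inaccuracies in your write-up, neither affecting the argument: a maximal torus defined over the base field exists for reductive groups over any field (Grothendieck), not just infinite ones; and $\bN^0(\R)$ need not be connected as a Lie group even though $\bN^0$ is a connected $\Qbar\cap\R$-group — but you only use that it is compact and positive-dimensional, which is what you actually need for reductivity and rank $\ge 1$.
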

\begin{proof}
Let $S\subset G$ denote the Zariski closure of $\GG(\Qbar\cap\R)\cap gMg^{-1}$.

The proper closed connected subgroups of $\SO(3)$ are all tori, so the
infinite closed subgroups of $M$ are, up to conjugacy, $M$ itself, the tori,
and the normalizers of tori, and that the normalizer of any closed subgroup
must normalize the connected component.
We write $M_1$ for a particular torus, the copy of $\SO(2)$ in the upper
left corner of $M$, observing that it (and its extension by its Weyl
group) is defined over $\Q$.  Finally $N_G(M_1) = H$, $N_G(M)=AM$ and both
are contained in $HM$.

Since $g^{-1}Sg$ is conjugate in $M$ to a closed subgroup, $S$ is the
conjugate some $gm$ ($m\in M$) to one of our "standard" closed subgroups,
which are defined over $\Q$, so $gm$ is in an algebraic coset of the
normalizer and we obtain $g'\in\GG(\Qbar\cap\R)$ such that $gm\in g'HM$
hence that $g\in g'HM$.
\end{proof}

We keep the notation $M_1$ for that particular maximal torus of $M$.
Let $U$ denote the $1$-dimensional unipotent subgroup of $G$ whose Lie
algebra is generated by $X[(0,0,1)]$ (see \eqref{typicaln2}). 

\begin{lem}\label{real-stabilizer}
Let $\bSS$ be a connected real algebraic subgroup of $\GG$ and
$S = \bSS(\R)\subset G$ its group of real points. Suppose that $S\subset HM$. 
Then $S$ is conjugate in $G$ to a subgroup of one of the following:
\begin{enumerate}
\item A proper subgroup of $K$.
\item $AM$
\item $H M_1$
\end{enumerate}
\end{lem}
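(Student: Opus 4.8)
The plan is to classify, up to conjugacy in $G$, the connected real algebraic subgroups $S \subset HM$ by the structure of their Levi decomposition $S = S_u \rtimes S_r$ (unipotent radical times a reductive complement), using \propref{lem:stable-unipotent} to bound the unipotent part and a direct analysis of reductive subgroups of $HM \isom \SO(2,1)\times\gO(3)$ to handle the reductive part. First I would dispose of the case where $S$ is \emph{not} contained in a conjugate of $K$: then $S$ contains a nontrivial element $a$ with nontrivial hyperbolic component, and by projecting to the $\SO(2,1)$-factor and using that $\SO(2,1)$ has real rank $1$ I can produce inside $S$ (after conjugating) either a split torus (giving a subgroup of $AM$, since the centralizer of a regular element of $A$ is $AM$), or a nontrivial unipotent subgroup $U' \subset S$. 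In the latter case \propref{lem:stable-unipotent} with $n=4$ forces $\dim U' \le 1$, so $U'$ is a line; its normalizer inside $\SO(2,1)\times\gO(3)$ is (conjugate to) the minimal parabolic $P = MAN$ of $\SO(2,1)$ times $\gO(3)$, and since $S$ is reductive or contains $U'$ normally one reads off that $S$ is conjugate into $AM$ (if $S\supset U'$ normally with reductive complement centralizing $U'$, that complement is compact, living in $M\cdot(\text{compact part of }\SO(2,1))$, which is again inside a conjugate of $K$ — contradiction unless $S = U'$-related, landing in $AM$) or into a conjugate of $K$.

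The core case is thus $S$ \emph{reductive} and not contained in a conjugate of $K$: then $S$ has a nontrivial split torus, hence (conjugating) contains a subtorus of $A$; its centralizer is $AM$, so $S \subset N_G(S^\circ\cap A)$-type groups, and a short argument shows $S$ is conjugate into $AM$ — unless $S$ projects onto a conjugate of all of $\SO(2,1)$ inside the $\SO(2,1)$-factor, in which case $S \supset \SO(2,1)'$ for some conjugate, and then $S \subset (\SO(2,1)'\cdot M)$; the maximal such is $HM_1$ after noting $N_{\gO(3)}(\text{anything containing }\SO(2,1)') $ meets $M$ in at most a torus $M_1$ (a proper connected subgroup of $\SO(3)$ is a torus, by the observation already used in the proof of \lemref{zlem1}). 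Finally, if $S$ \emph{is} contained in a conjugate of $K$: either $S$ is a proper subgroup of that conjugate of $K$ (case (1)), or $S$ equals a full conjugate of $K \isom \gO(4)$; but $\gO(4)$ does not embed in $HM \isom \SO(2,1)\times\gO(3)$ (compare dimensions and the compact part: the maximal compact of $\SO(2,1)\times\gO(3)$ is $\SO(2)\times\gO(3)$, which is $4$-dimensional but not isogenous to $\gO(4)\isom$ has $\SO(4)$ simple-ish of dimension $6$), so this subcase is vacuous.

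The main obstacle I anticipate is the bookkeeping in the reductive non-split-free direction: ensuring that when $S$ has a split torus but does \emph{not} contain a conjugate of the whole $\SO(2,1)$-factor, it really is conjugate into $AM$ rather than into some larger intermediate subgroup — i.e. ruling out a subgroup like $AN\cdot(\text{something})$ or a twisted product that still sits in $HM$. This is handled by \propref{lem:stable-unipotent} (no $2$-dimensional unipotents in $HM$) combined with the rank-one structure of $\SO(2,1)$: a reductive subgroup containing a split torus but no full $\SO(2,1)$-factor-image must centralize that torus, hence lie in $C_G(A\text{-torus}) = AM$. A secondary nuisance is getting the $\gO$ versus $\SO$ and the component-group issues right (the statement says "proper subgroup of $K$", where $K\isom\gO(4)$, so a disconnected $S$ lands there automatically as long as it is proper), but these do not affect the logical skeleton. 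I would organize the write-up as: (a) reduce to $S$ containing a specific one-parameter subgroup that is hyperbolic or unipotent or shows $S\subset K^{\text{conj}}$; (b) the unipotent case via \propref{lem:stable-unipotent}; (c) the split-torus case via centralizers; (d) the $\SO(2,1)'$-case via proper-subgroups-of-$\SO(3)$-are-tori; (e) the compact case, noting $\gO(4)\not\hookrightarrow HM$.
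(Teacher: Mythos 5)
Your overall strategy coincides with the paper's: take the Levi decomposition of $\bSS$, bound the unipotent radical via \cref{lem:stable-unipotent}, and then do a case analysis on the reductive part. But there are two substantive errors.

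First, you repeatedly write $HM \isom \SO(2,1)\times\gO(3)$ and argue as if $HM$ were a group with that product structure. It is not: in the coordinates of \cref{bmconv}, $H$ acts on the span of coordinates $0,1,4$ and $M$ acts on the span of $2,3,4$, so they share the fourth coordinate, do not commute, and $H$ does not normalize $M$. Thus $HM$ is only a subvariety (this is how the paper treats it), not a group, and certainly not a direct product. The group $HM_1$, by contrast, is a genuine direct product $\SO(1,2)\times\SO(2)$ because $M_1$ acts on coordinates $2,3$ only. Several of your steps — the ``project to the $\SO(2,1)$-factor'' reductions and the ``maximal compact of $\SO(2,1)\times\gO(3)$'' comparison — quietly use the false product structure.

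Second, and more seriously, your handling of the non-reductive case is wrong. When $S$ has a nontrivial unipotent radical (of dimension one, by \cref{lem:stable-unipotent}), you conclude either a contradiction or that $S$ ``lands in $AM$''. But $AM$ contains no nontrivial unipotent elements (it is a product of a split torus and a compact group), so a group with a nontrivial unipotent radical can never be conjugate into $AM$; and the case is genuinely nonempty — $M_1AU$ is a non-reductive algebraic subgroup of $HM_1$. Your parenthetical argument rests on the reductive complement of $S_u$ centralizing $U'$, but there is no reason for that: in $M_1AU$ the Levi factor $M_1A$ normalizes but does not centralize $U$ (the $A$-part scales it). The correct conclusion for this case is alternative (3), $S$ conjugate into $HM_1$. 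The paper gets there by observing $N_G(U)=M_1AN$, so $S\subset M_1AN$, and then, since $S$ is algebraic, the unipotent Jordan part of each element of $S$ lies in $S$ and hence in $S_u=U$, giving $S\subset M_1AU\subset HM_1$. You would need to replace your parenthetical with this (or an equivalent) argument.

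The reductive case in your plan is closer to the paper's but takes a detour: the paper factors $\bSS=\bSS_0\cdot\TT_0$ (semisimple times central torus) and distinguishes by whether $\TT_0$ is compact (giving a clean split into the $K$-subgroup, $AM$, and $HM_1$ branches because $\bSS$ centralizes $\TT_0$), whereas you argue from the existence of a split torus \emph{inside} $S$ and want to conclude $S$ centralizes it — which is not automatic (e.g.\ $A\subset H$ but $H$ does not centralize $A$). Your escape hatch (``unless $S$ contains a conjugate of $\SO(2,1)$'') can be made to work but needs to be argued more carefully; centering the argument on $Z(\bSS)^\circ$ as the paper does avoids the issue.
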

\begin{proof}
Let 
$$
\bSS=\bSS_l\ltimes\bSS_u
$$
be the Levi decomposition of $\bSS$ with $\bSS_u$ unipotent and $\bSS_l$ 
reductive (the "Levi factor").

Suppose first that $\bSS_u$ is trivial
and $\bSS$ is reductive.  It is an almost direct product 
$$
\bSS=\bSS_0\times \TT_0,
$$
where $\bSS_0$ is semisimple and $\TT_0$ is a torus. If $\TT_0$ isn't compact
it contains a conjugate of $A$ and then up to conjugacy $\bSS \subset AM$.
Otherwise it $\TT_0$ is compact (and hence contained in a conjugate of $K$).
Unless $\bSS_0$ is non trivial we are done, so assume it is not, hence
of dimension $3$ ($\dim HM=6$ and there are no semisimple Lie algebras
of dimensions $1,2,4,5$).  $\bSS_0$ is then either isogenous to $\SO(1,2)$
or to $\SO(3)$.  In the first case $\bSS_0\subset HM$ is conjugate to $H$,
so up to conjugacy $\TT_0$ is contained in $M_1$ and $\bSS\subset HM_1$.
In the latter case $\bSS$ is compact hence conjugate to a subgroup of $K$.

Now consider the case where $\bSS$ is not reductive. By
\cref{lem:stable-unipotent} its unipotent radical is one dimensional;
without loss of generality it is equal to $U$.
The normalizer of $U$ in $G$ is $M_1AN$ and so $S\subset M_1AN$.
If $m_1an$ is any element of $S$ where $m_1\in M_1,a\in A,n\in N$ then since
$S$ is algebraic we have that both the semisimple part $m_1a$, and the
unipotent part $n$ belong to $S$. Thus $n\in U$ and we conclude that
$S\subset M_1AU\subset M_1H$, finishing the proof of the lemma.
\end{proof}

The next step is showing that each of the groups specified in
\cref{real-stabilizer} is small, and we begin with the first possibility.

Let $\HH$ denote Hamilton's quaternions with group of units
$\HH^1 \isom \SU(2)$. Then $H^1 \times H^1$ acts on $H\isom \R^4$ by left-
and right-multiplication preserving the quaternion norm, giving a
representation $\SU(2)\times \SU(2) \to \SO(4)$ which turns out to be
a double cover, giving an isomorphism of Lie algebras
\begin{equation}\label{classic}
\so(3)\oplus\so(3)\isom\so(4).
\end{equation} 
Connected subgroups of $\SO(4)$ can then be classified via subalgebras
of $\so(3)\oplus\so(3)$, recalling (as already used above) that the only
proper subalgebras of $\so(3)$ are the one-dimensional subspaces, which
are all conjugate to each other. 
 
\begin{lem}\label{so(4)subgroups1}
Let $\liet\subset \so(3)$ be a maximal torus.  Then
Every non-commutative subalgebra of $\so(3)\oplus\so(3)$
conjugate in $\SO(4)$ to either a product $\frakl_1\oplus\frakl_2$ where
$\frakl_1,\frakl_2\in\{\so(3),\liet,0\}$, or to 
$\diag(\so(3)\oplus\so(3))$. 
\end{lem}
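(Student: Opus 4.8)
The plan is a Goursat-type classification of the subalgebras of $\mathfrak{g}=\so(3)_1\oplus\so(3)_2$ (writing $\so(3)_i$ for the $i$-th summand and $\pi_i\colon\mathfrak{g}\to\so(3)$ for the two projections), resting on two inputs. First, the only subalgebras of $\so(3)$ are $0$, the one-dimensional subspaces — all conjugate to a fixed maximal torus $\liet$ — and $\so(3)$ itself (already recorded above); in particular each $\pi_i(\mathfrak{s})$ is one of these. Second, via the isomorphism \eqref{classic} and the surjection $\SU(2)\times\SU(2)\to\SO(4)$, conjugation by $\SO(4)$ acts on $\mathfrak{g}$ through $\Ad\colon\SU(2)\times\SU(2)\to\Aut(\so(3))\times\Aut(\so(3))$, which is surjective onto each factor. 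Thus we may conjugate the two factors independently by arbitrary automorphisms of $\so(3)$, and in particular move any line in $\so(3)_i$ to $\liet$ without disturbing the other factor.

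Now let $\mathfrak{s}\subset\mathfrak{g}$ be a non-commutative subalgebra. If $\pi_1(\mathfrak{s})$ and $\pi_2(\mathfrak{s})$ were both at most one-dimensional, then $\mathfrak{s}$ would lie inside $\pi_1(\mathfrak{s})\oplus\pi_2(\mathfrak{s})$, a product of two abelian lines, and so $\mathfrak{s}$ would be abelian; hence at least one of the projections is onto $\so(3)$. The two possibilities are symmetric, so I treat the case $\pi_1(\mathfrak{s})=\so(3)$ (the case $\pi_2(\mathfrak{s})=\so(3)$ yields the mirror-image conclusions $0\oplus\so(3)$ and $\liet\oplus\so(3)$). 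Set $\mathfrak{a}=\{X\in\so(3):(X,0)\in\mathfrak{s}\}$; a one-line bracket computation shows that $\mathfrak{a}$ is stable under $[\,\cdot\,,\pi_1(\mathfrak{s})]=[\,\cdot\,,\so(3)]$, so $\mathfrak{a}$ is an ideal of $\so(3)$ and is therefore $0$ or all of $\so(3)$.

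If $\mathfrak{a}=\so(3)$ then $\so(3)_1\subset\mathfrak{s}$, whence for every $(X,Y)\in\mathfrak{s}$ also $(0,Y)=(X,Y)-(X,0)\in\mathfrak{s}$, so $\mathfrak{s}=\so(3)_1\oplus\pi_2(\mathfrak{s})$; conjugating the second factor, this is one of $\so(3)\oplus 0$, $\so(3)\oplus\liet$, $\so(3)\oplus\so(3)$. If instead $\mathfrak{a}=0$, then $\pi_1|_{\mathfrak{s}}$ is injective with image $\so(3)$, hence an isomorphism of $\mathfrak{s}$ onto $\so(3)$; since $\mathfrak{s}\cong\so(3)$ is simple, the homomorphism $\pi_2|_{\mathfrak{s}}$ is either $0$ — which would force $\mathfrak{s}=\so(3)_1$, contradicting $\mathfrak{a}=0$ — or injective, so $\mathfrak{s}$ is the graph $\{(X,\psi(X)):X\in\so(3)\}$ of the automorphism $\psi=\pi_2\circ(\pi_1|_{\mathfrak{s}})^{-1}$. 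Because $\so(3)$ is compact and simple, $\Aut(\so(3))=\Inn(\so(3))=\Ad(\SO(3))$, so $\psi=\Ad(g)$ for some $g\in\SO(3)$; conjugating the second factor by a lift of $g^{-1}$ — permitted by the second input — carries $\mathfrak{s}$ onto $\diag(\so(3)\oplus\so(3))$. Collecting all the cases and discarding the abelian subalgebras ($0$, $\liet$, $\liet\oplus\liet$, and the one-dimensional graphs) leaves precisely the list in the statement.

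I do not foresee a real obstacle: the argument is a routine Goursat analysis together with the fact $\Aut(\so(3))=\Inn(\so(3))$. The one point needing care is the second input — $\SO(4)$-conjugation, although it cannot interchange the two $\so(3)$ summands, does realize the full automorphism group of each of them separately — since it is exactly this that makes every graph of an automorphism $\SO(4)$-conjugate to the fixed diagonal and that lets us normalize stray lines to $\liet$.
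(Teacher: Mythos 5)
Your proof is correct and follows essentially the same Goursat-type analysis as the paper, classifying subalgebras of $\so(3)\oplus\so(3)$ via their projections to the two factors and the corresponding kernel ideals, and then using $\Aut(\so(3))=\Inn(\so(3))$ to normalize the graph case to the diagonal. You make a couple of steps more explicit than the paper does — notably that $\SO(4)$-conjugation realizes all of $\Aut(\so(3))\times\Aut(\so(3))$ via the double cover $\SU(2)\times\SU(2)\to\SO(4)$, and that non-commutativity forces at least one projection to be onto — but the route is the same.
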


\begin{proof}
Suppose $\frakl$ is a subalgebra of $\so(3)\oplus \so(3)$. Let $\pi_L$ and $\pi_R$ denote the projection maps to the left and right copies of $\so(3)$ and let $\frakl_L$ and $\frakl_R$ denote the images of $\frakl$ under them. Let $I_L$ be the ideal of $\frakl_L$ of all the elements $X\in\frakl_L$ so that $(X,0)\in\frakl$ and similarly define $I_R$. Then $\frakl_L/I_L\isom\frakl_R/I_R$ by the map $\phi: X\mapsto Y$ if $(X,Y)\in \frakl$. It is easy to see that $\frakl$ is determined uniquely by $\frakl_L,\frakl_R,I_L,I_R,\phi$. If $\frakl_L=I_L$ then $\frakl_R=I_R$ and $\frakl=\frakl_L\oplus\frakl_R$. If $\frakl_L=\so(3)$ and $I_L=0$ then $\frakl_R=\so(3), I_R=0$ and $\phi$ is a conjugation by some element in $\SO(3)$ which without loss of generality we may assume is the identity element and we get the diagonal copy of $\so(3)$. Otherwise after conjugation we have $\frakl_L=\frakl_R=\frakh$ and $I_L=I_R=0$ so that $\frakl$ is the one dimensional subspace of $\liet\oplus\liet$ given by $\frakl=\{(X,\phi(X))\mid X\in\liet\}$ where $\phi$ is an isomorphism of $\liet$. 
\end{proof}

\begin{lem}\label{middle-lem}
Any of the groups appearing in the three items of \cref{real-stabilizer}
is conjugate in $G$ to a subgroup of the group of real points of a small
reductive subgroup of $\GG$ (defined over some real number field). 
\end{lem}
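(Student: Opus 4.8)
The plan is to dispose of the three cases of \cref{real-stabilizer} one at a time, reducing each to a smallness computation in the root system of $\GG$ of the kind already carried out in \cref{smallnesslem1}.

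Case (2) requires nothing new: $AM=\HH(\R)$ for the small reductive $\Q$-subgroup $\HH\subset\GG$ exhibited in the proof of \cref{smallnesslem1}. For case (3) I would first note that $H$ and $M_1$ are the special isometry groups of the two summands of the \emph{rational} orthogonal decomposition $\langle x_0,x_1,x_4\rangle\perp\langle x_2,x_3\rangle$ of the ambient quadratic space, so they commute and $HM_1$ is contained in $\bLL(\R)$ for a reductive $\Q$-subgroup $\bLL\subset\GG$ with $\bLL(\R)\isom\SO(2,1)\times\SO(2)$. It then remains to check that $\bLL$ is small, which I would do exactly as for $AM$ and $K_1$: in the $\C$-realization of \cref{smallnesslem1} the maximal torus of $\bLL$ is the full torus $\TT$, the positive root of the $\SO(2,1)$-factor is the short root $e_1$ (so $\rho_\bLL=\tfrac12 e_1$ and $W_\bLL=\{1,s_{e_1}\}$), while $\rho_\GG=\tfrac32 e_1+\tfrac12 e_2$. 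Writing $X=(X_1,X_2)$ one finds $\norm{X}_\bLL=\tfrac12\abs{X_1}$ and $\norm{X}_\GG=\tfrac32\max(\abs{X_1},\abs{X_2})+\tfrac12\min(\abs{X_1},\abs{X_2})\ge\tfrac32\abs{X_1}$, hence $\norm{X}_\bLL<\tfrac12\norm{X}_\GG$ for every nonzero $X$; since the maximal tori of $\bLL$ and $\GG$ coincide, this is exactly the relevant clause of \cref{defn:small} (temperedness being automatic from it).

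For case (1) I would prove the slightly stronger statement that every proper connected subgroup $S$ of $K\isom\SO(4)$ is conjugate \emph{in $G$} either into $K_1$ or into $AM$. Using the isomorphism $\liek\isom\so(3)\oplus\so(3)$: if $\Lie S$ is abelian it is toral (as $\liek$ is compact), hence $K$-conjugate into a maximal torus of $K$, whose Lie algebra lies inside $\so(3)\oplus\liet=\Lie K_1$ for an appropriate choice of $K_1$; if $\Lie S$ is non-abelian then by \cref{so(4)subgroups1} it is $K$-conjugate to some $\frakl_1\oplus\frakl_2$ with $\frakl_i\in\{\so(3),\liet,0\}$ and not both equal to $\so(3)$ (so again contained in $\so(3)\oplus\liet=\Lie K_1$), or to $\diag(\so(3)\oplus\so(3))$. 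In the first two situations $S$ is $K$-conjugate, a fortiori $G$-conjugate, into $K_1$. In the last situation the corresponding connected subgroup is, up to $K$-conjugacy, $M$ itself; here it is essential that this lands in $AM$ rather than in $K_1$, and this is forced because $M$ fixes a $2$-dimensional subspace of the ambient quadratic space (of signature $(1,1)$), whereas the semisimple factor of $K_1$ (a copy of $\SU(2)$) fixes only a $1$-dimensional subspace and acts irreducibly on its orthogonal complement --- so, the dimension of the fixed subspace being a conjugacy invariant, these two subgroups are not $G$-conjugate. Hence $S$ is $G$-conjugate into $AM$ in this case. Finally, the two $K$-conjugacy classes of connected subgroups isogenous to $\SO(3)\times\SO(2)$ are, by the proof of \cref{smallnesslem1}, conjugate in $G$ to one and the same $\Q$-group $\HH'$ with $\HH'(\R)\isom K_1$; so in every case $S$ is conjugate in $G$ to a subgroup of $\HH(\R)$ or $\HH'(\R)$, both of which are groups of real points of small reductive $\Q$-subgroups of $\GG$.

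The only genuinely new ingredient is the smallness of $\bLL$ (equivalently of $HM_1$), and this is a short computation in the $B_2$ root system entirely parallel to the ones already done for $AM$ and $K_1$. The step requiring the most care is the bookkeeping in case (1): keeping track of which connected subgroups of $\SO(4)$ become conjugate \emph{in $G$} --- rather than only in $K$ --- into $K_1$ as opposed to $AM$, where the dimension of the fixed subspace on the ambient quadratic space is exactly the invariant that separates $M$ from the subgroups sitting inside $K_1$.
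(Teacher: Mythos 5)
Your proof is correct and follows essentially the same strategy as the paper's: all three cases are reduced to the root-system smallness criterion, with \cref{smallnesslem1} supplying the base computations and \cref{so(4)subgroups1} handling the structure of subalgebras of $\so(4)$ in case (1).

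The one genuine divergence is your treatment of $HM_1$. You compute its root data directly in $\TT$, finding the short positive root $e_1$, so that $\rho_{\bLL}=\tfrac12 e_1$ and $\norm{X}_{\bLL}=\tfrac12\abs{X_1}$, whence smallness. The paper instead asserts that $HM_1$ and $K_1$ have the same complexification up to conjugacy and cites the $K_1$ computation from \cref{smallnesslem1}. That assertion does not appear to be accurate: $K_1(\C)$ is the long-root Levi with $\rho_{K_1}=\tfrac12(e_1+e_2)$, while your computation shows $HM_1(\C)$ is a short-root Levi, conjugate in $\GG(\C)$ to $AM(\C)$ rather than to $K_1(\C)$. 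The slip is harmless for the paper, since both $AM$ and $K_1$ were shown small, but your direct computation sidesteps the issue and is arguably cleaner. (A minor point of precision: the clause of \cref{defn:small} needs $\max_{w\in W_\GG}\norm{w.X}_{\bLL}<\tfrac12\norm{X}_\GG$, not just $\norm{X}_{\bLL}<\tfrac12\norm{X}_\GG$; but since $\norm{\cdot}_\GG$ is $W_\GG$-invariant and your inequality holds for all nonzero $X$, it does imply the stated clause.)

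In case (1), the aside showing that $M$ is \emph{not} $G$-conjugate into $K_1$ (via the dimension of the fixed subspace) is correct but unnecessary. For the lemma one only needs $M$ to lie inside \emph{some} small group, and $M\subset AM$ already does the job, independently of whether $M$ could also be pushed into $K_1$.
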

\begin{proof}
The groups $HM_1$ and $AM$ are clearly rational and reductive, as are the
closed subgroups of $K$.  A closed connected subgroup of $K$ which is not
a torus is either the diagonal $\so(3)$ (in terms of the \cref{so(4)subgroups1}),
in other words $M$, or is contained in the subgroup $K_1$ with Lie algebra
$\so(3)\oplus\so(2)$.  Since $HM_1$ and $K_1$ have the same complexification
(up to conjugacy) all three cases are covered by \cref{smallnesslem1}.
\end{proof}

\begin{lem}\label{prop:small-stabilizers} 
Let $S\subset\GG(\Qbar\cap\R)$ be a subgroup contained in some conjugate
of $HM$.  Then $S$ is virtually small (see \cref{def:absmall}).
\end{lem}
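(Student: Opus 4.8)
The plan is to pass to the Zariski closure of $S$, reduce to its identity component, identify that component up to conjugacy via \cref{real-stabilizer} and \cref{middle-lem}, and finally descend the conjugating element to $\Qbar\cap\R$. First I would let $\bSS$ be the Zariski closure of $S$ in $\GG$. Since $S\subset\GG(\Qbar\cap\R)$ and $\Qbar\cap\R$ has characteristic $0$ (hence is perfect), $\bSS$ and its identity component $\bSS^{\circ}$ are defined over $\Qbar\cap\R$. As $\bSS/\bSS^{\circ}$ is finite, $S_{1}:=S\cap\bSS^{\circ}$ has finite index in $S$ and is Zariski dense in the connected group $\bSS^{\circ}$; if $\bSS^{\circ}$ is trivial then $S$ is finite and trivially virtually small, so assume otherwise. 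By hypothesis $S_{1}\subset g_{0}HMg_{0}^{-1}$ for some $g_{0}\in G$, so $\bSS^{\circ}\subset g_{0}\,\overline{HM}\,g_{0}^{-1}$, where $\overline{HM}$ denotes the Zariski closure of $HM$; equivalently the connected $\R$-group $\bR:=g_{0}^{-1}\bSS^{\circ}g_{0}$ is contained in $\overline{HM}$, with its Zariski-dense subgroup $g_{0}^{-1}S_{1}g_{0}$ lying in the subset $HM$ itself.

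\textbf{Applying \cref{real-stabilizer} and \cref{middle-lem}.} The hypothesis ``$\bR(\R)\subset HM$'' of \cref{real-stabilizer} is now met: the proof of that lemma only uses that $\bR$ lies in $\overline{HM}$ (it argues through the tangent space at the identity and dimension counts inside the non-group variety $\overline{HM}$); alternatively, since $M$ is compact and $H$ is closed the multiplication map $H\times M\to G$ is a proper injective immersion, so $HM$ is a closed embedded $6$-submanifold, hence a union of connected components of the $6$-dimensional real variety $\overline{HM}(\R)$, and the identity component of $\bR(\R)$ — which is Zariski dense in $\bR$ — therefore sits in $HM$. Either way, \cref{real-stabilizer} shows $\bR(\R)$ (equivalently $\bSS^{\circ}(\R)$) is conjugate in $G$ to a subgroup of a proper subgroup of $K$, of $AM$, or of $HM_{1}$, and \cref{middle-lem} then produces $g_{1}\in\GG(\R)$ with $g_{1}\bSS^{\circ}(\R)g_{1}^{-1}\subset\HH'(\R)$ for some small reductive subgroup $\HH'\subset\GG$ defined over a real number field $E_{0}$. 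By Zariski density of $\bSS^{\circ}(\R)$ in $\bSS^{\circ}$ this upgrades to the containment of algebraic groups $g_{1}\bSS^{\circ}g_{1}^{-1}\subset\HH'$.

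\textbf{Descending the conjugator.} Finally I would push $g_{1}$ down to $\Qbar\cap\R$. The statement ``there exists $g\in\GG$ with $g\bSS^{\circ}g^{-1}\subset\HH'$'' is a first-order formula whose parameters are the defining polynomials of $\bSS^{\circ}$ and $\HH'$, all with coefficients in $\Qbar\cap\R$; it holds over $\R$, so — exactly as in the proof of \cref{zlem2}, using model-completeness of the theory of real closed fields together with $\Qbar\cap\R\preceq\R$ — it holds over $\Qbar\cap\R$. Thus there is $g_{1}\in\GG(\Qbar\cap\R)$ with $g_{1}\bSS^{\circ}g_{1}^{-1}\subset\HH'$, whence $g_{1}S_{1}g_{1}^{-1}\subset\HH'(\R)$ and so $S_{1}\subset\HH(\R)$ for $\HH:=g_{1}^{-1}\HH'g_{1}$. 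This $\HH$ is a conjugate of a small group, hence itself small, and is defined over the real number field generated by $E_{0}$ and the entries of $g_{1}$. Therefore $S_{1}$ is small and $S$ is virtually small.

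\textbf{Main obstacle.} I expect the delicate point to be the middle step: correctly matching the hypothesis of \cref{real-stabilizer} with what is available, given that $HM$ is only a subset of $G$ and not a subgroup, so one must argue through its Zariski closure (or through its connected components). The one piece of bookkeeping to watch is that $H\cong\SO(1,2)$ has two real connected components, so $HM$ and the various $\bSS^{\circ}(\R)$ need not be connected. Everything else should be routine: the reduction to $\bSS^{\circ}$, the black-box inputs \cref{real-stabilizer} and \cref{middle-lem}, and the real-closed-field descent.
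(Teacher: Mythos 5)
The proposal is correct and follows essentially the same route as the paper: pass to the identity component of the Zariski closure (defined over $\Qbar\cap\R$), apply \cref{real-stabilizer} and \cref{middle-lem} to conjugate into a small subgroup defined over a real number field, and then descend the conjugating element to $\Qbar\cap\R$ via model-completeness of real closed fields. Your extra discussion of why the Zariski-closure-vs.-$HM$ issue is harmless is a subtlety the paper's proof elides, but the overall argument is the same.
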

\begin{proof}
Let $\bSS$ be the connected component of the Zariski closure of $S$.
Then $\bSS(\R)$ is contained in a conjugate of $HM$, so by
\cref{real-stabilizer} there is a subgroup $\BB \subset \GG_E$ for
some real field $E$ so that $\BB(\R)$ contains a conjugate of $\bSS(\R)$.
Since the embedding of $\Qbar\cap\R$ in $\R$ is elementary, it now follows
that $\BB$ contains a $\GG(\Qbar\cap\R$-conjugate of $\bSS$.
By \cref{middle-lem} $\BB$ is small.  We thus have
$S\cap\bSS(R)\subset g\BB(\R)g^{-1}$ for some $g\in\GG(\Qbar\cap\R)$ 
and this makes $S$ virtually small since the Zariski closure has finitely
many connected components. 
\end{proof}

The following is equivalent to condition \ref{cond3}.

\begin{thm}\label{cond3dim4}
Let $X$ and $\mu$ be as in Theorem \ref{mainthm2}.
Then the set of points $x=\Gamma g\in X$ ($g\in G$) so that
$\GQ\cap gMg^{-1}$ is infinite is $\mu$-null. 
\end{thm}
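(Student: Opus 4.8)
The plan is to combine \cref{zlem1} with \cref{machine}, using \cref{prop:small-stabilizers} to verify the hypothesis of the latter.

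First I would reduce to a countable family. The condition defining the exceptional set is invariant under $g\mapsto\gamma g$ for $\gamma\in\Gamma$, since $\GQ\cap(\gamma g)M(\gamma g)^{-1}=\gamma(\GQ\cap gMg^{-1})\gamma^{-1}$ has the same cardinality; so the exceptional set is $\pi$ of $\{g\in G:\GQ\cap gMg^{-1}\text{ infinite}\}$. If $g$ lies in this set then, since $\Q\subseteq\Qbar\cap\R$, the group $\GG(\Qbar\cap\R)\cap gMg^{-1}$ is also infinite, so by \cref{zlem1} there is $g'\in\GG(\Qbar\cap\R)$ with $g\in g'HM$. Hence the exceptional set is contained in $\bigcup_{g'}\pi(g'HM)$, where $g'$ ranges over the \emph{countable} set $\GG(\Qbar\cap\R)$, and it suffices to prove $\mu(g'HM)=0$ (in the convention of \cref{sec:nonconcentration}, $\mu(\pi(g'HM))=0$) for each fixed $g'$. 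This will follow from \cref{machine} once we check that $g'HM$ is a \emph{small} real subvariety of $G$.

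The set $g'HM$ is constructible (the image of the multiplication morphism $H\times M\to G$, translated by $g'$) and is a single orbit of $H\times M$ acting on $G$ by left and right translation, hence Zariski locally closed; it is also closed in the analytic topology because $M$ is compact (if $h_im_i\to g$, pass to a subsequence with $m_i\to m\in M$, so $h_i\to gm^{-1}\in H$ and $g\in HM$), and a locally closed set that is closed is closed. Thus $g'HM$ is a Zariski-closed real subvariety, defined over $\Qbar\cap\R$, of dimension at most $\dim H+\dim M=6<\dim G$. To see it is small I would bound $\Stab_{\GQ}(L')$ for every real subvariety $L'\subseteq g'HM$. Passing to irreducible components — a finite-index subgroup of $\Stab_{\GQ}(L')$ preserves each of the finitely many components, and a subgroup of a virtually small group is virtually small — we may assume $L'$ irreducible. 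Fix a point $\ell_0\in L'$ and write $\ell_0=g'h_0m_0$ with $h_0\in H$, $m_0\in M$. For any $s\in\Stab_G(L')$ we have $s\ell_0\in L'\subseteq g'HM$, so $\Stab_G(L')\subseteq g'HM\,\ell_0^{-1}=g'HMh_0^{-1}(g')^{-1}$ (using $Mm_0^{-1}=M$); and since $g'hmh_0^{-1}(g')^{-1}=(g'h_0)\bigl(h_0^{-1}h\,m\bigr)(g'h_0)^{-1}$ with $h_0^{-1}h\in H$, this set equals the conjugate $(g'h_0)(HM)(g'h_0)^{-1}$. Therefore $\Stab_{\GQ}(L')$ is a subgroup of $\GG(\Qbar\cap\R)$ contained in a conjugate of $HM$, hence virtually small by \cref{prop:small-stabilizers}. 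So $g'HM$ is small, \cref{machine} gives $\mu(g'HM)=0$, and summing over the countably many $g'$ completes the proof.

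The step requiring the most care is the stabilizer computation: a naive bound only gives $\Stab_G(L')\subseteq(g'HM)(g'HM)^{-1}$, which is as large as a conjugate of $HMH$ and is \emph{not} small; the improvement uses crucially that $\Stab_G(L')$ preserves $L'$, so that one fixed base point $\ell_0$ may be used for all $s$. One should also be careful that $g'HM$ is genuinely a Zariski-closed real subvariety — this is exactly where compactness of $M$ enters — so that \cref{machine} applies and so that the chosen base point of an arbitrary subvariety $L'$ really lies in $g'HM$ rather than merely in its closure.
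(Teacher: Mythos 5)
Your proposal is correct and follows essentially the same route as the paper: reduce via \cref{zlem1} to the countable family of sets $g'HM$ with $g'\in\GG(\Qbar\cap\R)$, verify smallness of each by fixing a base point $\ell_0\in L'$ and using $s\ell_0\in L'\subseteq g'HM$ to trap the stabilizer in a single conjugate of $HM$, then invoke \cref{prop:small-stabilizers} and \cref{machine}. The only (cosmetic) difference is that the paper first normalizes to $g'=1$ by conjugation before doing the stabilizer computation, whereas you carry $g'$ through explicitly; you also spell out the verification that $g'HM$ is a genuine real subvariety, which the paper leaves implicit.
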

\begin{proof}
Let $L=gHM$ with $g\in \GG(\Qbar\cap\R$).  We make the following claim:
for any real subvariety $L'\subset L$,
its \emph{algebraic} stabilizer $\{ g\in \GG(\Qbar\cap\R) \vert gL'=L'\}$
is virtually small.  It will follow that the rational stabilizers are
virtually small and hence that $L$ is small.  \cref{machine} would then
show that $\mu(L)=0$ and by \cref{zlem1} these $L$ cover the set under
consideration.

Next observe that replacing $L$ by $HM$ would merely conjugate the algebraic
stabilizers by $g$ so we may assume $g=1$.

Now let $L'\subset L=HM$ be a real subvariety with algebraic stabilizer $S$.
Fix $h\in H$ and $m\in M$ such that $hm\in L'$.  Then for any $s\in S$
we have $shm\in L'\subset L=HM$, so 
$$
s\in HMh^{-1}=hHM h^{-1}. 
$$ 
Thus $S\subset\GG(\Qbar\cap\R)\cap hHMh^{-1}$ and is virtually small
by \cref{prop:small-stabilizers}.
\end{proof}

\section{Proof of \cref{mainthm2}}\label{last}
In the previous sections we verified the hypotheses of
\cref{thm:EL-LowEntropy} as applied to a limit measure $\mu$, obtaining
the conclusion that it is a linear combination of homogeneous measures
supported on closed orbits of closed subgroups.  In this section we will
show that a single conjugacy class of subgroups $H$ furnishes the stabilizers
of those measures (aside from the $G$-invariant measures),
obtaining \cref{mainthm2}.

\begin{lem}\label{lem:ELproperties}
Let $H<G$ be an algebraic subgroup of $G$ such that
\begin{itemize}
\item $H$ contains $A$.
\item $H$ contains a finite index subgroup $L$ of a semisimple algebraic
subgroup $L'$ of $G$ of real rank $1$. 
\item There exists $x\in G$ such that $xHx^{-1}\cap \Gamma$ is a
lattice in $xHx^{-1}$. 
\end{itemize}
 Then 
\begin{enumerate}
\item If $x\in G$ is such that $\Gamma\cap xHx^{-1}$ is a lattice in
$xHx^{-1}$ then $xHx^{-1}$ is defined over $\Q$.
\item $H$ is conjugate in $G$ to a subgroup $\tilde{H}$ of $G$ defined over
$\Qbar\cap\R$ containing $A$. 
\item $xH \subset y\tilde{H}M$ for some $y\in\GG(\Qbar\cap\R)$.
\end{enumerate}
\end{lem}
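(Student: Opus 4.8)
The plan is to prove the three assertions in order; (1) and (2) are general rationality statements, while (3) is a direct manipulation of cosets. For (1), write $W=xHx^{-1}$, so that $\Lambda:=\Gamma\cap W$ is a lattice in $W$ by hypothesis. Since $\Lambda\subset\Gamma\subset\GG(\Q)$ and $\Q$ is perfect, the Zariski closure $\mathbf W$ of $\Lambda$ is a $\Q$-subgroup of $\GG$; it then suffices to see that $\Lambda$ is Zariski dense in $W$, for then $W=\mathbf W(\R)$ is defined over $\Q$. Zariski density follows from Borel's density theorem once one uses the hypotheses on $H$ — namely $H\supset A$ together with the semisimple real-rank-one subgroup $L'$ — to check that $W$ is, up to finite index and up to absorbing its $\R$-anisotropic (hence, in $\SO(1,4)$, compact) part, semisimple with isotropic factors, so that the density hypotheses are met.

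For (2), by (1) we may regard $W=xHx^{-1}$ as the group of real points of a $\Q$-group $\mathbf W$. Both $A$ and $xAx^{-1}$ are maximal $\R$-split tori of $G$ (which has real rank one), and $xAx^{-1}\subset W$; hence the first-order sentence over $\Q$ asserting the existence of $g\in\GG$ with $A\subset g\mathbf Wg^{-1}$ holds over $\R$, witnessed by an element of $G$ conjugating $xAx^{-1}$ to $A$. Since $\Qbar\cap\R$ is an elementary substructure of $\R$ (real closed fields are model-complete; cf.\ the proof of \cref{zlem2}), this sentence holds over $\Qbar\cap\R$, yielding $g_1\in\GG(\Qbar\cap\R)$ with $A\subset g_1Wg_1^{-1}$. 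Then $\tilde H:=g_1Wg_1^{-1}=(g_1x)H(g_1x)^{-1}$ contains $A$, is defined over $\Qbar\cap\R$, and is conjugate to $H$ in $G$, which is (2).

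For (3), put $g:=g_1x\in G$, so $\tilde H=gHg^{-1}$ and $x=g_1^{-1}g$, hence $xH=g_1^{-1}\tilde Hg$; with $y:=g_1^{-1}\in\GG(\Qbar\cap\R)$ it remains only to prove $g\in\tilde HM$, since then $xH=y\tilde Hg\subset y\tilde HM$. Because $A\subset H$ we have $gAg^{-1}\subset\tilde H$, and since $gAg^{-1}$ and $A$ are both maximal $\R$-split tori of $\tilde H$ there is $\tilde h\in\tilde H$ with $\tilde hg\in N_G(A)$; replacing $g$ by $\tilde hg$ (which changes neither the coset $\tilde Hg$ nor the relation $gHg^{-1}=\tilde H$) we may assume $g\in N_G(A)$. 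Now $Z_G(A)=AM$ and the relative Weyl group of $G$ with respect to $A$ has order two, so $N_G(A)=AM\sqcup AMw$. If $g\in AM$, then $g=am$ with $a\in A\subset\tilde H$ and $m\in M$, so $g\in\tilde HM$.

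Finally, to treat $g\in AMw$ I would exhibit an element $w'\in H$ acting on $A$ by inversion, so that $w'\in AMw$: such a $w'$ is obtained from $L'$, since a maximal $\R$-split torus $A_0$ of $L'$ lies in $L\subset H$ and is (like $A$) maximal $\R$-split in $G$, hence in $H$, so $A_0$ and $A$ are $H$-conjugate, and conjugating into $A$ a Weyl representative of $L'$ (which can be chosen in the identity component of $L'$, hence in $L\subset H$) gives the required $w'\in H$. Then $gw'$ again conjugates $H$ to $\tilde H$ (as $w'\in H$), lies in $AM$ (because $AMw\cdot AMw=AM$), and satisfies $\tilde H(gw')=\tilde Hg$ (because $gw'g^{-1}\in gHg^{-1}=\tilde H$), reducing us to the case already done; so $g\in\tilde HM$ and (3) follows. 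I expect the main obstacle to be exactly this coset surgery: the appearance of $M$ forces one to control all $g\in G$ conjugating $H$ onto $\tilde H$, and what makes this tractable is that the Weyl reflection of $A$ is realized inside $H$ itself via $L'$. A secondary delicate point is the Zariski-density input in (1) — ensuring the arithmetic lattice $\Gamma\cap xHx^{-1}$ is not confined to a proper subgroup of $xHx^{-1}$, which is again where the structural constraints $H\supset A$ and $H\supset L$ are used.
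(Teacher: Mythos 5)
Your treatment of items (2) and (3) is sound and closely parallels the paper's. For (2) you argue via model-completeness of real closed fields (asserting the existence of a conjugator as a first-order sentence and passing from $\R$ to $\Qbar\cap\R$), whereas the paper invokes conjugacy of maximal $(\Qbar\cap\R)$-split tori of $\GG$ by elements of $\GG(\Qbar\cap\R)$; these are essentially two packagings of the same fact. For (3) you carefully spell out the Weyl-element step that the paper compresses into the equality $\tilde H N_G(A)=\tilde H M$: your observation that $\tilde H$ (or $L'$, after conjugating a split torus) contains an element inverting $A$, so that the nontrivial Weyl representative of $G$ lies in $\tilde H M$, is exactly what makes that equality true.

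The gap is in (1). You reduce to showing $\Lambda=\Gamma\cap W$ is Zariski dense in $W=xHx^{-1}$ and assert this "follows from Borel's density theorem ... up to absorbing its $\R$-anisotropic part." But Borel density gives Zariski density of a lattice only in a semisimple group without compact factors; when $W$ has a compact factor $C$ the statement fails — already $\Lambda_0\times\{e\}$ is a lattice in $H_0\times C$ that is not Zariski dense. And such a compact factor genuinely can occur here: if the semisimple noncompact part $H_0$ of $W$ is (a conjugate of) $\SO(1,2)$ inside $\SO(1,4)$, its centralizer is $\SO(2)$, and $W$ may contain (part of) it. So "absorbing the anisotropic part" is precisely the nontrivial step, and you have not supplied it. The paper closes this by decomposing $W$ as an almost direct product of a semisimple $H_0$ without compact factors and a compact group $C$, applying Borel density only to $\Gamma\cap xH_0x^{-1}$ to conclude $xH_0x^{-1}$ is a $\Q$-group, and then arguing structurally that $C$ is contained in $Z_G(xH_0x^{-1})$, which (for $H_0\cong\SO(1,m)$, $2\le m\le4$, embedded in $\SO(1,4)$) is either trivial or the one-dimensional complementary $\SO(4-m)$ — in either case determined, and defined over $\Q$, once $xH_0x^{-1}$ is. Without this centralizer analysis, (1) is not proved, and since (2) and (3) consume (1), the whole lemma is not yet established by your argument.
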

\begin{proof}
Fix $x\in G$ such that $\Gamma\cap xHx^{-1}$ is a lattice in $xHx^{-1}$.
Then $xHx^{-1}$ is unimodular (it contains a lattice), and hence,
since it contains $A$, reductive (else the uniformly expanding action of
$A$ on the unipotent radical would make the modular character of
$xHx^{-1}$ non trivial).

The center of $H$ must be contained in the centralizer $AM$ of $A\subset H$.
In fact, $AM$ is the centralizer of any $am\in AM$ where $a$ is non-trivial, so 
either $H\subset AM$ or the center of $H$ is contained in $M$.  The first
case is impossible since $H$ contains a semisimple subgroup of $G$ of real
rank $1$.  Since the center of $H$ is compact it is the almost direct product
of a semisimple group $H_0$ without compact factors and a compact group.
Then $\Gamma\cap xH_0x^{-1}$ is a lattice in $xH_0x^{-1}$
so by the Borel Density Theorem $xH_0x^{-1}$ is defined over $\Q$.

A real-rank $1$ semisimple subgroup of $G=\SO(1,4)$ containing $A$
is determined by the subgroup of $N$ it contains, and is hence
one of $\SO(1,m)$ for $2\leq m\leq 4$ (up conjugacy and finite index).
Their centralizers are (up to finite index) the complementary subgroups
$\SO(4-m)$.  When $m=2$ this is a one-dimensional group and the groups are 
trivial otherwise.  It follows that (up to finite index) $H$ is either
$H_0$ or its product with its centralizer, and hence that $xHx^{-1}$ is
defined over $\Q$ as well.

Since $xHx^{-1}$ has rank $1$ over $\RR$, the same is true over
$\Qbar\cap\R$.  Accordingly let $S\subset xHx^{-1}$ be a maximal
$\Qbar\cap\R$-split torus defined over $\Qbar\cap\R$.
By the conjugacy of such tori in $\GG$ there exists
$y\in\GG(\Qbar\cap\R)$ so that $yAy^{-1}=S\subset xHx^{-1}$.
Thus $A\subset y^{-1}xHx^{-1}y$ with $\tilde{H}=y^{-1}xHx^{-1}y$
defined over $\Qbar\cap\R$ as claimed.

Now both $y^{-1}xAx^{-1}z$ and $A$ are maximal Cartan subgroups of $H$ so
there is $h\in \tilde{H}$ for which $hAh^{-1} = y^{-1}xAx^{-1}y$ and hence
$y^{-1}x \in hN_G(A) \subset \tilde{H} N_G(A)$.  Multiplying by $y$ on the
left we conclude that 
$$x\in y \tilde{H} N_G(A) = y\tilde{H} M\,.$$
\end{proof}

\begin{proof}[Proof of \cref{mainthm2}.]
Let $\nu$ be an ergodic component of a limit measure $\mu$.
It is a homogeneous measure invariant by group of finite index in a
subgroup $H$ satisfying the hypotheses of \cref{lem:ELproperties}.
Thus $\mu$ is contained in the countable union of the submanifolds of the
form $\Gamma y\tilde{H}M$, where $y$ is
algebraic and $\tilde{H}$ is a rational subgroup of $G$.
Further, the non compact factor of each $\tilde{H}$ is isogenous to
$\SO(1,m)$ for some $2\leq m \leq 4$.  In the previous section we
showed that when $m=2$ the manifold $y\tilde{H}M$ is small hence $\mu$-null,
and it follows that all components of $\mu$ other than (possibly) the
Haar component are invariant by subgroups isogenous to conjugates
of $\SO(1,3)$ and lie on the countable union $\Gamma y\tilde{H}M$
where $\tilde{H}$ runs over the algebraic subgroups in that conjugacy class.
\end{proof}
 
\bibliographystyle{plain}
\bibliography{que,ergodic_theory,aut_forms,lie_gps}
\end{document}